\def\NAT@def@citea{\def\@citea{\NAT@separator}}
\theoremstyle{plain}
\newtheorem{theorem}{Theorem}[section]
\newtheorem{lemma}[theorem]{Lemma}
\newtheorem{corollary}[theorem]{Corollary}
\newtheorem{proposition}[theorem]{Proposition}
\newtheorem{definition}[theorem]{Definition}
\newtheorem{example}[theorem]{Example}
\newtheorem{remark}[theorem]{Remark}
\newcommand{\R}{{\mathcal{R}}}
\definecolor{lime}{HTML}{A6CE39}
\DeclareRobustCommand{\orcidicon}{%
	\begin{tikzpicture}
	\draw[lime, fill=lime] (0,0)
	circle [radius=0.16]
	node[white] {{\fontfamily{qag}\selectfont \tiny ID}};
	\draw[white, fill=white] (-0.0625,0.095)
	circle [radius=0.007];
	\end{tikzpicture}
	\hspace{-2mm}
}
\xdef\csname orcid\x\endcsname{\noexpand\href{https://orcid.org/\csname orcidauthor\x\endcsname}{\noexpand\orcidicon}}
\begin{document}


{\title{\large 
Characterizations  of weighted (b,c) inverse 
}}
\author{
\name{Bibekananda Sitha\textsuperscript{a}, Jajati Keshari Sahoo\textsuperscript{a}\orcidA\thanks{CONTACT Jajati Keshari Sahoo Email: jksahoo@goa.bits-pilani.ac.in}, Ratikanta Behera\textsuperscript{b}\orcidB}
\affil{\textsuperscript{a}Department of Mathematics,  BITS Pilani, K K Birla Goa Campus, Zuarinagar, Goa, India.
\textsuperscript{b} Department of Mathematics, University of Central Florida, Orlando, 32826, FL, USA.
}
}
\maketitle

\begin{abstract} 
The notion of weighted $(b,c)$-inverse of an element in rings were  introduced, very recently [Comm. Algebra, 48 (4) (2020): 1423-1438]. In this paper, we further elaborate on this theory by establishing a few characterizations  of this inverse and  their  relationships  with other $(v, w)$-weighted $(b,c)$-inverses. We introduce some necessary and sufficient conditions for the existence of the hybrid $(v, w)$-weighted $(b,c)$-inverse and  annihilator  $(v, w)$-weighted $(b,c)$-inverse of elements in rings. In addition to this, we explore a few sufficient conditions for the reverse-order law of the annihilator  $(v, w)$-weighted $(b,c)$-inverses. 
\end{abstract}

\begin{keywords}
Generalized inverse; weighted $(b,c)$-inverse; weighted hybrid $(b,c)$-inverse; weighted annihilator $(b,c)$-inverse, weighted Bott-Duffin $(e,f)$-inverse.
\end{keywords}

\begin{subjclass}
15A09; 16U90; 16W10.
\end{subjclass}

\section{Introduction}

\subsection{Background and motivation}

Let $\R$ be a ring with involution. The theory of generalized inverses has generated a tremendous amount of interest in many research areas in mathematics \cite{rakic,BenIsrael03,SahBe20,Grev1966}. Several types of generalized inverses are available in the literature, such as Moore-Penrose inverse\cite{koliha}, group inverse \cite{Hartwig1977},  Drazin inverse \cite{Drazin58}, core inverse\cite{rakic}. It is worth to mention, Drazin in \cite{Drazin2012} introduced $(b,c)$ inverse in the setting of a semigroup, which is a generalization of Moore–Penrose inverse. The concept of annihilator $(b, c)$- and hybrid $(b, c)$-inverse were established as generalizations of $(b, c)$-inverses in \cite{Drazin2012}. Further, several characterizations hybrid and annihilator $(b,c)$-inverse have been discussed in \cite{Zhu2018}.  Mary proposed the inverse along an element (see \cite{Mary11} Definition 4), as a new type of generalized inverse. Many researchers \cite{weakg,Drazin13,Drazin13left} were explored numerous properties of these inverses and interconnections with other generalized inverses. Among the extensive work of generalized inverses, there has been a growing interest with ``weighted'' generalized inverses \cite{mosic18,das,PredragKM17} in recent years for encompassing the above-mentioned generalized inverses.

In connection with the theory $(b, c)$-inverses (see \cite{Drazin2012}, Definition 1.3) and Bott-Duffin inverse \cite{BottDu53}, Drazin explored the Bott-Duffin $(e, f)$-inverse  (see \cite{Drazin2012}, Definition 3.2) in a semigroup. Further, very large class of ``$(v, w)$-weighted version'' of $(b, c)$-inverses are introduced in \cite{Drazin20Weighted} e.g., annihilator $(v,w)$-weighted $(b,c)$-inverses (see Definition 4.1) and and hybrid $(v,w)$-weighted $(b,c)$-inverses (see Definition 4.2).  The vast work on hybrid and annihilator $(b,c)$-inverse along with the above weighted $(b, c)$-inverse, motivate us to study a few characterizations and representations for hybrid and annihilator $(v, w)$-weighted $(b, c)$-inverse. 

More precisely, the main contributions of this paper are as follows:

\begin{enumerate}
    \item[\textnormal{$\bullet$}] A few necessary and sufficient conditions for the existence of the $(v, w)$-weighted $(b,c)$ inverses of elements in rings are introduced. 
    \item[\textnormal{$\bullet$}] Some characterizations of the  $(v,w)$-weighted hybrid $(b,c)$-inverse and annihilator $(v, w)$-weighted $(b, c)$-inverses are investigated.
   \item[\textnormal{$\bullet$}] The construction of $(v,w)$-weighted hybrid $(b,c)$-inverse via group inverse is presented. 
\end{enumerate}

\subsection{Outlines}
Our presentation is organized as follows. We present some notations and existing definitions in Section 2. In Section 3, we have discussed a few characterizations for the $(v,w)$-weighted $(b,c)$-inverse. Various equivalent characterizations of the hybrid $(v,w)$-weighted $(b,c)$-inverse are presented in Section 4. In Section 5, we study the representation of the annihilator $(v,w)$-weighted $(b,c)$-inverse.  The contribution of our work is summarized in Section 6.

\section{Preliminaries}
Throughout this paper, we consider $\R$ be an associative ring with unity 1. The sets of all left annihilators and right annihilators of $a$ is respectively defined by 
\begin{center}
 $lann(a)=\{x\in \R: xa=0\}$ and $rann(a)=\{z\in \R: az=0\}$. 
\end{center}

We denote the left and right ideals by $a\R = \{ar~:~r\in \R\}$ and $\R a = \{za~:~ z\in\R\}$. An element $y\in \R$ is called generalized or inner inverse of $a\in \R$ if $aya=y$. If such $y$ exist, we call $a$ is regular. The set of inner inverse of $a$ is denoted by $a\{1\}$ and the  elements are represented by $a^{-}$.  
The following result gives the relation between ideals and annihilators.

\begin{proposition}\label{prop2.1}\cite{Koliha03}
If $a$ is idempotent, then $rann(a)=(1-a)\R$ and $lann(a)=\R (1-a)$.
\end{proposition}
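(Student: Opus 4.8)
The plan is to establish each of the two set equalities by double inclusion, with the idempotent hypothesis $a^2 = a$ doing all the work. I treat $rann(a) = (1-a)\R$ first; the second equality $lann(a) = \R(1-a)$ then follows by an entirely dual argument that multiplies on the right rather than the left.

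For the inclusion $(1-a)\R \subseteq rann(a)$, I would take an arbitrary element $(1-a)r$ with $r \in \R$ and verify directly that it is annihilated by $a$ on the left: computing $a(1-a)r = (a - a^2)r$ and invoking $a^2 = a$ collapses this to $0$, so $(1-a)r \in rann(a)$. For the reverse inclusion $rann(a) \subseteq (1-a)\R$, I would start from an arbitrary $z$ with $az = 0$ and exhibit $z$ as a member of $(1-a)\R$. The key observation is the representation identity $(1-a)z = z - az$, which reduces to $z$ precisely because $az = 0$. Thus $z = (1-a)z \in (1-a)\R$, completing the first equality.

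The second equality $lann(a) = \R(1-a)$ is handled symmetrically: for $\R(1-a) \subseteq lann(a)$ I would compute $z(1-a)a = z(a - a^2) = 0$, and for the reverse inclusion I would use the right-handed version of the representation identity, namely that $xa = 0$ forces $x = x - xa = x(1-a) \in \R(1-a)$.

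There is no genuine obstacle here; the entire argument rests on the single algebraic fact that the idempotent relation $a^2 = a$ makes $1-a$ a complementary idempotent, so that $a(1-a) = (1-a)a = 0$ and the representation $z = (1-a)z$ (respectively $x = x(1-a)$) is available for any annihilating element. The only point requiring mild care is keeping the two sides consistent—right annihilators pairing with the \emph{left} ideal $(1-a)\R$ and left annihilators with the \emph{right} ideal $\R(1-a)$—but this matches the conventions for $a\R$ and $\R a$ fixed earlier in the section.
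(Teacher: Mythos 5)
Your proof is correct and complete: both inclusions in each equality follow exactly as you argue, with $a^2=a$ giving $a(1-a)=(1-a)a=0$ for one direction and the identity $z=(1-a)z$ (resp.\ $x=x(1-a)$) for annihilating elements giving the other. The paper itself offers no proof---it cites this proposition from the literature---and your double-inclusion argument is precisely the standard one that the cited source uses, so there is nothing to reconcile.
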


Next we recall the definition of group inverse \cite{Drazin58} of an element in $\R$. An element $y$ is called group inverse of $a\in \R$ if $aya=a,~yay=y$, and $ay=ya$. The group inverse of $a$ is denoted by $a^{\#}$. The necessary and sufficient condition for the existence of group inverse is stated in the next result.

\begin{lemma}\label{LemmaHar}[Theorem 1, \cite{HartJia77}]
Let $a\in \R$. Then $a$ is group invertible if and only if $a\in a^2 \R\cap \R a^2.$ 
\end{lemma}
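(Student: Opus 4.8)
The plan is to prove the two implications separately, with the forward (necessity) direction being routine and the converse (sufficiency) carrying essentially all the difficulty. For necessity, I would assume $a^{\#}$ exists and just regroup the defining relations $aa^{\#}a=a$ and $aa^{\#}=a^{\#}a$. Writing $a=aa^{\#}a=a(a^{\#}a)=a(aa^{\#})=a^2a^{\#}$ shows $a\in a^2\R$, and symmetrically $a=aa^{\#}a=(aa^{\#})a=(a^{\#}a)a=a^{\#}a^2$ shows $a\in \R a^2$; hence $a\in a^2\R\cap \R a^2$ with no real computation.

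For sufficiency, assume $a=a^2s$ and $a=ta^2$ for some $s,t\in\R$. The first step I would take is to merge the two witnesses into a single idempotent. Substituting one relation into the other yields $ta=t(a^2s)=(ta^2)s=as$, so the element $e:=as=ta$ is well defined from both sides. I would then check that $e$ is idempotent, $e^2=(ta)(as)=ta^2s=(ta^2)s=as=e$, and that it acts as a two-sided local identity for $a$, namely $ae=a(as)=a^2s=a$ and $ea=(ta)a=ta^2=a$. This reduces the whole problem to inverting $a$ inside the corner ring $e\R e$ whose unit is $e$, noting $a=eae\in e\R e$.

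The decisive step is then the inversion itself. In $e\R e$ the element $ese$ is a right inverse of $a$, since $a(ese)=(ae)se=(as)e=e^2=e$, and $ete$ is a left inverse, since $(ete)a=et(ea)=e(ta)=e^2=e$. A left inverse and a right inverse in a monoid must coincide, which here reads $ete=(ete)e=(ete)\bigl(a(ese)\bigr)=\bigl((ete)a\bigr)(ese)=e(ese)=ese$, so $g:=ese=ete$ is a genuine two-sided inverse of $a$ in $e\R e$. I would finish by verifying that this $g$ is the group inverse in $\R$: the relations $ag=ga=e$ give $ag=ga$, while $aga=ea=a$ and $gag=ge=g$ (the latter using $g\in e\R e$) supply the remaining two axioms.

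The main obstacle is precisely this construction of the group inverse from purely one-sided divisibility data, and the crux is recognizing that the two witnesses $s$ and $t$ produce the \emph{same} idempotent $e=as=ta$; once $e$ is identified everything collapses to the elementary fact that coinciding one-sided inverses in the corner ring $e\R e$ furnish the group inverse. (One could also phrase the construction via annihilators using Proposition \ref{prop2.1}, since $e$ idempotent gives $rann(e)=(1-e)\R$, but the corner-ring route is the most direct.)
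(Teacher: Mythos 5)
Your proof is correct: the identification $e:=as=ta$ of a single idempotent from the two one-sided witnesses, the observation that $e$ is a two-sided local identity for $a$, and the coincidence of the one-sided inverses $ese=ete$ in the corner ring $e\R e$ together yield a valid construction of $a^{\#}=ese$, and all three group-inverse axioms check out. Note, however, that the paper does not prove this lemma at all --- it imports it by citation as Theorem 1 of \cite{HartJia77} --- so there is no internal proof to compare against; your argument is essentially the classical Hartwig-style proof of that cited result, and it is self-contained and sound.
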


We next recall the ``$(v, w)$-weighted'' version of $(b,c)$ inverse from \cite{Drazin20Weighted}.

\begin{definition}\label{wbc} [Theorem 2.1 (i), \cite{Drazin20Weighted}]
Let $a,b,c,v,w\in \R$. An element $y\in \R$ satisfying 
\begin{center}
   $y\in b\R wy\cap yv \R c$, $yvawb=b$ and $cvawy=c$, 
\end{center}
is called the $(v,w)$-weighted $(b,c)$-inverse of $a$ and denoted by $a^{v,w}_{b,c}$.
\end{definition}

Drazin in Proposition 3.2, \cite{Drazin20Weighted} proved that $a^{v,w}_{b,c}$ is unique if exists.


An equivalent characterization of the $(v,w)$-weighted $(b,c)$-inverse is presented in the next result.
 
 \begin{lemma}(Theorem 2.5 and Theorem 2.1, \cite{Drazin20Weighted})\label{lem3.6}  Let $a,b,c,v,w\in \R$.Then the following conditions are equivalent:
\begin{enumerate}[\rm(i)]
\item $a$ has a $(v,w)$-weighted $(b,c)$-inverse.
\item $c\in cvawb\R$ and $b\in \R cvawb$.
\item there exists $y\in \R$ such that $yvawy=y$, $yv\R=b\R$ and $\R wy=\R c$.
\end{enumerate}
\end{lemma}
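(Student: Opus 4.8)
The plan is to prove the chain of equivalences $(i)\Leftrightarrow(ii)\Leftrightarrow(iii)$ by exploiting the defining conditions in Definition~\ref{wbc} together with the ideal-membership reformulations they encode. The key observation driving everything is that the two membership conditions $y\in b\R wy$ and $y\in yv\R c$, combined with the two algebraic identities $yvawb=b$ and $cvawy=c$, let us pass freely between the element $y$, the ideals generated by $b$ and $c$, and the element $cvawb$. The hard part will be managing the asymmetry between the left and right conditions and correctly inserting or cancelling the weights $v$ and $w$ at each step, since $v$ and $w$ are arbitrary (not invertible), so one cannot simply cancel them.

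First I would prove $(i)\Rightarrow(ii)$. Assuming $y=a^{v,w}_{b,c}$ exists, the identity $cvawy=c$ together with $y\in b\R wy$ is the natural starting point: writing $y=br wy$ for some $r\in\R$ and substituting, I aim to show $c\in cvawb\R$. Concretely, from $c=cvawy$ and $y\in b\R$, I would extract $c=cvawb(\cdots)$, producing a factorization of $c$ through $cvawb$ on the right. Symmetrically, using $yvawb=b$ and $y\in yv\R c\subseteq \R c$, I would derive $b\in\R cvawb$. This direction is essentially a substitution argument, feeding the membership conditions into the two identities.

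Next I would prove $(ii)\Rightarrow(iii)$, which I expect to be the main obstacle. The task is to \emph{construct} an element $y$ with $yvawy=y$, $yv\R=b\R$, and $\R wy=\R c$ out of the bare hypotheses $c\in cvawb\R$ and $b\in\R cvawb$. The natural candidate is to build $y$ from $b$ and $c$ via the factorizations guaranteed by $(ii)$: write $b=s\,cvawb$ and $c=cvawb\,t$ for suitable $s,t\in\R$, and then set $y$ to be something of the form $b\,t$ or $s\,c$ (or a symmetrized product), chosen so that the outer ideals come out as $b\R$ and $\R c$ after inserting the weights. Verifying the idempotent-type relation $yvawy=y$ will require showing that the middle factor $vawb$ or $cvaw$ acts as a one-sided identity on the relevant ideal, which follows by iterating the factorizations from $(ii)$; establishing $yv\R=b\R$ and $\R wy=\R c$ then reduces to double-inclusion arguments using those same factorizations. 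Getting the weights to land correctly so that $yv\R$ (not $y\R$) equals $b\R$ is the delicate point.

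Finally, for $(iii)\Rightarrow(i)$, I would take the $y$ supplied by $(iii)$ and verify directly that it satisfies every clause of Definition~\ref{wbc}. The conditions $yv\R=b\R$ and $\R wy=\R c$ immediately give the membership requirements $y\in b\R wy\cap yv\R c$ after combining them with $yvawy=y$ (substituting the ideal identities into the idempotent relation to expose $b$ on the left and $c$ on the right). The two remaining identities $yvawb=b$ and $cvawy=c$ should follow by using $b\R=yv\R$ to replace $b$ and $\R c=\R wy$ to replace $c$ inside $yvawy=y$, then invoking uniqueness (Proposition~3.2 of \cite{Drazin20Weighted}) if needed to pin down the equalities. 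Throughout, the recurring technique is to substitute one characterization into another and cancel using the idempotent relation $yvawy=y$ rather than by cancelling $v$ or $w$ themselves, which is what keeps the argument valid over an arbitrary ring.
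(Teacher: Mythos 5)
Your proposal is essentially correct, but there is nothing in the paper to compare it against: the paper never proves this lemma, it imports it wholesale from the cited reference (Theorems 2.1 and 2.5 of \cite{Drazin20Weighted}). What you have reconstructed is, in effect, Drazin's own argument. Your directions (i)$\Rightarrow$(ii) and (iii)$\Rightarrow$(i) go through exactly as sketched: for the former, substituting $y=brwy$ into $cvawy=c$ gives $c=cvawb(rwy)$, and substituting $y=yvsc$ into $yvawb=b$ gives $b=yvs(cvawb)$; for the latter, writing $yv=br'$ and $wy=t'c$ and using $yvawy=y$ yields $y=b(r'a)(wy)\in b\R wy$ and $y=yv(at')c\in yv\R c$, while $b=yvr$ gives $yvawb=(yvawy)vr=yvr=b$ and symmetrically $cvawy=c$ --- so the fallback appeal to uniqueness you mention is never needed. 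The one point you leave implicit in (ii)$\Rightarrow$(iii), and which is the crux of the whole construction, is the identity that makes your candidate well defined: writing $b=s\,cvawb$ and $c=cvawb\,t$, the element $s(cvawb)t$ brackets two ways, giving $bt=sc$. Calling this common element $y$, every required property follows in one line: $yvawy=(sc)vaw(bt)=s(cvawbt)=sc=y$; $yvawb=s(cvawb)=b$, so $b\R\subseteq yv\R$, while $y=bt$ gives $yv\R\subseteq b\R$; and dually $cvawy=(cvawb)t=c$ together with $wy=wsc$ gives $\R wy=\R c$. With that single identity made explicit, your outline closes up into a complete and correct proof; without it, the verification of $yvawy=y$ from $y=bt$ alone would stall, since no factor $c$ is visible in $btvawbt$ until you rewrite $b$ as $s\,cvawb$.
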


 Following the definition (see \cite{Mary11} Definition 4) of the inverse along an element of $\R$, we now define  $(v,w)$-weighted inverse of $a$ along $d\in \R$ below.

\begin{definition}\label{walongd}
Let $a,d,v,w\in \R$. An element $y\in \R$ satisfying 
\begin{center}
  $yvawd=d=dvawy$, $\R wy\subseteq Rd$, and $yv\R\subseteq d\R$ \end{center}
is called the $(v,w)$-weighted inverse of $a$ along $d\in \R$ and denoted by $a^{v,w}_{\parallel d}$.
 \end{definition}

\begin{example}\rm
Let $\R=M_2(\mathbb{R})$, with $a = \begin{bmatrix}
1 & 1\\
0 & 0
\end{bmatrix}$, $v = \begin{bmatrix}
1 & 1\\
0 & -1
\end{bmatrix}$, $w = \begin{bmatrix}
0 & 1\\
1 & 0
\end{bmatrix}$, and $d = \begin{bmatrix}
1 & 2\\
0 & 0
\end{bmatrix}$. Since the matrix $y= \begin{bmatrix}
1 & 2\\
0 & 0
\end{bmatrix}$, satisfies 
\begin{center}
   $yvawd = \begin{bmatrix}
1 & 2\\
0 & 0
\end{bmatrix}\cdot\begin{bmatrix}
1 & 1\\
0 & -1
\end{bmatrix}\cdot\begin{bmatrix}
1 & 1\\
0 & 0
\end{bmatrix}\cdot\begin{bmatrix}
0 & 1\\
1 & 0
\end{bmatrix}\cdot\begin{bmatrix}
1 & 2\\
0 & 0
\end{bmatrix}\cdot\begin{bmatrix}
1 & 2\\
0 & 0
\end{bmatrix}= d$,
\end{center}
\begin{center}
 $dvawy=\begin{bmatrix}
1 & 2\\
0 & 0
\end{bmatrix}\cdot\begin{bmatrix}
1 & 1\\
0 & -1
\end{bmatrix}\cdot\begin{bmatrix}
1 & 1\\
0 & 0
\end{bmatrix}\cdot\begin{bmatrix}
0 & 1\\
1 & 0
\end{bmatrix}\cdot\begin{bmatrix}
1 & 2\\
0 & 0
\end{bmatrix}=\begin{bmatrix}
1 & 2\\
0 & 0
\end{bmatrix}=d$,   
\end{center}
$wy=\begin{bmatrix}
0 & 0\\
1 & 2
\end{bmatrix}=\begin{bmatrix}
0 & 0\\
1 & 0
\end{bmatrix}\cdot \begin{bmatrix}
1 & 2\\
0 & 0
\end{bmatrix}=r_1d$ and  $yv=\begin{bmatrix}
1 & -1\\
0 & 0
\end{bmatrix} =\begin{bmatrix}
1 & 2\\
0 & 0
\end{bmatrix}\cdot\begin{bmatrix}
1 & -1\\
0 & 0
\end{bmatrix}=dr_{2}$  
 for some $r_{1}=\begin{bmatrix}
0 & 0\\
1 & 0
\end{bmatrix}$ and $r_{2}=\begin{bmatrix}
1 & -1\\
0 & 0
\end{bmatrix}$, it follows that $a^{v,w}_{\parallel d}=y$. 
\end{example}

In view of right [resp. left] hybrid $(v, w)$-weighted $(b, c)$-inverse (see \cite{Drazin20Weighted}, Definition 4.2) and annihilator $(v, w)$-weighted $(b, c)$-inverse (see \cite{Drazin20Weighted}, Definition 4.1) of $a \in \R$, we next present the definition of the hybrid $(v,w)$-weighted $(b,c)$-inverse and  annihilator $(v,w)$-weighted $(b,c)$-inverse  of $a \in \R$. 
\begin{definition}\label{defhyb}
 Let $a,b,c,v,w\in \R$. An element $y\in \R$ satisfying 
 \begin{center}
     $yvawy=y$,  $yv\R=b\R$, and $rann(c)=rann(wy)$,
 \end{center}
 is called the right hybrid (or hybrid) $(v,w)$-weighted $(b,c)$-inverse of $a$ and denoted by $a^{h,v,w}_{b,c}$.
 \end{definition}
In section 4, we will discuss some results for right hybrid inverse $(v,w)$-weighted $(b,c)$-inverse, which can be similarly proved for left hybrid $(v,w)$-weighted $(b,c)$-inverse. So from here onward we call the right right hybrid  $(v,w)$-weighted $(b,c)$-inverse as hybrid  $(v,w)$-weighted $(b,c)$-inverse. 
 
 The existence of hybrid $(v,w)$-weighted $(b,c)$-inverse over a semigroup which proved in \cite{Drazin20Weighted} restated for the ring $\R$, as follows.

\begin{lemma} {(Theorem 4.7, \cite{Drazin20Weighted})}\label{Th4.7Drazin}
Let $1\in \R$ and $a, b, c, v, w  \in \R$. Then $a^{h, v,w}_{b,c}$ exists if and only if $rann(cvawb) \subseteq rann(b)$, and $c \in cvawb \R$. 
\end{lemma}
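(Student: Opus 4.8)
The plan is to prove both implications directly from Definition \ref{defhyb}. Throughout I abbreviate $d := cvawb$, so the two hypotheses read $rann(d)\subseteq rann(b)$ and $c\in d\R$; in particular I may fix $s\in\R$ with $c = ds = cvawb\,s$. The whole argument will rest on a handful of algebraic identities that I extract from these two hypotheses, rather than on the earlier lemmas.

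For sufficiency I intend to exhibit the hybrid inverse explicitly as $y := bs$. The first thing I would extract from the annihilator hypothesis is the identity $b = bsvawb$: since $c = ds$ one has $d = cvawb = d(svawb)$, hence $d(1-svawb)=0$, so $1-svawb\in rann(d)\subseteq rann(b)$ and therefore $b(1-svawb)=0$. With this identity in hand the three defining conditions of Definition \ref{defhyb} fall out by direct substitution. Indeed $yvawy = bs\,vaw\,bs = (bsvawb)s = bs = y$; the relation $bsv\R = b\R$ (the inclusion $\subseteq$ is trivial, and $\supseteq$ uses $b = bsv\cdot awb$) gives $yv\R = b\R$; and for the annihilator condition I note $c = cvaw\,bs = (cva)(wy)$, which yields $rann(wy)\subseteq rann(c)$, while the reverse inclusion uses the hypothesis directly: if $cz=0$ then $d(sz)=0$, so $sz\in rann(d)\subseteq rann(b)$, whence $wyz = wbsz = w(b\,sz)=0$.

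For necessity, let $y = a^{h,v,w}_{b,c}$. Writing $b = yvq$ (possible since $yv\R=b\R$), I first obtain $yvawb = (yvawy)(vq) = y(vq) = b$. Next, $wy(1-vawy) = wy - w(yvawy) = 0$ shows that $1-vawy\in rann(wy)=rann(c)$, so $c = cvawy\in cv\R$; combined with the set equality $cvawb\R = cvaw(yv\R) = cv\R$ (using $b\R = yv\R$ and $cvawy=c$), this gives $c\in cvawb\R$. Finally, to prove $rann(cvawb)\subseteq rann(b)$, I suppose $cvawbz=0$ and set $u := vawbz$; then $u\in rann(c)=rann(wy)$, i.e. $(wy)u=0$, and since $y = yvawy$ I regroup $yu = (yvawy)u = (yva)(wyu)=0$, while on the other hand $yu = (yvawb)z = bz$, forcing $bz=0$.

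The main obstacle is precisely the last step of necessity: from $c(vawbz)=0$ one only directly learns that $wy$ annihilates $vawbz$, and a naive manipulation recovers merely $w\cdot bz = 0$, which is too weak to conclude $bz=0$. The decisive device is to use $y=yvawy$ together with associativity to split $yvawyu$ as $(yva)(wyu)$, thereby converting the available hypothesis $(wy)u=0$ into the desired $yu=0$; checking this regrouping and the identity $yu=bz$ is the technical heart of the argument. A secondary point deserving care is the set equality $cvawb\R = cv\R$ in the $c\in cvawb\R$ step, which hinges on substituting $b\R = yv\R$ and then collapsing $cvawy=c$.
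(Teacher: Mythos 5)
Your proof is correct; I verified every identity in both directions. A point of comparison is slightly unusual here: the paper gives \emph{no} internal proof of this lemma at all — it is imported verbatim as Theorem 4.7 of \cite{Drazin20Weighted} and then used as a black box — so your argument is a self-contained reconstruction rather than a parallel to anything in the text. Two observations. First, your sufficiency construction $y=bs$ with $c=cvawbs$ is exactly the construction the paper itself re-derives later in Theorem \ref{thm3.41} (there written $y=bt$), but the paper reaches it \emph{from} this lemma and routes the verification through Theorem \ref{thm3.1}, whose implication (ii)$\Rightarrow$(i) needs $v$ or $w$ invertible; you instead verify the three conditions of Definition \ref{defhyb} directly from the identity $b=bsvawb$ (obtained by pushing $1-svawb\in rann(cvawb)\subseteq rann(b)$), so your argument needs no invertibility hypothesis and is in that respect strictly more general. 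Second, the step you correctly flag as the technical heart of necessity — setting $u=vawbz$, getting $wyu=0$ from $rann(c)=rann(wy)$, and then regrouping $yu=(yvawy)u=(yva)(wyu)=0$ while also computing $yu=(yvawb)z=bz$ — is sound, and it is precisely the device that upgrades the a priori weaker conclusion $wbz=0$ to the desired $bz=0$; your preliminary identities $yvawb=b$ and $c=cvawy$ (from $1-vawy\in rann(wy)=rann(c)$) are also established correctly.
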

 
 \begin{definition}\label{defann}
 Let $a,b,c,v,w\in \R$. An element $y\in \R$ satisfying 
 \begin{center}
     $yvawy=y$, $lann(yv)=lann(b)$, and $rann(c)=rann(wy)$, 
 \end{center}
 is called the annihilator $(v,w)$-weighted $(b,c)$-inverse of $a$ and denoted by $a^{a,v,w}_{b,c}$.
 \end{definition}
The uniqueness of $a^{h,v,w}_{b,c}$ and $a^{a,v,w}_{b,c}$ proved by the author of \cite{Drazin20Weighted}.  In the light of the Bott-Duffin inverse \cite{Drazin2012}, we introduce the $(v,w)$-weighted Bott-Duffin inverse as follows.

\begin{definition}\label{botdef}
Let $a,v,w,e,f\in \R$ with $e^2=e$ and $f^2=f$. An element $z\in \R$ is called $(v,w)$-weighted Bott-Duffin $(e,f)$-inverse of $a$ if it satisfies 
\begin{center}
$z=ewz=zvf, ~zvawe=e,~fvawz=f$.    
\end{center}
\end{definition}

 The $(v,w)$-weighted Bott-Duffin $(e,f)$-inverse of the element $a$ is denoted as $a^{b,v,w}_{e,f}$.

\begin{example}\rm
Let $\R=M_{2}(\mathbb{R})$ with 
 $a=\begin{bmatrix}
0 & 0\\
0 & 1
\end{bmatrix},~v=\begin{bmatrix}
0 & -1\\
1 & 0
\end{bmatrix}$, $w=\begin{bmatrix}
1 & 0\\
0 & 1
\end{bmatrix}$, $e=\begin{bmatrix}
0 & 0\\
1 & 1
\end{bmatrix}$, and $f=\begin{bmatrix}
1 & 1\\
0 & 0
\end{bmatrix}$. We can verify that the matrix  $z=\begin{bmatrix}
0 & 0\\
-1 & -1
\end{bmatrix}$ satisfies 
\begin{center}
 $zvawe=\begin{bmatrix}
0 & 0\\
-1 & -1
\end{bmatrix}\cdot\begin{bmatrix}
0 & -1\\
1 & 0
\end{bmatrix}\cdot\begin{bmatrix}
0 & 0\\
0 & 1
\end{bmatrix}\cdot\begin{bmatrix}
1 & 0\\
0 & 1
\end{bmatrix}\cdot\begin{bmatrix}
0 & 0\\
1 & 1
\end{bmatrix}=\begin{bmatrix}
0 & 0\\
1 & 1
\end{bmatrix}=e$,   
\end{center}
$fvawz=f$, and $ewz=z=zvf$. Hence $a^{b,v,w}_{e,f}=z$
\end{example}

\section{Further results on $(v,w)$-weighted $(b,c)$-inverses}

In this section, we derive a few useful representations and properties of  $(v,w)$-weighted $(b,c)$-inverse. 
\begin{proposition}\label{prop14}
Let $v,w,d\in \R$.
Then the following are holds:  
\begin{enumerate}[\rm(i)]
    \item If $\R wy=\R d$ ($\R wy\subseteq\R d$) then $rann(wy)=rann(d)$ ($rann(d)\subseteq rann(wy)$).
    \item If $yv \R=d \R$ ($yv \R\subseteq d\R$) then $lann(yv)=lann(d)$ ($lann(d)\subseteq lann(yv)$).
    \item If $rann(d)\subseteq rann(wy)$, and $d^{-}$ exists, then  $\R wy\subseteq \R d$.
    \item If  $lann(d)\subseteq lann(yv)$, and $d^{-}$ exists, then $yvR\subseteq dR$.
    \end{enumerate}
\end{proposition}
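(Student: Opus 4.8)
The plan is to exploit a single structural principle: a one-sided ideal containment forces a factorization, and factorizations propagate through annihilators. Parts (i) and (ii) run purely on this principle, while the converse-type statements (iii) and (iv) will instead be driven by manufacturing one distinguished element of the annihilator out of the inner inverse $d^-$.

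For (i) I would begin from $\R wy \subseteq \R d$ and note that $wy = 1\cdot wy \in \R d$, so $wy = sd$ for some $s\in\R$. Then $dz=0$ immediately gives $wyz = sdz = 0$, which is exactly $rann(d)\subseteq rann(wy)$. Under the stronger hypothesis $\R wy = \R d$ I would also write $d = t\,wy$ and run the same implication in reverse to obtain $rann(wy)\subseteq rann(d)$, yielding equality. Part (ii) is the verbatim left/right mirror: $yv\R\subseteq d\R$ gives $yv = dr$, so $xd=0$ forces $x\,yv = xdr = 0$, i.e. $lann(d)\subseteq lann(yv)$, with equality when $yv\R = d\R$ by also writing $d = yv\,s$.

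For (iii) the key observation is that regularity supplies a canonical element of $rann(d)$: from $dd^-d = d$ we get $d(1 - d^-d) = d - d = 0$, so $1 - d^-d \in rann(d)$. The hypothesis $rann(d)\subseteq rann(wy)$ then forces $wy(1 - d^-d) = 0$, that is $wy = wy\,d^-d = (wyd^-)\,d \in \R d$, and hence $\R wy \subseteq \R d$. Part (iv) is again the mirror: $(1-dd^-)d = 0$ shows $1 - dd^- \in lann(d)$, so $lann(d)\subseteq lann(yv)$ gives $(1-dd^-)yv = 0$, whence $yv = d(d^-yv)\in d\R$ and $yv\R\subseteq d\R$.

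The computations in (i)--(ii) are routine and present no obstacle. The only substantive idea is the one used in (iii)--(iv): recognizing that existence of $d^-$ lets me replace the abstract annihilator containment by the single concrete relation $wy(1-d^-d)=0$ (respectively $(1-dd^-)yv=0$). This is precisely where the hypothesis ``$d^-$ exists'' is indispensable, since without a factorization of $d$ through $d^-$ there is no evident way to pass from an annihilator inclusion back to an ideal inclusion. One could alternatively phrase this through Proposition~\ref{prop2.1} applied to the idempotent $dd^-$ (note $(dd^-)(dd^-)=dd^-d\,d^-=dd^-$), but the direct computation above is self-contained and I would present it that way.
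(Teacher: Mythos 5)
Your proof is correct and follows essentially the same route as the paper: parts (i)--(ii) extract factorizations $wy=sd$, $d=twy$ (and their mirrors) from the ideal containments and push them through the annihilators, and parts (iii)--(iv) use exactly the paper's device of feeding the element $1-d^-d$ (resp.\ $1-dd^-$) of $rann(d)$ (resp.\ $lann(d)$) into the assumed annihilator inclusion to recover $wy=(wyd^-)d$ (resp.\ $yv=d(d^-yv)$). No gaps; nothing further is needed.
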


\begin{proof}
(i) Let $\R wy=\R d$. Then $wy=sd$ for some $s\in \R$. Now if  $z\in rann(d)$, then $dz=0$ and $wyz=sdz=0$. Thus $rann(d)\subseteq rann(wy)$. 

Conversely, let $x\in rann(wy)$. Then $wyx=0$. From $\R wy=\R d$, we can write $d=twy$ for some $t\in \R$. Now $dx=twyx=0$. Hence $rann(wy)\subseteq rann(d)$.\\
(ii) We can show by using the similar lines of part (i).\\
(iii) Let $x\in d\{1\}$. Then $(1-xd)\in rann(d)\subseteq rann(wy)$. Which implies $wy=(wyx)d$. Therefore, $\R wy\subseteq \R d$.\\
(iv) Similar to part (iii).
\end{proof}

 \begin{proposition}\label{pro3.7}
 Let $a,b,c,v,w\in \R$. If $a$ has  $(v,w)$-weighted $(b,c)$-inverse, then both $b$ and $c$ are regular.
 \end{proposition}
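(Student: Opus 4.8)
The plan is to exploit the two membership conditions in Definition \ref{wbc} to rewrite $yv$ as a right multiple of $b$ and $wy$ as a left multiple of $c$, and then to substitute these factorizations into the absorption identities $yvawb=b$ and $cvawy=c$ in order to exhibit explicit inner inverses of $b$ and $c$.

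First I would set $y=a^{v,w}_{b,c}$ and invoke the condition $y\in b\R wy$: there is some $r_1\in\R$ with $y=br_1wy$. Multiplying on the right by $v$ gives $yv=b(r_1wyv)$, so that $yv=bn$ with $n=r_1wyv\in\R$. Substituting this into the identity $yvawb=b$ yields $b=(bn)awb=b(naw)b$, which shows that $naw$ is an inner inverse of $b$ (i.e.\ $bb^{-}b=b$ with $b^{-}=naw$); hence $b$ is regular.

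For $c$ I would argue symmetrically, this time using the companion condition $y\in yv\R c$: there is $r_2\in\R$ with $y=yvr_2c$, and left-multiplying by $w$ gives $wy=(wyvr_2)c=qc$ with $q=wyvr_2\in\R$. Substituting into $cvawy=c$ gives $c=cva(qc)=c(vaq)c$, so $vaq$ is an inner inverse of $c$ and $c$ is regular as well.

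The computations involved are entirely routine; the only step that genuinely requires attention is the initial one, namely turning the containments $y\in b\R wy$ and $y\in yv\R c$ into the one-sided factorizations $yv=bn$ and $wy=qc$. Once these are in hand, the absorption equations immediately produce the desired one-sided inverses, so I do not anticipate a real obstacle. (Note that one could alternatively extract $yv\R=b\R$ and $\R wy=\R c$ from Lemma \ref{lem3.6}(iii), but drawing $yv=bn$ and $wy=qc$ directly from Definition \ref{wbc} keeps the argument self-contained.)
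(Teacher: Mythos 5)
Your proof is correct and follows essentially the same route as the paper: both extract factorizations from the memberships $y\in b\R wy$ and $y\in yv\R c$ and substitute them into the identities $yvawb=b$ and $cvawy=c$ to exhibit explicit inner inverses of $b$ and $c$. The only cosmetic difference is that you first form $yv=bn$ and $wy=qc$ before substituting, whereas the paper plugs $y=bswy$ and $y=yvtc$ in directly; the computation is the same.
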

  \begin{proof}
Let $y$ be the $(v,w)$-weighted $(b,c)$-inverse of $a$. Then by Definition \ref{wbc}, $yvawb=b$, $cvawy=c$ and $y\in bRwy\cap yvRc$. From the ideals, we further obtain  $y=bswy$ and $y=yvtc$ for some $s,t\in \R$. Now $b=yvawb=bswyvawb=bswb$. Thus $b$ is regular. Similarly, we have $c=cvawy=cvawyvtc=cvtc$ which proves the regularity of $c$.
 \end{proof}

We next present a few an equivalent representations  of the $(v,w)$-weighted $(b,c)$-inverse.

\begin{theorem}\label{thm3.9}
Let $a,b,c,v,w\in \R$. Then the following statements are equivalent:
\begin{enumerate}[\rm(i)]
\item $a$ has a $(v,w)$-weighted $(b,c)$-inverse.
\item $b$ is regular, $\R=\R cvaw\oplus lann(b)$, and $lann(vaw)\cap \R c=0$.
\item $\R=\R cvaw\oplus lann(b)$, $lann(vaw)\cap \R c=0$ and $cvawb$ is regular.
\item $c$ is regular, $\R=vawb \R\oplus rann(c)$, and  $rann(vaw)\cap b \R=0$.
\item $\R=vawb \R\oplus rann(c)$, $rann(vaw)\cap b \R=0$ and $cvawb$ is regular.
\end{enumerate}
\end{theorem}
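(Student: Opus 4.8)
The plan is to prove the ``left-sided'' cycle (i) $\Rightarrow$ (ii) $\Rightarrow$ (iii) $\Rightarrow$ (i) by hand, and then to obtain the ``right-sided'' equivalences (i) $\Leftrightarrow$ (iv) $\Leftrightarrow$ (v) for free by reading the proven statements in the opposite ring $\R^{\mathrm{op}}$. The two tools I would lean on are Lemma \ref{lem3.6}(ii), which says that $a$ has a $(v,w)$-weighted $(b,c)$-inverse iff $c\in cvawb\R$ and $b\in\R cvawb$, and Proposition \ref{pro3.7}, which guarantees that $b$ and $c$ are regular whenever (i) holds.

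For (i) $\Rightarrow$ (ii), Lemma \ref{lem3.6}(ii) gives $b=mcvawb$ and $c=cvawbn$ for some $m,n\in\R$. The decomposition $1=mcvaw+(1-mcvaw)$ writes $1$ as a sum of $mcvaw\in\R cvaw$ and $1-mcvaw\in lann(b)$ (because $(1-mcvaw)b=b-b=0$), so $\R=\R cvaw+lann(b)$. Directness is the point to watch: if $scvaw\in\R cvaw$ annihilates $b$ on the right, then $scvawb=0$, and right-multiplying by $n$ and using $cvawbn=c$ collapses this to $sc=0$, hence $scvaw=0$. The same device handles $lann(vaw)\cap\R c=0$: from $rc\cdot vaw=0$ one multiplies by $bn$ and again uses $cvawbn=c$ to force $rc=0$. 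Regularity of $b$ is inherited from Proposition \ref{pro3.7}.

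For (ii) $\Rightarrow$ (iii) the decomposition yields $1=hcvaw+z$ with $zb=0$, so $b=hcvawb$; if $b^-$ is an inner inverse of $b$, a one-line check shows $b^-h$ inverts $cvawb$, since $cvawb\,b^-h\,cvawb=cvaw\,bb^-(hcvawb)=cvaw\,bb^-b=cvawb$. For (iii) $\Rightarrow$ (i), the same identity $b=hcvawb$ gives $b\in\R cvawb$; to get $c\in cvawb\R$ I set $u=cvawb$ with inner inverse $u^-$ and consider $w_0:=uu^-cvaw-cvaw$. One checks $w_0\in\R cvaw$ and $w_0b=uu^-u-u=0$, so $w_0\in\R cvaw\cap lann(b)=0$; thus $uu^-cvaw=cvaw$, i.e. $(1-uu^-)c\in lann(vaw)\cap\R c=0$, and therefore $c=uu^-c\in cvawb\R$. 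I expect this last step to be the main obstacle, since it is the only one that must combine all three hypotheses of (iii) --- the directness of the sum, the annihilator condition, and the regularity of $cvawb$ --- at once.

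It remains to deduce (iv) and (v). Here I would pass to $\R^{\mathrm{op}}$ under the dictionary $\tilde a=a$, $\tilde b=c$, $\tilde c=b$, $\tilde v=w$, $\tilde w=v$. A direct computation shows that the product $\tilde c\tilde v\tilde a\tilde w\tilde b$ formed in $\R^{\mathrm{op}}$ is the element $cvawb$ of $\R$, so condition (i) is self-dual, while left ideals, left annihilators, and the weighted products of $\R^{\mathrm{op}}$ translate exactly into $vawb\R$, $rann(\cdot)$, and $vaw$ in $\R$. Reading the already-established equivalences (i) $\Leftrightarrow$ (ii) and (i) $\Leftrightarrow$ (iii) in $\R^{\mathrm{op}}$ then reproduces, verbatim after translation, the statements (iv) and (v). The only work here is the routine bookkeeping of verifying that each ideal and annihilator dualizes to its stated counterpart.
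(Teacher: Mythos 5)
Your proposal is correct, and the left-sided cycle (i) $\Rightarrow$ (ii) $\Rightarrow$ (iii) $\Rightarrow$ (i) is essentially the paper's own argument: the same appeal to Lemma \ref{lem3.6} and Proposition \ref{pro3.7}, the same decomposition $1=mcvaw+(1-mcvaw)$, the same ``multiply by $n$ and collapse'' trick for directness and for $lann(vaw)\cap\R c=0$, and in (iii) $\Rightarrow$ (i) your element $w_0=(uu^--1)cvaw$ is exactly the paper's $xcvaw$ with $x=1-cvawbt$ specialized to $t=u^-$; the paper merely packages this as the annihilator identity $lann(cvawb)=lann(c)$ before extracting $c=cvawbtc$. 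Where you genuinely diverge is the right-sided half: the paper disposes of (i) $\Rightarrow$ (iv) $\Rightarrow$ (v) $\Rightarrow$ (i) with the single sentence ``similar to (i) $\Rightarrow$ (ii) $\Rightarrow$ (iii) $\Rightarrow$ (i)'', i.e.\ it asks the reader to rerun the mirror-image computations, whereas you obtain (iv) and (v) by transporting the already-proven equivalences to $\R^{\mathrm{op}}$ under the dictionary $\tilde a=a$, $\tilde b=c$, $\tilde c=b$, $\tilde v=w$, $\tilde w=v$. Your dictionary does check out: the product $\tilde c\tilde v\tilde a\tilde w\tilde b$ formed in $\R^{\mathrm{op}}$ is $cvawb$, left ideals and left annihilators of $\R^{\mathrm{op}}$ become right ideals and right annihilators of $\R$, and by Lemma \ref{lem3.6}(ii) condition (i) is self-dual, so reading (ii) and (iii) in $\R^{\mathrm{op}}$ literally yields (iv) and (v). The one hypothesis your route quietly uses is that the left-sided equivalences hold in an arbitrary unital associative ring (so that they may be applied to $\R^{\mathrm{op}}$); since both the theorem's proof and the cited lemmas are stated at that generality, this is fine. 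The trade-off is clear: the paper's ``similarly'' is shorter but informal, while your opposite-ring argument turns the left/right symmetry into an actual proof at the cost of the translation bookkeeping, and it would also transfer verbatim to any future one-sided refinement of the theorem.
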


\begin{proof}
(i)$\Rightarrow$(ii) Assume that $a$ has a $(v,w)$-weighted $(b,c)$-inverse. By Proposition \ref{pro3.7}, we have $b$ is regular. From Lemma \ref{lem3.6}, there exist $p,q\in R$ such that $b=pcvawb$ and $c=cvawbq$. Let $r=1-pcvaw$. Then, $r\in lann(b)$. For any $t\in \R$, 
\begin{center}
$t=t.1=t(pcvaw+r)=tpcvaw+tr\in \R cvaw+lann(b)$    
\end{center}
Therefore, $\R=\R cvaw+lann(b)$.  Moreover, if $u\in \R cvaw\cap lann(b)$, then there is $x\in \R$ such that $u=xcvaw$ and $ub=0$. Now $xc=x(cvawbq)=(ub)q=0$,  and  $u=xcvaw=0$. Thus $\R cvaw\cap lann(b)=\{0\}$.

If $m\in lann(vaw)\cap \R c$, then $mvaw=0$ and $m=sc$, for some $s\in \R$. Therefore, $m=sc=scvawbq=mvawbq=0$. Thus $lann(vaw)\cap Rc=\{0\}$.

(ii)$\Rightarrow$(iii) Since $\R= \R cvaw\oplus lann(b)$. Then, $1=gcvaw+h$, where $g\in \R$ and $h\in lann(b)$. Therefore, $b=gcvawb\in \R cvawb$. Which implies $\R b\subseteq \R cvawb$. Since $\R cvawb\subseteq \R b$ is trivial, it follows that   $\R b=\R cvawb$.  From $\R b=\R cvawb$, we have  $b=scvawb$ and $cvawb=tb$ for some  $s,t\in \R$. Now 
\begin{center}
 $cvawb=tb=tbb^{-}b=cvawbb^{-}scvawb$, where  $b^{-}\in b\{1\}$.  
\end{center}
Hence $cvawb$ is regular.

(iii)$\Rightarrow$(i) Let $\R=\R cvaw\oplus lann(b)$. Then $1=gcvaw+h$ for some $g\in \R$ and $h\in lann(b)$. Therefore, $b=gcvawb\in \R cvawb$. Now , we will prove $lann(c)=lann(cvawb)$. Obviously, $lann(c)\subseteq lann(cvawb)$. For $x\in lann(cvawb)$, we have $xcvaw\in lann(b)\cap \R cvaw=\{0\}$, i.e. $xcvaw=0$. This implies that $xc\in lann(vaw)\cap Rc=\{0\}$. Thus $x\in lann(c)$, i.e. $lann(c)=lann(cvawb)$. Now, let $t\in (cvawb)\{1\}$. Since $(1-cvawbt)cvawb=0$, we  have $1-cvawbt\in lann(cvawb)=lann(c)$. Thus, $c=cvawbtc\in cvawb \R$. Thus by Lemma \ref{lem3.6}, $a$ has a $(v,w)$-weighted $(b,c)$-inverse.

(i)$\Rightarrow$(iv)$\Rightarrow$(v)$\Rightarrow$(i)  similar to (i)$\Rightarrow$(ii)$\Rightarrow$(iii)$\Rightarrow$(i).
\end{proof}

\begin{theorem}\label{thm3.4}
Let $a,b,c,w,v\in \R$. Then the following statements are equivalent:
\begin{enumerate}[\rm(i)]
\item $y=a^{v,w}_{b,c}$.
\item $yvawy=y$, $yv\R=b\R$ and $\R wy=\R c$.
\item $yvawy=y$, $lann(yv)=lann(b)$, $\R wy=\R c$, and $b$ is regular.
\item $yvawy=y$, $yv\R=b\R$, $rann(wy)=rann(c)$, and $c$ is regular.
\item $yvawy=y$, $lann(yv)=lann(b)$ and $rann(wy)=rann(c)$, and both $b,c$ are regular.
\item both $b,c$ are regular, $y=bb^{-}y$, $bb^{-}=yvawbb^{-}$, $yc^{-}c=y$, and $c^{-}c=c^{-}cvawy$.
\item both $b,c$ are regular, $bb^{-}\in \R(c^{-}cvawbb^{-})$, and $c^{-}c\in (c^{-}cvawbb^{-})\R$.
\item both $b,c$ are regular, and there exists $s,t\in \R$ such that $bb^{-}=tc^{-}cvawbb^{-}$, $c^{-}c=c^{-}cvawbb^{-}s$.
\end{enumerate}
\end{theorem}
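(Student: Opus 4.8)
The plan is to use condition (ii) as the central hub, and to lean on one enabling observation: the single relation $yvawy=y$ already forces both $yv$ and $wy$ to be regular. Indeed, right-multiplying $yvawy=y$ by $v$ gives $(yv)(aw)(yv)=yv$, and left-multiplying by $w$ gives $(wy)(va)(wy)=wy$, so $aw\in(yv)\{1\}$ and $va\in(wy)\{1\}$. This is exactly what will let me pass back and forth between the one-sided ideal conditions of (ii) and the annihilator conditions of (iii)--(v).

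For (i)$\Leftrightarrow$(ii) I would argue directly from Definition \ref{wbc} (rather than merely quoting Lemma \ref{lem3.6}), so that the equivalence pins down the specific element $y$. Assuming (i), write $y=brwy$ from $y\in b\R wy$; then $yvawy=(yvawb)rwy=brwy=y$, while $b=yv(awb)\in yv\R$ and $yv=brwyv\in b\R$ give $yv\R=b\R$, and symmetrically $c=(cva)(wy)\in\R wy$ together with $wy\in\R c$ gives $\R wy=\R c$. Conversely, from (ii) I reconstruct the defining relations: since $yvawyv=yv$, the membership $b=yvh$ (from $b\R=yv\R$) yields $yvawb=yvh=b$, and since $wyvawy=wy$, the relation $c=kwy$ yields $cvawy=kwy=c$; the two ideal equalities then supply $y\in b\R wy$ and $y\in yv\R c$.

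To connect (ii) with (iii), (iv), (v) I would use Proposition \ref{prop14} in one direction and regularity of $yv,wy$ in the other. From (ii), Proposition \ref{prop14}(i)--(ii) convert the ideal equalities into the annihilator equalities, and regularity of $b,c$ is free from Proposition \ref{pro3.7} once (ii)$\Rightarrow$(i) is established. For the reverse passage, given $lann(yv)=lann(b)$ with $b$ regular, I recover $yv\R=b\R$ by the two inner-inverse computations $yv=bb^{-}yv\in b\R$ (because $1-bb^{-}\in lann(b)=lann(yv)$) and $b=yv(yv)^{-}b\in yv\R$ (because $1-yv(yv)^{-}\in lann(yv)=lann(b)$); the dual computations give $\R wy=\R c$ from $rann(wy)=rann(c)$. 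Statements (iii) and (iv) are the one-sided instances of this template and (v) is the two-sided combination, so they all follow uniformly.

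Finally, for the idempotent-flavoured conditions I would run the cycle (i)$\Rightarrow$(vi)$\Rightarrow$(vii)$\Leftrightarrow$(viii)$\Rightarrow$(i). The implication (i)$\Rightarrow$(vi) is immediate: $y=yv(awy)\in b\R$ gives $bb^{-}y=y$, right-multiplying $yvawb=b$ by $b^{-}$ gives $bb^{-}=yvawbb^{-}$, and the dual manipulations of $cvawy=c$ give $yc^{-}c=y$ and $c^{-}c=c^{-}cvawy$. For (vi)$\Rightarrow$(vii)/(viii), substituting $yv=yc^{-}cv$ into $bb^{-}=yvawbb^{-}$ gives $bb^{-}=y\,(c^{-}cvawbb^{-})$, and dually $c^{-}c=(c^{-}cvawbb^{-})\,y$, so one may take $s=t=y$; and (vii) is merely the ideal-membership restatement of (viii). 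The crucial closing step is (viii)$\Rightarrow$(i): right-multiplying $bb^{-}=tc^{-}cvawbb^{-}$ by $b$ yields $b=(tc^{-})(cvawb)\in\R cvawb$, and left-multiplying $c^{-}c=c^{-}cvawbb^{-}s$ by $c$ yields $c=(cvawb)(b^{-}s)\in cvawb\R$, whereupon Lemma \ref{lem3.6}(ii) delivers (i). I expect this closure to be the main obstacle: the idea is to recognize that Lemma \ref{lem3.6}(ii) is the correct target, and that multiplying by $b$ and by $c$ cleanly strips away the idempotents $bb^{-}$ and $c^{-}c$ so as to expose the factor $cvawb$.
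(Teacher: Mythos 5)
Your proposal is correct --- every step checks out --- and its skeleton matches the paper's: equivalence of (i) and (ii), conversion between ideal and annihilator conditions for (iii)--(v), and the closing cycle (i)$\Rightarrow$(vi)$\Rightarrow$(vii)$\Rightarrow$(viii)$\Rightarrow$(i), where both you and the paper strip the idempotents by multiplying by $b$ and $c$ and finish with Lemma \ref{lem3.6}(ii). The differences are organizational but worth noting. First, you prove (i)$\Leftrightarrow$(ii) directly from Definition \ref{wbc}, which pins down the specific element $y$, whereas the paper simply cites Lemma \ref{lem3.6} (this is harmless given Drazin's uniqueness result, but your version is more self-contained). Second, the paper handles the middle block as a cycle (ii)$\Rightarrow$(iii)$\Rightarrow$(iv)$\Rightarrow$(v)$\Rightarrow$(ii), extracting regularity of $b$ and $c$ from the ideal equalities via $b=b(saw)b$ and $c=c(vas)c$; you instead use (ii) as a hub with a uniform template driven by the observation that $yvawy=y$ forces $aw\in (yv)\{1\}$ and $va\in (wy)\{1\}$. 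That observation is not idle: the paper's last step ``(v)$\Rightarrow$(ii) follows from Proposition \ref{prop14}(iii) and (iv)'' literally yields only the inclusions $yv\R\subseteq b\R$ and $\R wy\subseteq \R c$, and the reverse inclusions require exactly the regularity of $yv$ and $wy$ (equivalently, the identity $yvawb=b$ obtained from $(yvaw-1)\in lann(yv)=lann(b)$), so your explicit lemma closes a gap the paper leaves implicit. One caveat you share with the paper: in (viii)$\Rightarrow$(i), Lemma \ref{lem3.6} gives only the \emph{existence} of $a^{v,w}_{b,c}$, not that it equals the given $y$; since (vii) and (viii) do not mention $y$ at all, this looseness sits in the statement of the theorem itself rather than in either proof.
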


\begin{proof}
(i)$\Leftrightarrow$(ii). By Lemma \ref{lem3.6}.\\
(ii)$\Rightarrow$(iii) Let $yv\R = b\R$ and $b\R = yv\R$. Then $yv=bs$  and $b=yvt$ for some $s,t\in \R$.  The regularity of $b$ follows from the below expression:
\begin{center}
  $b=yvt=(yvawy)vt=yvaw(yvt)=(yv)awb=b(saw)b$.  
\end{center}
For any $z\in lann(yv)$, we have $zyv=0$. Now $zb=zyvt=0$. Thus $z\in lann(b)$ and subsequently $lann(yv)\subseteq lann(b)$. The reverse inclusion $lann(b)\subseteq lann(yv)$, we can show similarly. Therefore, $lann(yv)=lann(b)$.\\
(iii)$\Rightarrow$(iv) Let $\R wy=\R c$. Then $wy=sc$ and $c=twy$ for some $s,t\in \R$. Now
\begin{center}
    $c=twy=tw(yvawy)=cva(wy)=c(vas)c$.
\end{center}
So $c$ is regular. From $yvawy=y$, we have $(yvaw-1)\in lann(y)\subseteq lann(yv)=lann(b)$. Further,  $yvawb=b$. Thus $b\R\subseteq yv\R$. The reverse inclusion $yv\R \subseteq b\R$ can be shown using Proposition \ref{prop14}(iv). Hence $yv\R=b\R$. For any $z\in rann(wy)$, we have $wyz=0$. Now $cz=twyz=0$. This  implies $z\in rann(c)$. Hence $rann(wy)\subseteq rann(c)$. On the other hand, if $x\in rann(c)$, then $cx=0$. Now $wyx=scx=0$. Therefore, $rann(wy)=rann(c)$.\\
(iv)$\Rightarrow$(v) It is enough to show $b$ is regular and $lann(yv)=lann(b)$. The regularity of $b$ and $lann(yv)=lann(b)$ can be proved in the similar way as (ii)$\Rightarrow$(iii).\\
(v)$\Rightarrow$(ii) Follows from Proposition \ref{prop14}(iii) and (iv).\\
(i)$\Rightarrow$(vi) Let $y=a^{v,w}_{b,c}$. Then there exist $s,t\in \R$ such that $y=bswy$ and $y=yvtc$. So $b=yvawb=bswyvawb=bswb$, implies $b$ is regular. Similarly, we can show $c$ is regular. Now
\begin{center}
  $bb^{-}y=bb^{-}bswy=bswy=y$ and $yvawbb^{-}=bb^{-}$.  
\end{center}
 Similarly we can show, $yc^{-}c=y$ and $c^{-}c=c^{-}cvawy$.\\
(vi)$\Rightarrow$(vii) If (v) holds, then $bb^{-}=yvawbb^{-}=yc^{-}cvawbb^{-}\in \R (c^{-}cvawbb^{-})$. Similarly we can show $c^{-}c\in (c^{-}cvawbb^{-})\R$.\\
(vii)$\Rightarrow$(viii) It is Obvious.\\
(viii)$\Rightarrow$(i) Let $bb^{-}=tc^{-}cvawbb^{-}$. Post-multiplying by $b$, we obtain $b=bb^{-}b=tc^{-}cvawb\in \R c^{-}cvawbb^{-}b=tc^{-}cvawb\in \R c^{-}cvawb$. Similarly, pre-multiplying $c$ to $c^{-}c=c^{-}cvawbb^{-}s$, we obtain 
$c=cvawbb^{-}s\in cvawb\R$. Hence by Lemma \ref{lem3.6}, we obtain  $a^{v,w}_{b,c}=y$.
\end{proof}

The relation between group inverse and $(v,w)$-weighted $(b,c)$-inverse is presented in the next result.

\begin{theorem}
Let $a,b,c,v,w\in \R$. Suppose $a^{v,w}_{b,c}$ exist and there exist $s\in \R$ such that $s\R=b\R$ and $rann(s)=rann(c)$. Then, $vaws,svaw\in \R^{\#}$ and $a^{v,w}_{b,c}=s(vaws)^{\#}=(svaw)^{\#}s$.
\end{theorem}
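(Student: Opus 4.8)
Write $y := a^{v,w}_{b,c}$. The plan is to reduce the statement to a handful of working identities and then to construct the group inverse of $vaws$ explicitly; this will simultaneously yield $vaws\in\R^{\#}$ and the representation $s(vaws)^{\#}=y$. First I would record, via Lemma \ref{lem3.6}(iii), that $yvawy=y$, $yv\R=b\R$ and $\R wy=\R c$, and, via Proposition \ref{pro3.7}, that $b,c$ are regular. Since $yv\R=b\R=s\R$ by hypothesis, I get $yv\R=s\R$; and since $\R wy=\R c$, Proposition \ref{prop14}(i) gives $rann(wy)=rann(c)=rann(s)$. Because $b\R$ is generated by the idempotent $bb^{-}$ and $\R c$ by $c^{-}c$, any generator of such a principal one-sided ideal is regular, so both $s$ and $wy$ are regular; Proposition \ref{prop14}(iii) then upgrades the annihilator equality to the left-ideal equality $\R s=\R wy=\R c$. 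Finally, Definition \ref{wbc} gives $y\in b\R wy\cap yv\R c\subseteq s\R\cap \R s$, so I may fix $\theta,\phi\in\R$ with $y=s\theta=\phi s$.

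Next I would extract the two absorption identities $yvaws=s$ and $svawy=s$. For the first, write $s=yv\beta$ (as $s\R=yv\R$) and use $yvawyv=(yvawy)v=yv$ to obtain $yvaws=(yvawyv)\beta=yv\beta=s$; the second is the mirror computation using $s=\mu wy$ (as $\R s=\R wy$) together with $wyvawy=wy$. An immediate consequence is that $e:=vawy$ and $e':=yvaw$ are idempotents (for instance $e^{2}=vaw(yvawy)=vawy=e$), and that they act as two-sided local units: $e(vaws)=(vaws)e=vaws$ and $e'(svaw)=(svaw)e'=svaw$, so in particular $vaws=e(vaws)e\in e\R e$.

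For group invertibility I would set $G:=e\theta e=vawy\,\theta\,vawy$ and verify the group-inverse axioms for $vaws$. Using $s\theta=y$ and the absorption identities, $(vaws)G=e$ follows quickly and $G$ is a reflexive inner inverse of $vaws$. The decisive point is the commutation $(vaws)G=G(vaws)$, that is $G(vaws)=e$: here $G(vaws)=vaw\,(y\theta vaws)$, and one checks $y\theta vaws=y$ by combining $s\theta vaws=(s\theta)vaws=yvaws=s$ with $y=\phi s$. This establishes $(vaws)^{\#}=G$, whence $s(vaws)^{\#}=(svawy)\theta vawy=s\theta vawy=y\,vawy=yvawy=y$. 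The same scheme with the two factors interchanged — taking $G':=e'\phi e'$ and replacing $y\theta vaws=y$ by the dual identity $svaw\,\phi\, y=y$ (which follows from $svaw\phi s=s$ and $y=s\theta$) — shows $svaw\in\R^{\#}$ and $(svaw)^{\#}s=y$.

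The main obstacle is twofold. First, the passage from the hypothesis $rann(s)=rann(c)$, an annihilator condition, to the usable left-ideal equality $\R s=\R wy$: this forces me to prove beforehand that $s$ and $wy$ are regular and then to invoke Proposition \ref{prop14}(iii). Second, once all identities are in place, the step $G(vaws)=(vaws)G$ that promotes a reflexive inner inverse to a genuine group inverse — this is precisely where both one-sided memberships $y\in s\R$ and $y\in \R s$ (equivalently both factorizations $y=s\theta=\phi s$) must be used at once.
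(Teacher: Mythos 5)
Your proof is correct, but it takes a genuinely different route from the paper's. The paper transfers the hypotheses to $vaws$ through Theorem \ref{thm3.9}: from $rann(vaw)\cap b\R=0$ it deduces $rann(vaws)=rann(s)=rann(c)$, from $\R=vawb\R\oplus rann(c)$ it gets $\R=vaws\R\oplus rann(vaws)$, and from the resulting identity $1=vawsu+t$ it extracts $vaws\in(vaws)^2\R\cap\R(vaws)^2$, concluding group invertibility abstractly via Hartwig's criterion (Lemma \ref{LemmaHar}); it then verifies that $s(vaws)^{\#}$ satisfies the characterization of Theorem \ref{thm3.4}(iv). You never touch Theorem \ref{thm3.9} or Lemma \ref{LemmaHar}: you work directly with the defining identities of $y$, prove that $s$ and $wy$ are regular (as generators of the idempotent-generated ideals $bb^{-}\R$ and $\R c^{-}c$ --- a small auxiliary fact the paper never needs), upgrade the annihilator equality $rann(s)=rann(wy)$ to $\R s=\R wy$ via Proposition \ref{prop14}(iii), derive the absorption identities $yvaws=s=svawy$, and then exhibit the group inverse in closed form, $(vaws)^{\#}=(vawy)\theta(vawy)$ with $y=s\theta$, checking the three axioms by hand; all of these computations are valid (in particular $y\theta vaws=y$ and $svaw\phi y=y$ do follow from the two-sided factorization $y=s\theta=\phi s$ exactly as you say). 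What each approach buys: yours is constructive and self-contained, producing explicit formulas for $(vaws)^{\#}$ and $(svaw)^{\#}$ rather than a bare existence statement, and it treats the dual claim $(svaw)^{\#}s=y$ in full, which the paper dismisses with ``similarly''; the paper's proof is shorter given the machinery it has already built and fits its overall strategy of recycling Theorems \ref{thm3.9} and \ref{thm3.4}, at the price of resting on those heavier results.
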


\begin{proof}
First, we will show that $vaws\in \R^{\#}$ and $a^{v,w}_{b,c}=s(vaws)^{\#}$. Let $g\in rann(vaws)$. Then  $vawsg=0$. Which implies $sg\in rann(vaw)\cap s\R=rann(vaw)\cap b\R=\{0\}$ by Theorem \ref{thm3.9}. It follows that $sg=0$ and $g\in rann(s)$. Thus, $rann(vaws)\subseteq rann(s)$ and consequently $rann(vaws)=rann(s)=rann(c)$. Since $s\R=b\R$, we have $vaws\R=vawb\R$. Using Theorem \ref{thm3.9}, we get 
\begin{center}
 $\R=vawb\R\oplus rann(c)=vaws\R\oplus rann(vaws)$.   
\end{center}
Thus, $1=vawsu+t$, for some $u\in \R$ and $t\in rann(vaws)$. Now $vaws=vawsvawsu$. This yields $vaws(vaws-vawsuvaws)=0$. Hence  $(vaws-vawsuvaws)\in rann(vaws)\cap vawsR=\{0\}$ and subsequently,
\begin{equation}\label{eq9}
      vaws=vawsuvaws=vawsvawsu.
  \end{equation}
  Clearly, $vawsu$ is idempotent. Using Proposition \ref{prop2.1}, we obtain  $rann(vawsu)=(1-vawsu)\R$. Using equation \eqref{eq9}, we obtain $rann(vawsu)\subseteq rann(vaws)$. Given any $h \in rann(vaws)$, then $vawsh=0$ for some $h\in \R$. So $vawsuh=vawsuvawsh=0$. Hence, $rann(vaws)\subseteq rann(vawsu)$ and thus $rann(vaws)=rann(vawsu)$. Again, $vaws-uvawsvaws\in rann(vaws)=rann(vawsu)$, hence we have $vawsu(vaws-uvawsvaws)=0$. From equation \eqref{eq9}, we get 
\begin{equation} \label{eq10}
vaws=vawsuvaws=(vawsu^2)vawsvaws.
\end{equation}
From equations \eqref{eq9} and \eqref{eq10}, we have $vaws\in \R(vaws)^2\cap (vaws)^2\R$. Hence by Lemma \ref{LemmaHar}, $vaws$ is group invertible. 

Next we will show that $s(vaws)^{\#}$ is the $(v,w)$-weighted $(b,c)$-inverse of $a$. Let $t=s(vaws)^{\#}$. Then 
\begin{center}
    $tvawt=s(vaws)^{\#}vaws(vaws)^{\#}=s(vaws)^{\#}=t$.
\end{center}
Clearly, $tv\R=(s(vaws)^{\#})v\R\subseteq sv\R\subseteq s\R=b\R$. Since $vaws((vaws)^{\#}vaws-1)=0$ and $rann(vaws)=rann(s)$, it follows that $s(vaws)^{\#}vaws=s$. Hence,
\begin{center}
   $bR=s\R=(s(vaws)^{\#}vaws)\R\subseteq tvaws\R\subseteq tv\R$. 
\end{center}
Similarly, we have 
\begin{eqnarray*}
    rann(wt)&=&rann(ws(vaws)^{\#})\subseteq rann(vaws(vaws)^{\#})=rann(vaws)=rann(s)\\
    &=&rann(c),
\end{eqnarray*}
and 
\begin{eqnarray*}
    rann(c)&=&rann(s)=rann(vaws)
    =rann(vaws(vaws)^{\#})\\
    &&\subseteq rann(s(vaws)^{\#}vaws(vaws)^{\#})
    =rann(s(vaws)^{\#})\\
    &=&rann(t)\subseteq rann(wt).
\end{eqnarray*}
Hence by Proposition \ref{pro3.7} and Theorem \ref{thm3.4}(iv), we obtain $a^{v,w}_{b,c}=t=s(vaws)^{\#}$.  Similarly, it can be shown that $svaw\in \R^{\#}$ and $a^{v,w}_{b,c}=(vaws)^{\#}s$.
\end{proof}

\begin{theorem} \label{thm4.11}
Let $a,v,w \in \R$. If $e,f\in \R$ with $e^2=e$ and $f^2=f$, then the following are equivalent;
\begin{enumerate}[\rm(i)]
 \item $e\in e\R fvawe$ and $f\in fvawe\R f$.
 \item there exist $m,n\in \R$ such that $p=mfvawe+1-e$ is invertible, and $fvawep^{-1}n=f$.
 \item there exist $m,n\in \R$ such that $q=fvawen+1-f$ is invertible and $mq^{-1}fvawe=e$;
 \item there exist $m,n\in \R$ such that $p=mfvawe+1-e$ and $q=fvawen+1-f$ are invertible.
\end{enumerate}
\end{theorem}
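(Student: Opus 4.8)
The plan is to collapse the whole statement onto the single element $g:=fvawe$. Because $e^2=e$ and $f^2=f$, this $g$ satisfies the one-sided absorption relations $fg=g$ and $ge=g$ (the first from $f\cdot fvawe=f^2vawe=g$, the second from $fvawe\cdot e=g$). With this abbreviation condition (i) reads $e\in e\R g$ and $f\in g\R f$, while the two perturbed elements are $p=mg+1-e$ and $q=gn+1-f$. I would state these reductions at the outset so that every subsequent computation is a short manipulation inside $\R$.

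The engine of the argument is a pair of elementary identities obtained by multiplying $p$ and $q$ by the idempotents: $pe=mge+e-e=mg$ (using $ge=g$ and $e^2=e$) and $fq=fgn+f-f=gn$ (using $fg=g$ and $f^2=f$). I would record next the \emph{idempotent promotion} observation: for the idempotent $e$, membership $e\in\R g$ already forces $e\in e\R g$, since writing $e=xg$ and left-multiplying by $e$ gives $e=e^2=exg\in e\R g$; dually $f\in g\R$ forces $f\in g\R f$. Combining these, the three implications back to (i) are immediate. From (ii): invertibility of $p$ with $pe=mg$ gives $e=p^{-1}mg\in\R g$, hence $e\in e\R g$, while $gp^{-1}n=f$ gives $f\in g\R$, hence $f\in g\R f$. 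From (iv): $pe=mg$ and $fq=gn$ give the same two memberships. From (iii): $mq^{-1}g=e$ gives $e\in\R g$ and $fq=gn$ with $q$ invertible gives $f\in g\R$.

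For the forward directions I would exploit the freedom in the existential quantifiers to make $p$ or $q$ equal to $1$. Writing $e=em_1g$ (from $e\in e\R g$) and choosing $m=em_1$ yields $mg=e$, hence $p=1$; writing $f=gn_1f$ (from $f\in g\R f$) and choosing $n=n_1f$ yields $gn=f$, hence $q=1$. With these choices the auxiliary equations hold on the nose: $gp^{-1}n=gn=f$ gives (i)$\Rightarrow$(ii), $mq^{-1}g=mg=e$ gives (i)$\Rightarrow$(iii), and taking both choices simultaneously gives (i)$\Rightarrow$(iv). Together with the backward implications above this establishes the full equivalence, e.g. via the cycle (i)$\Rightarrow$(iv)$\Rightarrow$(ii)$\Rightarrow$(i) supplemented by (iv)$\Rightarrow$(iii)$\Rightarrow$(i).

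The main obstacle is not depth but bookkeeping at two points. First, one must pass carefully between the one-sided ideal memberships $\R g$ and $g\R$ produced by invertibility of $p$ and $q$ and the two-sided, idempotent-sandwiched forms $e\R g$ and $g\R f$ demanded by (i); this passage is exactly where idempotency of $e$ and $f$ is indispensable, so the promotion step must be proved rather than assumed. Second, in (ii) and (iii) the element $m$ (resp. $n$) occurs both inside $p$ (resp. $q$) and in the side equation, so when deriving these from (iv) one cannot change $p$ or $q$ afterwards; the side equation must be solved with the fixed $p$ (resp. $q$) by absorbing a compensating factor of $p$ (resp. $q$) into the new multiplier, e.g. solving $gp^{-1}n=f$ by first finding $n_0$ with $gn_0=f$ (available since $f\in g\R$) and then setting $n=pn_0$.
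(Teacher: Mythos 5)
Your proposal is correct and follows essentially the same route as the paper: in the forward direction both choose the multipliers from the ideal memberships so that $p=q=1$, and in the backward direction both use the identities $pe=mfvawe$ and $fq=fvawen$ together with the idempotent trick (pre-multiplying by $e$, post-multiplying by $f$) to promote $\R fvawe$ and $fvawe\R$ memberships to $e\R fvawe$ and $fvawe\R f$. Your abbreviation $g=fvawe$ and the explicitly stated promotion lemma are only cosmetic repackagings of the paper's computations.
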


\begin{proof}
(i)$\Rightarrow$ (ii),(iii):  Let $e\in \R fvawe$ and $f\in fvawe\R$. Then there exist $m,n\in \R$ such that $e=mfvawe$ and $f=fvawen$. Take $p=mfvawe+1-e$ and $q=fvawen+1-f$. Then $p=q=1$, $f=fvawen=fvawep^{-1}n$ and $e=mq^{-1}fvawe$.\\
(ii)$\Rightarrow$(i) From $f=fvawep^{-1}n$ and $pe=mfvawe$, we have $e=p^{-1}mfvawe$. Post-multiplying $f=fvawep^{-1}n$ by $f$ and pre-multiplying $e=p^{-1}mfvawe$ by $e$, we obtain $e\in e\R fvawe$ and $f\in fvawe\R f$.\\
(iii)$\Rightarrow$ (i) Similar to (ii)$\Rightarrow$(i).\\
(ii)$\Rightarrow$(iv) Using (iii), we have $f=fvawep^{-1}n$.  Let $n_{1}=p^{-1}n$, then $fvawen_{1}+1-f=f+1-f=1$. So there exist $m,n_{1}$ such that $p=mfvawe+1-e$ and $q=fvawen_{1}+1-f$ are invertible.\\
(iv)$\Rightarrow$(i) Let $p=mfvawe+1-e$. Then  $pe=mfvawe$ and subsequently, $e=p^{-1}mfvawe$. Now $e=e^{2}=ep^{-1}mfvawe\in e\R fvawe$. Similarly, we can show $f\in fvawe\R f$.
\end{proof}

Following the Definition \ref{botdef}, we present following characterizations for $(v,w)$-weighted  Bott-Duffin $(e,f)$-inverse.

\begin{proposition}\label{prop4.10}
Let $a,v,w,e,f\in \R$ with $e^2=e$ and $f^2=f$. If $a^{b,v,w}_{e,f}$ exists then $e\in e\R fvawe$ and $f\in fvawe\R f$.
\end{proposition}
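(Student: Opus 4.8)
The plan is to work directly from Definition \ref{botdef}. Writing $z = a^{b,v,w}_{e,f}$, I have the four relations $z = ewz$, $z = zvf$, $zvawe = e$, and $fvawz = f$ at my disposal, and the whole argument amounts to substituting two of these into the other two so as to manufacture the required one-sided factorizations. The key observation is that $z = ewz$ lets me pull a leading idempotent $e$ out of any expression beginning with $z$, while $z = zvf$ lets me pull a trailing $f$ out of any expression ending in $z$; each of the two membership claims needs exactly one application of each.

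For the claim $e \in e\R fvawe$, I would first substitute $z = zvf$ into the defining identity $e = zvawe$ to obtain $e = (zvf)vawe = zv\cdot fvawe$, which already exhibits the desired right factor $fvawe$. Then I would use $z = ewz$ in the form $zv = e(wzv)$ to rewrite this as $e = e(wzv)fvawe$, placing an $e$ on the left; hence $e \in e\R fvawe$ with witness $wzv \in \R$.

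For the claim $f \in fvawe\R f$, I would run the symmetric argument in the opposite order: substitute $z = ewz$ into $f = fvawz$ to get $f = fvaw(ewz) = fvawe\cdot wz$, which produces the left factor $fvawe$; then apply $z = zvf$ in the form $wz = (wzv)f$ to obtain $f = fvawe(wzv)f$, giving $f \in fvawe\R f$.

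I do not anticipate a genuine obstacle here, since no regularity or annihilator hypothesis is invoked and even the idempotency of $e$ and $f$ plays no explicit role; the content is purely the bookkeeping of which of the two relations $z = ewz$ and $z = zvf$ to insert and on which side. The only point requiring a little care is to substitute in the correct order---inserting $z = zvf$ before invoking $z = ewz$ for the first claim, and vice versa for the second---so that the leading $e$ and the trailing $f$ survive the manipulation rather than being absorbed.
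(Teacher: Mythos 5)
Your proposal is correct and is essentially the paper's own proof: the paper also derives the identities $e=e(wzv)fvawe$ and $f=fvawe(wzv)f$ by substituting the relations $z=ewz$ and $z=zvf$ into $e=zvawe$ and $f=fvawz$, merely performing the two substitutions in the opposite order (which, contrary to your closing caution, works equally well). No gap.
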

\begin{proof}
Let $z=a^{b,v,w}_{e,f}$. Then by  Definition \ref{botdef}, 
\begin{equation*}
    \begin{split}
    e&=zvawe=ewzvawe=e(wzv)fvawe\in e\R fvawe, \textnormal{~~and~~}\\ 
    f&=fvawz=fvawzvf=fvawe(wzv)f\in fvawe\R f.\qedhere 
\end{split}
\end{equation*}
\end{proof}

\begin{theorem}
 Let $a,e,f,v,w\in \R$ such that $e=e^*=e^2$ and $f=f^*=f^2$. If $a^{b,v,w}_{e,f}$ exists, then the following are hold:
 \begin{enumerate}[(i)]
     \item $e\in \R (fvawe)^{*}fvawe$ and $f\in fvawe(fvawe)^{*}\R$;
     \item $p=(fvawe)^{*}fvawe+1-e$ is invertible and $fvawep^{-1}(fvawe)^{*}=f$.
     \item  $q=fvawe(fvawe)^{*}+1-f$ is invertible and $(fvawe)^{*}q^{-1}fvawe=e$
 \end{enumerate}
\end{theorem}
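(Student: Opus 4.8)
The plan is to take Proposition \ref{prop4.10} as the starting point and bootstrap its conclusion $e\in e\R fvawe$, $f\in fvawe\R f$ up to the symmetrized statements (i)--(iii), exploiting $e=e^*$ and $f=f^*$ throughout. For brevity write $g=fvawe$, so that $g^*=(fvawe)^*=e(vaw)^*f$. First I would record the basic ``corner'' identities $fg=g=ge$ (immediate from $e^2=e$, $f^2=f$) and, taking adjoints, $eg^*=g^*=g^*f$; these say that $g$ sits in the $(f,e)$-corner and $g^*$ in the $(e,f)$-corner, and they are used repeatedly.

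For part (i), I would start from Proposition \ref{prop4.10}, which gives $x,y\in\R$ with $e=exg$ and $f=gyf$. Taking the adjoint of $f=gyf$ and using $f=f^*$ yields $f=fy^*g^*$; combined with $g=fg$ this gives $g=fy^*g^*g\in\R g^*g$, whence $e=exg\in\R g^*g$, i.e. $e\in\R(fvawe)^*fvawe$. Dually, the adjoint of $e=exg$ is $e=g^*x^*e$; combined with $g=ge$ this gives $g=gg^*x^*e\in gg^*\R$, whence $f=gyf\in gg^*\R$, i.e. $f\in fvawe(fvawe)^*\R$.

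For part (ii) the key observation is that $g^*g$ is self-adjoint and lies in the corner ring $e\R e$ (since $eg^*g\,e=g^*g$). Part (i) gives $e\in\R g^*g$, which, after replacing the multiplier by its compression into $e\R e$, yields a left inverse $k\in e\R e$ of $g^*g$ in that corner, i.e. $kg^*g=e$. Here is where self-adjointness is essential: taking the adjoint of $kg^*g=e$ produces a right inverse, so $g^*g$ is in fact two-sided invertible in $e\R e$. Then $p=g^*g+1-e$ is invertible in $\R$ with $p^{-1}=k+1-e$, a routine verification using $g^*g\,e=g^*g$ and $ek=k$. Finally $gp^{-1}g^*=gkg^*$ (the $1-e$ term dies because $ge=g$), and writing $h=gkg^*$ one checks $hg=g$ (from $kg^*g=e$) and $hf=h$ (from $g^*f=g^*$); combining $hg=g$ with $f\in gg^*\R$ from part (i) gives $f=hf=h$, i.e. $fvawe\,p^{-1}(fvawe)^*=f$. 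Part (iii) is the mirror image, carried out in the corner $f\R f$ with the self-adjoint element $gg^*$ (invertible there by part (i) together with the same adjoint trick), giving $q^{-1}=l+1-f$ and $g^*q^{-1}g=g^*lg=e$.

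I expect the main obstacle to be the invertibility step rather than the algebraic identities: from part (i) one only obtains a one-sided (left) inverse of $g^*g$ inside $e\R e$, which by itself would \emph{not} force $p$ to be invertible. The essential point, and the place where the hypotheses $e=e^*$ and $f=f^*$ are genuinely used, is that self-adjointness of $g^*g$ lets one upgrade this one-sided inverse to a two-sided inverse of $g^*g$ in the corner; everything else is bookkeeping with the identities $fg=g=ge$ and $eg^*=g^*=g^*f$.
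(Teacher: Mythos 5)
Your proposal is correct, but it reaches (i)--(iii) by a route that genuinely differs from the paper's in two places. For part (i), the paper does not work with the raw multipliers of Proposition \ref{prop4.10}: it first feeds that proposition into Theorem \ref{thm4.11} to obtain an invertible element $r=m\,fvawe+1-e$ together with the representations $e=r^{-1}m\,fvawe$ and $fvawe\,r^{-1}n=f$, and only then applies the involution; you apply the involution directly to $e=exg$ and $f=gyf$ (with your shorthand $g=fvawe$), so Theorem \ref{thm4.11} is never invoked. For part (ii), the paper exhibits $p^{-1}$ explicitly as $\beta e+1-e$ with $\beta=(r^{-1}m)(r^{-1}n)^{*}$, and the two-sidedness of this inverse is checked through the computed symmetry $\beta e=e\beta^{*}$ (itself a consequence of the relation $r^{-1}mf=er^{-1}n$); you instead observe that $g^{*}g$ is a self-adjoint element of the corner ring $e\R e$, extract a left inverse $k\in e\R e$ from part (i), and apply $*$ to $kg^{*}g=e$ to produce the right inverse $k^{*}\in e\R e$, whence the monoid argument gives $k=k^{*}$ and $p^{-1}=k+1-e$. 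Structurally the two proofs build the same object, namely the inverse of $g^{*}g$ in the corner $e\R e$ padded by $1-e$, but yours is more self-contained and isolates cleanly where the hypotheses $e=e^{*}$ and $f=f^{*}$ are actually used, while the paper's stays aligned with its earlier machinery and keeps explicit formulas in terms of the witnesses supplied by Theorem \ref{thm4.11}. Your closing step $f=hf=h$ for $h=gkg^{*}$ (using $hg=g$, $g^{*}f=g^{*}$, and $f\in gg^{*}\R$) is a sound replacement for the paper's direct computation $fvawe\,p^{-1}(fvawe)^{*}=fvawe\,r^{-1}mf=fvawe\,r^{-1}n=f$, and your treatment of (iii) as the mirror argument in $f\R f$ matches the paper's own one-line disposal of that part.
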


\begin{proof}
(i) Let $a^{b,v,w}_{e,f}$ exists. Then by Proposition \ref{prop4.10} and Theorem \ref{thm4.11}, we get $r=gfvawe+1-e$ is invertible and $fvawer^{-1}h=f$ for some $g,h\in \R$. Using this, we have $e=r^{-1}gfvawe$. Now 
\begin{eqnarray*}
e^{*}&=&(r^{-1}gfvawe)^{*}=(fvawe)^{*}(r^{-1}g)^{*}=(fvawe)^{*}f(r^{-1}g)^{*}\\
&=&(fvawe)^{*}fvawer^{-1}h(r^{-1}g)^{*}.
\end{eqnarray*}
Thus $e=(r^{-1}g)(r^{-1}h)^{*}(fvawe)^{*}fvawe\in \R (fvawe)^{*}fvawe$. Similarly, we can show that
$f=fvawe(fvawe)^{*}(r^{-1}g)^{*}r^{-1}h$ and $f\in fvawe(fvawe)^{*}\R $.\\
(ii) From part (i), we have $e=r^{-1}gfvawe$ and $fvawer^{-1}h=f$. So $r^{-1}gf=er^{-1}h$. Let $\beta=(r^{-1}g)(r^{-1}h)^{*}$. Then 
\begin{eqnarray*}
\beta e&=&\beta e^{*}=(r^{-1}g)(r^{-1}h)^{*}(fvawe)^{*}(r^{-1}g)^{*}=r^{-1}gf(r^{-1}g)^{*}\\
&=&er^{-1}h(r^{-1}g)^{*}=e\beta^{*}
\end{eqnarray*}
Thus $(\beta e+1-e)(fvawe)^{*}fvawe+1-e)=1=(fvawe)^{*}fvawe+1-e)(\beta e+1-e)$. Hence $p=(fvawe)^{*}fvawe+1-e$ is invertible and $p^{-1}=(\beta e+1-e)$. Further,
\begin{equation*}
    \begin{split}
    fvawep^{-1}(fvawe)^{*}&=fvawe(\beta e+1-e)(fvawe)^{*}=fvawe\beta (fvawe)^{*}\\
    &=fvawer^{-1}g(r^{-1}h)^{*}(fvawe)^{*}=fvawer^{-1}gf\\
    &=fvawer^{-1}h=f. \qedhere 
    \end{split}
\end{equation*}
(iii) Analogous to (ii).
\end{proof}

\section{Hybrid (v,w)-weighted (b,c)-inverse}
First we discuss an equivalent definition of hybrid  $(v, w)$-weighted  $(b, c)$-inverse, which will help us to prove more characterizations of this inverse.

\begin{theorem}\label{thm3.1}
 Let $a,b,c,v,w,y\in \R$ with either $v$ or $w$ be invertible. Then the followings are equivalent.
\begin{enumerate}[\rm(i)]
\item $yvawy=y, yv\R=b\R \mbox{ and  }rann(wy)=rann(c)$.
\item $yvawb=b,~ cvawy=c,~yv\R\subseteq b\R \mbox{ and }rann(c) \subseteq rann(wy)$.
\end{enumerate}
\end{theorem}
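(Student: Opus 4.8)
The plan is to prove the two implications separately, treating (i) as the definition of the hybrid $(v,w)$-weighted $(b,c)$-inverse and (ii) as a weaker-looking ``absorbing'' form, and to isolate the single place where the invertibility hypothesis on $v$ or $w$ is genuinely needed.

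For (i)$\Rightarrow$(ii), I would first note that the inclusions $yv\R\subseteq b\R$ and $rann(c)\subseteq rann(wy)$ are immediate from the equalities assumed in (i). To get $yvawb=b$, I would use that $b\in b\R=yv\R$, so $b=yvs$ for some $s\in\R$, and then compute $yvawb=yvaw(yvs)=(yvawy)vs=yvs=b$, invoking $yvawy=y$. To get $cvawy=c$, I would observe that $wy(1-vawy)=wy-w(yvawy)=wy-wy=0$, so $1-vawy\in rann(wy)=rann(c)$, whence $c(1-vawy)=0$, that is, $cvawy=c$. This direction needs no invertibility.

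For (ii)$\Rightarrow$(i), I would first upgrade the two inclusions to equalities. Since $b=yvawb=yv(awb)\in yv\R$, we get $b\R\subseteq yv\R$, which together with the assumed $yv\R\subseteq b\R$ gives $yv\R=b\R$. For the annihilators, if $wyx=0$ then $cx=(cvawy)x=(cva)(wyx)=0$, so $rann(wy)\subseteq rann(c)$; combined with the assumed $rann(c)\subseteq rann(wy)$ this yields $rann(wy)=rann(c)$. These steps are again annihilator/ideal bookkeeping and use no invertibility.

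The crux is establishing $yvawy=y$, and this is where the hypothesis enters. From $yv\in yv\R\subseteq b\R$ I would write $yv=bs$; substituting $b=yvawb$ gives $yv=(yvawb)s=yvaw(bs)=yvawyv$. If $v$ is invertible, right-multiplying $yv=yvawyv$ by $v^{-1}$ yields $y=yvawy$. Dually, since $1-vawy\in rann(c)\subseteq rann(wy)$ we have $wy(1-vawy)=0$, that is, $wy=w(yvawy)=wyvawy$; if $w$ is invertible, left-multiplying by $w^{-1}$ yields $y=yvawy$. I expect this cancellation to be the main, indeed the only nontrivial, obstacle: without an invertible weight one can only reach the weaker absorbing identities $yv=yvawyv$ and $wy=wyvawy$, and the invertibility of $v$ or $w$ is precisely what converts one of them into the required idempotent-type equation $yvawy=y$.
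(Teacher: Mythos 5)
Your proposal is correct and follows essentially the same route as the paper's own proof: in both directions you perform the same ideal/annihilator bookkeeping, and the key identity $yvawy=y$ is obtained exactly as in the paper, by writing $yv=bs$ and cancelling $v^{-1}$ on the right when $v$ is invertible, or by noting $1-vawy\in rann(c)\subseteq rann(wy)$ and cancelling $w^{-1}$ on the left when $w$ is invertible. The only cosmetic difference is that in (i)$\Rightarrow$(ii) you place $1-vawy$ directly in $rann(wy)$, whereas the paper routes it through $rann(y)\subseteq rann(wy)$; both are valid.
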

\begin{proof}
(i)$\Rightarrow$(ii) Let $yvawy=y$ and $yv\R=b\R$. Then there is  $t\in \R$ such that $b=yvt=yvawyvt=yvawb$. From $yvawy=y$, we obtain $1-vawy$ $\in rann(y)\subseteq rann(wy)=rann(c)$. Thus $c=cvawy$.\\
 (ii)$\Rightarrow$(i) Let $yv\R\subseteq b\R$. Then  $yv=br$ for some $r\in \R$. Multiplying $yvawb=b$ by $rv^{-1}$ on the right gives $yvawy=y$. If $w$ is invertible then $yvawy=y$ is similarly follows from $cvawy=c$. Using $yvawb=b$, we get $b\R\subseteq yvR$, and hence $yv\R=b\R$. Now, let $s\in rann(wy)$.  Then $wys=0$. Further, $s \in rann(c)$ since $cs=cvawys=0$. Hence $rann(wy)=rann(c)$.
\end{proof}

In view of Lemma \ref{Th4.7Drazin}, we explore a necessary condition for  hybrid $(v,w)$-weighted $(b,c)$-inverse in the below result.

\begin{theorem}\label{thm3.41}
Let $a,b,c,v,w,t\in\R$ with either $v$ or $w$ be invertible. If the hybrid $(v,w)$-weighted $(b,c)$-inverse of $a$ exists, then $bt$ is the hybrid $(v,w)$-weighted $(b,c)$-inverse of $a$ satisfying  $c=cvawbt$. 
\end{theorem}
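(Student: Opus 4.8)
The plan is to let $y$ denote the hybrid $(v,w)$-weighted $(b,c)$-inverse of $a$, which exists by hypothesis, and to produce an explicit factorization $y=bt$. By Definition \ref{defhyb}, $y$ satisfies the three relations $yvawy=y$, $yv\R=b\R$, and $rann(wy)=rann(c)$, and these are all I would need.

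The first step is the key factorization. Since $yv\in yv\R=b\R$, there is an element $r\in\R$ with $yv=br$. Feeding this into the idempotence-type relation $y=yvawy$ gives
\[
y=yvawy=br\,awy=b(r\,awy).
\]
Setting $t:=r\,awy$ then yields $y=bt$. Because $bt$ equals $y$, it automatically inherits the three defining conditions of Definition \ref{defhyb}, so $bt$ is indeed the hybrid $(v,w)$-weighted $(b,c)$-inverse of $a$.

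It remains to establish $c=cvawbt$, equivalently $c=cvawy$. Multiplying $y=yvawy$ on the left by $w$ gives $wy=wyvawy$, hence $wy(1-vawy)=0$ and so $1-vawy\in rann(wy)=rann(c)$. Therefore $c(1-vawy)=0$, that is $cvawy=c$, and since $y=bt$ this reads $cvawbt=c$. Alternatively, one may simply invoke Theorem \ref{thm3.1}: condition (i), which $y$ satisfies by definition, is equivalent under the standing invertibility hypothesis to condition (ii), whose relation $cvawy=c$ is exactly what is wanted.

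The only substantive move is the factorization $y=b(r\,awy)$, which upgrades the mere membership $yv\in b\R$ into an honest left factor $b$ of $y$ by exploiting $y=yvawy$. Everything after that is bookkeeping: the claim that $bt$ is the hybrid inverse is immediate from $bt=y$, and the identity $c=cvawbt$ follows either from the short annihilator computation above or, as the hypothesis ``$v$ or $w$ invertible'' suggests the authors intend, from the equivalence in Theorem \ref{thm3.1}. I would therefore expect no genuine obstacle here, only the need to spot the factorization trick; it is worth remarking that the explicit computation of $c=cvawy$ does not itself seem to require the invertibility of $v$ or $w$, so that hypothesis appears to be present chiefly to license the citation of Theorem \ref{thm3.1}.
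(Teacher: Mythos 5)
Your computations are all valid, but what they establish is weaker than what the paper's proof establishes, and the difference is the actual point of the theorem. You take the hybrid inverse $y$ as given, factor it as $y=b(r\,awy)=:bt$ using $yv\in yv\R=b\R$ and $y=yvawy$, and then obtain $c=cvawbt$ from $1-vawy\in rann(wy)=rann(c)$. This produces \emph{one} element $t$ --- manufactured from $y$ itself --- such that $bt=a^{h,v,w}_{b,c}$ and $c=cvawbt$. The paper's proof runs in the opposite direction: it uses Lemma \ref{Th4.7Drazin} to get $rann(cvawb)\subseteq rann(b)$ together with some $t$ solving $c=cvawbt$, and then, using nothing about $t$ beyond the equation $c=cvawbt$, it checks the conditions of Theorem \ref{thm3.1}(ii) for $y=bt$: from $cvawb=cvawb\,tvawb$ one gets $1-tvawb\in rann(cvawb)\subseteq rann(b)$, hence $btvawb=b$; if $x\in rann(c)$ then $cvawbtx=cx=0$, so $tx\in rann(cvawb)\subseteq rann(b)$ and $wbtx=0$, hence $rann(c)\subseteq rann(wbt)$; and $btv\R\subseteq b\R$ is trivial. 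Theorem \ref{thm3.1} --- this is where the invertibility of $v$ or $w$ is consumed --- then says that $bt$ is the hybrid inverse.

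Consequently the paper proves that \emph{every} solution $t$ of the single equation $c=cvawbX$ satisfies $bt=a^{h,v,w}_{b,c}$, which is a genuine recipe for constructing the hybrid inverse: solve that equation in $\R$ and multiply any solution on the left by $b$. This stronger reading is the one the statement intends: $t$ is declared among the hypotheses, so it is a given element constrained only by $c=cvawbt$, not one you get to choose. Your proof cannot deliver this: your $t=r\,awy$ presupposes knowledge of $y$, and nothing in your argument excludes a solution $t'$ of $c=cvawbt'$ with $bt'\neq y$; ruling that out requires essentially the paper's computation, in particular the annihilator inclusion $rann(cvawb)\subseteq rann(b)$ supplied by Lemma \ref{Th4.7Drazin}. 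Your closing observation --- that invertibility of $v$ or $w$ seems unnecessary --- is accurate only for the weaker existential statement you proved, and should be read as a symptom that you proved a different statement rather than as evidence that the hypothesis is redundant: in the paper's argument that hypothesis is exactly what lets Theorem \ref{thm3.1} upgrade the four verified conditions to the definition of the hybrid inverse. (In fairness, the later application of this theorem in the proof of Theorem \ref{thm3.5} uses only the existential form, so your argument would suffice for that particular purpose.)
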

\begin{proof}
Let $a$ has a hybrid $(v,w)$-weighted $(b,c)$-inverse. Then by Lemma \ref{Th4.7Drazin}, 
\begin{center}
   $c=cvawbt$ for some $t\in\R$. 
\end{center}
 Let $y=bt$. Now we will claim that $y$ is the hybrid $(v,w)$-weighted $(b,c)$-inverse of $a$. Clearly, $yv\R=btv\R \subseteq b\R$. 
 Using  $c=cvawbt$, we obtain $cvawb=cvawbtvawb$. Thus $(1-tvawb)\in rann(cvawb) \subset rann(b)$ by Lemma \ref{Th4.7Drazin}. Hence 
\begin{center}
  $b=btvawb=yvawb.$  
\end{center}
For $x\in rann(c)$, we have $cvawbtx=cx=0$. Which yields $tx\in rann(cvawb)$. Further, by Lemma \ref{Th4.7Drazin},  $tx\in rann(b)$ and consequently $wyx=wbtx=0$. Therefore, 
\begin{center}
    $rann(c) \subseteq rann(wy)$. 
\end{center}
By Theorem \ref{thm3.1}, we get $y=bt$ is the hybrid $(v,w)$-weighted $(b,c)$-inverse of $a$. 
\end{proof}

We now present a few necessary and sufficient condition for hybrid $(v,w)$-weighted $(b,c)$.

\begin{theorem}\label{thm3.5}
Let $a,b,c,v,w\in \R$ with either $v$ or $w$ be invertible. Then $a^{h,v,w}_{b,c}$ exists if and only if $\R=vawb\R\oplus rann(c)$ and $rann(vaw)\cap b\R=0$. 
\end{theorem}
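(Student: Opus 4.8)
The plan is to prove both implications through Lemma \ref{Th4.7Drazin}, which characterizes the existence of $a^{h,v,w}_{b,c}$ by the two algebraic conditions $c\in cvawb\R$ and $rann(cvawb)\subseteq rann(b)$. Thus it suffices to show that these two conditions together are equivalent to the pair $\R=vawb\R\oplus rann(c)$ and $rann(vaw)\cap b\R=0$. Throughout I will exploit that $vawb\R$, $rann(c)$, $b\R$ and $rann(vaw)$ are right ideals, so that a decomposition of $1$ propagates to all of $\R$ by right multiplication.

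For sufficiency, I would assume the two summand conditions. Writing $1=vawbr+k$ with $k\in rann(c)$ from the direct sum, right-multiplying $c$ gives $c=c\cdot 1=cvawbr\in cvawb\R$. For the annihilator inclusion, given $x$ with $cvawbx=0$, I run a two-step chase: first $c(vawbx)=0$ places $vawbx\in vawb\R\cap rann(c)=\{0\}$, so $vawbx=0$; then $vaw(bx)=0$ places $bx\in rann(vaw)\cap b\R=\{0\}$, so $bx=0$, i.e. $x\in rann(b)$. Lemma \ref{Th4.7Drazin} then yields existence.

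For necessity, suppose $a^{h,v,w}_{b,c}=y$ exists, so $yvawy=y$, $yv\R=b\R$ and $rann(wy)=rann(c)$, while Lemma \ref{Th4.7Drazin} supplies $c=cvawbt$ and $rann(cvawb)\subseteq rann(b)$. From $c=cvawbt$ I get $1-vawbt\in rann(c)$, whence $z=1\cdot z=vawbt\,z+(1-vawbt)z\in vawb\R+rann(c)$ for every $z$; directness $vawb\R\cap rann(c)=\{0\}$ follows by the same annihilator chase as above using $rann(cvawb)\subseteq rann(b)$. The remaining condition $rann(vaw)\cap b\R=\{0\}$ is the only place where the inverse $y$ is genuinely used: for $x\in rann(vaw)\cap b\R=yv\R$, I write $x=yvs$; then $yvawx=y(vawx)=0$, while simultaneously $yvawx=(yvawy)vs=yvs=x$, forcing $x=0$.

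The main obstacle I expect is bookkeeping rather than depth: one must respect the right-module structure (decomposing $1$ and multiplying on the correct side so the summands land inside the right ideals) and organize the two-stage annihilator chase, which is exactly where both hypotheses are consumed. The one genuinely inverse-specific maneuver is the collapse $x=yvawx=0$ resting on the idempotent-type identity $yvawy=y$. The standing hypothesis that $v$ or $w$ be invertible is not needed for this route through Lemma \ref{Th4.7Drazin}; it is retained for uniformity with Theorems \ref{thm3.1} and \ref{thm3.41}, through which an alternative proof could instead be routed.
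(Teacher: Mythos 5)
Your proof is correct, and it diverges from the paper's in the necessity direction in a way worth noting. The paper's sufficiency argument is essentially identical to yours (decompose $1=vawbm+n$, then run the two-stage chase $cvawbx=0\Rightarrow vawbx\in vawb\R\cap rann(c)=0\Rightarrow bx\in rann(vaw)\cap b\R=0$, and invoke Lemma \ref{Th4.7Drazin}). For necessity, however, the paper first routes through Theorem \ref{thm3.41} to exhibit the hybrid inverse in the concrete form $y=bt$ with $c=cvawbt$, then through Theorem \ref{thm3.1} to extract the identities $btvawb=b$ and $rann(wbt)=rann(c)$, which it uses both for directness of the sum and for $rann(vaw)\cap b\R=0$; both of those theorems consume the hypothesis that $v$ or $w$ is invertible. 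You instead stay entirely inside Lemma \ref{Th4.7Drazin} and the defining properties of $y$: directness comes from $r=vawbu\in rann(c)\Rightarrow u\in rann(cvawb)\subseteq rann(b)\Rightarrow r=vaw(bu)=0$, and the condition $rann(vaw)\cap b\R=0$ comes from the neat collapse $x=yvs$, $0=yvawx=(yvawy)vs=x$ using $b\R=yv\R$ and $yvawy=y$. Your closing observation is accurate and is a genuine refinement: since Lemma \ref{Th4.7Drazin} is stated in the paper without any invertibility assumption, your route proves the theorem without the hypothesis that $v$ or $w$ be invertible, whereas the paper's proof as written genuinely depends on it through Theorems \ref{thm3.1} and \ref{thm3.41}. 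The trade-off is that the paper's route additionally displays an explicit hybrid inverse of the form $bt$, which it reuses elsewhere, while yours is more self-contained and more general.
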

\begin{proof}
Let $a$ has a hybrid $(v,w)$-weighted $(b,c)$-inverse. Then by Theorem  \ref{thm3.41}, $bt$ is the hybrid $(v,w)$-weighted $(b,c)$-inverse of $a$ satisfying $c=cvawbt$, where $t\in\R$. Subsequently $z:=(1-vawbt)\in rann(c)$.  For any $x\in\R$, we can write 
\begin{center}
  $x=1\cdot x=(z+vawbt)x=zx+vawbtx\in rann(c)+vawb\R$.   
\end{center}
Thus $\R=rann(c)+vawb\R$ since the reverse inclusion is trivial. If  $r\in rann(c)\cap vawb\R$, then $cr=0$ and $r=vawbu$ for some $u\in \R$. From Theorem \ref{thm3.1}, taking $y=bt$, we have 
\begin{equation}\label{eq1}
  rann(wbt)=rann(c) \mbox{ and } btvawb=b  
\end{equation}
 Which leads $wbtvawbu=wbtr=0$ and  $r=vaw(b)u=va(wbtvawbu)=0$. Hence $R=vawb\R\oplus rann(c)$. Next we will show that $rann(vaw)\cap b\R=0$. Let $h\in rann(vaw)\cap b\R$, then $vawh=0$ and $h=bk$ for some $k\in \R$. Which implies $vawbk=0$. Using second part of equation \eqref{eq1}, we have $h=(b)k=bt(vawbk)=0$. Thus $rann(vaw)\cap b\R=0$.\\
Conversely, let $\R=vawbR\oplus rann(c)$. Then $1=vawbm+n$ for some $m\in \R$ and $n\in rann(c)$. Further,
\begin{equation}\label{eq2}
    c=cvawbm+cn=cvawbm\in cvawb\R \mbox{ since }n\in rann(c).
\end{equation}
If $x\in rann(cvawb)$, then $cvawbx=0$ and hence $vawbx\in rann(c)\cap vawb\R=0$. Thus $bx\in rann(vaw)\oplus b\R=0$ since $rann(vaw)\cap b\R=0$.  Hence $x\in rann(b)$ and subsequently, we obtain 
\begin{equation}\label{eq3}
    rann(cvawb)\subseteq rann(b).
\end{equation}
In view of equations \eqref{eq2}, \eqref{eq3}, and Lemma \ref{Th4.7Drazin}, $a$ has a hybrid $(v,w)$-weighted $(b,c)$-inverse.
\end{proof}

 \begin{lemma}
Let $a,b,c,v,w\in \R$ with either $v$ or $w$ be invertible. Assume that $a^{h,v,w}_{b,c}$ exists. Then $a^{v,w}_{b,c}$ exists if and only if any one of the following is holds.
\begin{enumerate}[\rm(i)]
\item $cvawb$ is regular.
\item $c$ is regular.
\end{enumerate}
\end{lemma}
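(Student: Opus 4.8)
The plan is to exploit the near-identical existence criteria already available: Theorem~\ref{thm3.5} characterizes the existence of $a^{h,v,w}_{b,c}$ by the two conditions $\R=vawb\R\oplus rann(c)$ and $rann(vaw)\cap b\R=0$, whereas Theorem~\ref{thm3.9}(v) characterizes the existence of $a^{v,w}_{b,c}$ by exactly these two conditions \emph{together with} the regularity of $cvawb$. Since we assume that $a^{h,v,w}_{b,c}$ exists (and either $v$ or $w$ is invertible, as required by Theorem~\ref{thm3.5}), the two direct-sum conditions hold automatically. Reading off Theorem~\ref{thm3.9}(v) then gives, at once, that $a^{v,w}_{b,c}$ exists if and only if $cvawb$ is regular. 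This settles the equivalence with condition (i) and reduces the whole lemma to proving that (i) and (ii) are equivalent under the standing hypothesis.

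For (i)$\Rightarrow$(ii) there is nothing new to do: if $cvawb$ is regular then, by the previous paragraph, $a^{v,w}_{b,c}$ exists, and Proposition~\ref{pro3.7} forces $c$ to be regular. The substantive direction is (ii)$\Rightarrow$(i). Here I would first invoke Lemma~\ref{Th4.7Drazin}: the existence of $a^{h,v,w}_{b,c}$ yields $c\in cvawb\R$, say $c=cvawb\,s$ for some $s\in\R$. Writing $c^{-}$ for an inner inverse of $c$, the candidate inner inverse of $cvawb$ is $sc^{-}$, and the verification collapses to the chain
\begin{center}
$(cvawb)(sc^{-})(cvawb)=(cvawb\,s)\,c^{-}\,(cvawb)=cc^{-}cvawb=cvawb$,
\end{center}
where the identity $cvawb\,s=c$ is applied first (to simplify the left factor) and the regularity $cc^{-}c=c$ second. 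Hence $cvawb$ is regular whenever $c$ is, giving (ii)$\Rightarrow$(i).

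Assembling these pieces, the three assertions ``$a^{v,w}_{b,c}$ exists'', ``(i)'', and ``(ii)'' are pairwise equivalent, which is precisely the claim. The only genuinely nontrivial step is the last verification, namely constructing the explicit inner inverse $sc^{-}$ of $cvawb$ out of the regularity of $c$ and the factorization $c=cvawb\,s$ supplied by the hybrid inverse; everything else is direct bookkeeping against Theorems~\ref{thm3.5} and \ref{thm3.9} and Proposition~\ref{pro3.7}.
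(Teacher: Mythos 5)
Your proof is correct, but it reaches the conclusion by a genuinely different route than the paper. The paper argues at the level of elements throughout: for the equivalence with (i) it uses Lemma~\ref{lem3.6} to write $b=scvawb$, $c=cvawbt$ and manipulates these to exhibit the explicit inner inverse $tvaws$ of $cvawb$, and for the converse it uses Lemma~\ref{Th4.7Drazin} (namely $c\in cvawb\R$ and $rann(cvawb)\subseteq rann(b)$, which gives $b=bzcvawb\in\R cvawb$) and then invokes Lemma~\ref{lem3.6} again; for (ii) it likewise computes an explicit inner inverse $vawk$ of $c$ rather than citing Proposition~\ref{pro3.7}, and its converse derives $c=cvawbs$ from the decomposition $\R=vawb\R\oplus rann(c)$ of Theorem~\ref{thm3.5} before reducing to part (i). You instead short-circuit the whole (i)-equivalence by matching existence criteria: Theorem~\ref{thm3.5} says the hybrid inverse is exactly the two direct-sum/annihilator conditions, and Theorem~\ref{thm3.9}(v) says the full inverse is those two conditions plus regularity of $cvawb$, so under the standing hypothesis the only gap between hybrid and full existence is precisely condition (i). This makes that part pure bookkeeping and, arguably, exposes the real content of the lemma more clearly; the cost is reliance on the heavier Theorem~\ref{thm3.9}, where the paper's element-wise argument is self-contained and produces explicit inner inverses. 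Your remaining steps are sound and close to the paper's: the verification $(cvawb)(sc^{-})(cvawb)=(cvawbs)c^{-}cvawb=cc^{-}cvawb=cvawb$ is essentially the paper's computation for the converse of (ii), except you obtain $c=cvawbs$ directly from Lemma~\ref{Th4.7Drazin} rather than from Theorem~\ref{thm3.5}'s decomposition, and your appeal to Proposition~\ref{pro3.7} for (i)$\Rightarrow$(ii) cleanly replaces a computation the paper redoes by hand. There is no circularity in your citations, since Theorems~\ref{thm3.5} and \ref{thm3.9} and Proposition~\ref{pro3.7} are all established before this lemma.
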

\begin{proof}
(i) Let $a^{v,w}_{b,c}$ exists. Then $b\in \R cvawb$ and $c\in cvawb \R$. Which implies $b=scvawb$ and $c=cvawbt$ for some $s,t\in \R$. In addition, 
\begin{center}
  $b=scvawb=scvawbtvawb=btvawb$, and 
  $c=cvawbt=cvawscvawbt=cvawsc$. 
\end{center}
Now 
\begin{eqnarray*}
cvawb&=&cvaw(btvawb)=cvawbtvawb=cvaw(btvawb)tvawb\\
&=&cvawbtvaw(bt)vawb=cvawbtvaw(sc)vawb\\
&=&cvawb(tvaws)cvawb.
\end{eqnarray*}
Hence $cvawb$ is regular.

Conversely, let $cvawb$ be regular. Then there exist an element $z\in \R$ such that $cvawb=cvawbzcvawb$. Since $a$ has a  hybrid $(v,w)$-weighted $(b,c)$-inverse, by Lemma \ref{Th4.7Drazin}, we have $c\in cvawb \R$ and  $b=bzcvawb\in \R cvawb$ due to the fact that $1-zcvawb\in rann(cvawb)\subseteq rann(b)$. Hence by Lemma \ref{lem3.6} $a$ has a $(v,w)$-weighted $(b,c)$-inverse.

.\\
(ii) Let $a$ has a $(v,w)$-weighted $(b,c)$-inverse. Then by Lemma \ref{lem3.6}, we have $c\in cvawb \R$ and $b\in \R cvawb$. Which yields  $c=cvawbh$ and $b=kcvawb$ for some $h,k\in \R$. Now 
\begin{center}
 $c=cvaw(b)h=cvaw(kcvawbh)=cvawkc=c(vawk)c$.   
\end{center}
Hence $c$ is regular.

Conversely, let $c$ be regular and the hybrid $(v,w)$-weighted $(b,c)$-inverse of $a$ be exists. Then by Theorem \ref{thm3.5}, $\R=vawb \R\oplus rann(c)$ and subsequently $1=vawbs+t$ where $s\in \R$ and $t\in rann(c)$. Therefore, $c=cvawbs+ct=cvawbs$. Since $c$ is regular, there exist an element $x\in \R$ such that $c=cxc$. Now 
\begin{center}
    $cvawb=(c)vawb=(c)xcvawb=cvawbsxcvawb=cvawb(sx)cvawb$.
\end{center}
Thus $cvawb$ is regular. Hence by part (i), $a$ has a $(v,w)$-weighted $(b,c)$-inverse.
\end{proof}

We next present the following characterizations of hybrid $(v,w)$-weighted $(b,c)$ through annihilators.

\begin{theorem}\label{thm3.10}
Let $a,b,c,v,w\in \R$ with either $v$ or $w$ be invertible. If $a$ has a hybrid $(v,w)$-weighted $(b,c)$-inverse, then the following statements are holds:
\begin{enumerate}[\rm(i)]
\item $rann(vawb)=rann(b)$. 
\item If $rann(b)=rann(c)$, then $rann(vawbs)=rann(vawb)$, where $s\in \R$ satisfies $vawb=(vawb)^2s$.
\end{enumerate}
\end{theorem}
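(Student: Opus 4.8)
The plan is to treat the two parts separately, with part (i) functioning as the engine for part (ii). For part (i), I would first observe that $rann(b)\subseteq rann(vawb)$ is immediate, since $bx=0$ forces $vaw(bx)=0$. For the reverse inclusion I would invoke Theorem \ref{thm3.5}: because $a$ has a hybrid $(v,w)$-weighted $(b,c)$-inverse and either $v$ or $w$ is invertible, we have $rann(vaw)\cap b\R=0$. Then if $vawbx=0$, the element $bx$ lies in $rann(vaw)\cap b\R=0$, whence $bx=0$ and $x\in rann(b)$. This settles $rann(vawb)=rann(b)$.

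For part (ii), I would write $B:=vawb$ throughout and first assemble the structural facts. Combining the hypothesis $rann(b)=rann(c)$ with part (i) gives $rann(B)=rann(b)=rann(c)$, so the decomposition supplied by Theorem \ref{thm3.5} upgrades to $\R=B\R\oplus rann(B)$. Writing $1=Bm+r$ with $r\in rann(B)$ and left-multiplying by $B$ yields $B=B^2m$, which simultaneously confirms the existence of an $s$ with $vawb=(vawb)^2s$ and indicates that the two required inclusions will only need the identity $B=B^2s$ together with the directness $B\R\cap rann(B)=0$ (so the conclusion is insensitive to the particular choice of $s$).

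The heart of the argument is to show that $e:=Bs$ is idempotent. I would compute $(Bs)^2-Bs$ in two ways: on one hand it lies in $B\R$, since $(Bs)^2-Bs=B\big(sBs-s\big)$; on the other hand $B\big[(Bs)^2-Bs\big]=0$, because $B\cdot Bs=B^2s=B$ gives both $B(Bs)^2=B$ and $B(Bs)=B$. Hence $(Bs)^2-Bs\in B\R\cap rann(B)=0$, so $e=Bs$ is idempotent. With idempotency in hand, Proposition \ref{prop2.1} identifies $rann(e)=(1-e)\R$, and I would finish with a short module computation: the relation $Be=B$ shows $(1-e)\R\subseteq rann(B)$ while $e\R\subseteq B\R$ is clear, and then splitting an arbitrary element of $B\R$ (respectively of $rann(B)$) through the idempotent $e$ and using directness of $\R=B\R\oplus rann(B)$ forces $B\R=e\R$ and $rann(B)=(1-e)\R=rann(Bs)$.

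I expect the main obstacle to be the inclusion $rann(B)\subseteq rann(Bs)$; unlike its converse, which follows directly from $B=B^2s$, it is not visible from the defining identity alone. The device that overcomes it is the idempotency of $Bs$, which is exactly the point where the direct-sum decomposition $\R=B\R\oplus rann(B)$ — and thus the existence of the hybrid inverse together with the hypothesis $rann(b)=rann(c)$ — becomes indispensable.
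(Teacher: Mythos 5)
Your proof is correct, and part (i) coincides with the paper's argument essentially verbatim: both note $rann(b)\subseteq rann(vawb)$ is trivial and use Theorem \ref{thm3.5} to place $bx$ in $rann(vaw)\cap b\R=0$. For part (ii), you and the paper start from the same ingredients --- part (i), the hypothesis $rann(b)=rann(c)$, and the decomposition $\R=vawb\R\oplus rann(c)$ from Theorem \ref{thm3.5}, whose directness gives $vawb\R\cap rann(vawb)=0$ --- but the finish differs. The paper argues element-wise: for $x\in rann(vawb)$ it observes $vawbsx\in vawb\R\cap rann(vawb)=0$, yielding $rann(vawb)\subseteq rann(vawbs)$, and obtains the reverse inclusion directly from $vawb=(vawb)^2s$. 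You instead apply the intersection argument once, to the single element $(vawbs)^2-vawbs$, to conclude that $e=vawbs$ is idempotent, and then recover both inclusions simultaneously from Proposition \ref{prop2.1} (so $rann(vawbs)=rann(e)=(1-e)\R$) together with a splitting argument identifying $(1-e)\R=rann(vawb)$. Your route is slightly longer but yields extra structural information --- $vawb\R=vawbs\R$ and $rann(vawb)=(1-vawbs)\R$, i.e., $vawbs$ is precisely the idempotent realizing the direct sum --- and it mirrors the idempotent technique the paper itself uses (via equation \eqref{eq9} and Proposition \ref{prop2.1}) in its group-inverse theorem in Section 3. The paper's element chase is the more economical choice if one only wants the stated annihilator equality.
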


\begin{proof}
(i) It is trivial that $rann(b)\subseteq rann(vawb)$. Let $a$ has a hybrid $(v,w)$-weighted $(b,c)$-inverse. Then by Theorem \ref{thm3.5}, $rann(vaw)\cap b \R=0$. For $r\in rann(vawb)$, we have $br\in rann(vaw)$ and $br\in b \R$. Thus $br\in rann(vaw)\cap b \R=0$ and hence $r\in rann(b)$. Therefore, $rann(vawb)\subseteq rann(b)$. 

(ii) Using the condition $R=vawb \R\oplus rann(c)$  of Theorem \ref{thm3.5}, we have $1=vawbs+t$ for some $s\in \R$ and $t\in rann(c)=rann(b)$. Thus $b=bvawbs$ and $vawb=vawbvawbs=(vawb)^2s$. Let $x\in rann(vawb)$. Then $(vawb)^2sx=vawbx=0$. Now 
\begin{center}
$vawbsx\in rann(vawb)\cap vawb \R=rann(b)\cap vawb \R=rann(c)\cap vawb \R=0$.   
\end{center}
Therefore $x\in rann(vawbs)$ and hence $rann(vawb)\subseteq rann(vawbs)$. 

Conversely, let $z\in rann(vawbs)$. Then $vawbz=(vawb)^2sz=0$. Which implies $z\in rann(vawb)$.
Thus, $rann(vawbs)\subseteq rann(vawb)$ and hence $rann(vawbs)= rann(vawb)$, where  $s$ satisfies $vawb=(vawb)^2s$.
\end{proof}


The following result represent a necessary and sufficient condition for hybrid $(v,w)$-weighted $(b,c)$ inverse through group inverse. 

\begin{theorem}\label{thm11}
Let $a,b,c,v,w\in \R$ with either $v$ or $w$ be invertible. Assume that $rann(vawb)=rann(b)=rann(c)$. Then the hybrid $(v,w)$-weighted $(b,c)$-inverse of $a$ exists if and only of $vawb$ is group invertible. 
\end{theorem}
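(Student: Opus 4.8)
The plan is to route both implications through Theorem~\ref{thm3.5} after abbreviating $g:=vawb$, and to close the argument with the Hartwig--Jia criterion (Lemma~\ref{LemmaHar}): $g$ is group invertible if and only if $g\in g^{2}\R\cap\R g^{2}$. The standing hypothesis $rann(g)=rann(b)=rann(c)$ will be used repeatedly, in particular to dispose of the side condition $rann(vaw)\cap b\R=0$ of Theorem~\ref{thm3.5}. Indeed, if $h=bk\in b\R$ with $vawh=0$, then $gk=0$, so $k\in rann(g)=rann(b)$ and hence $h=bk=0$; thus $rann(vaw)\cap b\R=0$ holds automatically under the hypothesis, and that theorem reduces to the single requirement $\R=vawb\R\oplus rann(c)$.

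For the implication ``$\Leftarrow$'', I would start from the group inverse $g^{\#}$ and the idempotent $e:=gg^{\#}=g^{\#}g$. The defining identities give $ge=eg=g$, from which $rann(g)=rann(e)$ and $g\R=e\R$ follow in one line each. Proposition~\ref{prop2.1} turns the idempotency of $e$ into $rann(e)=(1-e)\R$, so that $\R=e\R\oplus(1-e)\R=g\R\oplus rann(g)=vawb\R\oplus rann(c)$. Together with the automatic side condition above, Theorem~\ref{thm3.5} then delivers the hybrid $(v,w)$-weighted $(b,c)$-inverse.

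For the implication ``$\Rightarrow$'', Theorem~\ref{thm3.5} gives $\R=g\R\oplus rann(g)$. Writing $1=gs+t$ with $gs\in g\R$ and $t\in rann(g)$, I would first extract the algebraic skeleton: left-multiplying by $g$ and using $gt=0$ yields $g=g^{2}s\in g^{2}\R$ (so in particular $ge=g$ with $e:=gs$); applying the decomposition to $gs$ and invoking uniqueness shows $e$ is idempotent; the element $(1-e)g=g-eg$ lies in both $g\R$ and $rann(g)$, hence is $0$, giving $eg=g$; and a parallel use of $g\R\cap rann(g)=0$ (or Theorem~\ref{thm3.10}(ii)) gives $rann(e)=rann(g)$. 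At this point only $g\in\R g^{2}$ remains.

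This last inclusion is the crux, because the available data are all right-annihilator conditions whereas $\R g^{2}$ is a left-handed statement; the device I would use is a cancellation inside the corner $e\R e$. Put $x_{0}:=ese$. Then $gx_{0}=e$ (using $ge=g$ and $gs=e$), whence $g(e-x_{0}g)=g-eg=0$, so $e-x_{0}g\in rann(g)=rann(e)$ and therefore $e(e-x_{0}g)=0$; since $ex_{0}=x_{0}$ this collapses to $x_{0}g=e$. Consequently $g=eg=x_{0}g^{2}\in\R g^{2}$. Combined with $g\in g^{2}\R$, Lemma~\ref{LemmaHar} shows $g=vawb$ is group invertible, which finishes the proof.
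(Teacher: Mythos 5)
Your proof is correct, and it deviates from the paper's in two places worth recording. In the forward direction both arguments share a skeleton: Theorem \ref{thm3.5} together with $rann(c)=rann(vawb)$ gives $1=gs+t$ with $t\in rann(g)$ (where $g=vawb$), hence $g=g^{2}s\in g^{2}\R$, and Lemma \ref{LemmaHar} closes the argument. But for the harder inclusion $g\in\R g^{2}$ the paper invokes Theorem \ref{thm3.10}(ii), i.e.\ $rann(vawbs)=rann(vawb)$, to conclude $g=gs^{2}g^{2}$, whereas you build the idempotent $e=gs$, extract $e^{2}=e$, $eg=ge=g$ and $rann(e)=rann(g)$ from the directness of the sum $\R=g\R\oplus rann(g)$, and then cancel inside the corner with $x_{0}=ese$ to get $x_{0}g=e$ and $g=x_{0}g^{2}$; all of these steps check out. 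Your route is self-contained (Theorem \ref{thm3.10} is never used) at the cost of a few extra lines, while the paper's is shorter because it recycles an auxiliary result already proved. The converse directions are genuinely different: the paper constructs the inverse explicitly as $y=b(vawb)^{\#}$ and verifies the four conditions of Theorem \ref{thm3.1}(ii), which has the added value of producing a formula for $a^{h,v,w}_{b,c}$ (recorded separately in Remark \ref{rem}); you instead verify the decomposition $\R=vawb\R\oplus rann(c)$ abstractly, via $e=gg^{\#}$ and Proposition \ref{prop2.1}, and let the ``if'' half of Theorem \ref{thm3.5} supply existence --- shorter and cleaner, but non-constructive. Finally, your preliminary observation that $rann(vaw)\cap b\R=0$ is automatic once $rann(vawb)=rann(b)$ is a nice simplification the paper never states explicitly; note it is genuinely needed only in your converse, since in the forward direction Theorem \ref{thm3.5} hands you that condition for free.
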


\begin{proof}
Let $a$ has a hybrid $(v,w)$-weighted $(b,c)$-inverse. Then by Theorem \ref{thm3.5}, we have $1=vawbs+t$ for some $s\in \R$ and $t\in rann(c)=rann(vawb)$. Which implies  
\begin{equation}\label{eq4}
 vawb=(vawb)^2s\in (vawb)^2 \R \mbox{ and }   (vawb)^2=(vawb)^2svawb.
\end{equation}
 Using the second part of equation \eqref{eq4} and Theorem \ref{thm3.5}, we obtain
\begin{center}
  $vawb-vawbsvawb\in rann(vawb)\cap vawb \R =rann(c)\cap vawb \R=0$.  
\end{center}
Thus 
\begin{equation}\label{eq5}
  vawb=vawbsvawb \mbox{ and }(vawb)^2=vawbs(vawb)^2.
\end{equation}
Applying equation \eqref{eq5} and Theorem \ref{thm3.10}, we have 
\begin{center}
    $vawb-s(vawb)^2\in rann(vawb) = rann(vawbs)$.
\end{center}
Further, $vawbs^2(vawb)^2=vawbsvawb=vawb$ and $vawb\in \R (vawb)^2$. Hence by Lemma \ref{LemmaHar}, $vawb$ is group invertible since $vawb\in (vawb)^2\R\cap \R (vawb)^2$.

Conversely, let $y=b(vawb)^{\#}$. Since $vawb=(vawb)^2(vawb)^{\#}$ and $rann(vawb)=rann(c)$, it follow that $c(1-vawb(vawb)^{\#})=0$ and 
\begin{center}
   $c=cvawb(vawb)^{\#}=cvawy$.
\end{center}
Similarly, by applying $rann(vawb)=rann(b)$, we obtain 
\begin{center}
   $b=b(vawb)^{\#}vawb=yvawb$.
\end{center}
The condition $yv\R\subseteq y\R\subseteq bR$ follows from $y=b(vawb)^{\#}$. Next we will show that $rann(c)\subseteq rann(wy)$. Let $x\in rann(c)=rann(b)$. Then $bx=0$. Now 
$wyx=b(vawb)^{\#}x=b(vawb)^{\#}(vawb)^{\#}vawbx=0$. Thus $x\in rann(wy)$ and hence $rann(c)\subseteq rann(wy)$. By Theorem \ref{thm3.1}, $y=b(vawb)^{\#}$ is the hybrid $(v,w)$-weighted $(b,c)$-inverse of $a$.
\end{proof}
\begin{remark}\label{rem}
If $a,b,c,v,w\in \R$ with either $v$ or $w$ be invertible. If $vawb$ is group invertible, then $b(vawb)^{\#}$ is the hybrid $(v,w)$-weighted $(b,c)$-inverse of $a$.
\end{remark}

\begin{corollary}\label{cor13}
Let $a,b,c,v,w\in \R$ with either $v$ or $w$ be invertible. Assume that $a^{h,v,w}_{b,c}$ exists. Then $rann(b)=rann(vawb)=rann(c)$ if and only of $vawb$ is group invertible.
\end{corollary}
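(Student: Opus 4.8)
The plan is to prove the two implications separately: the forward implication is essentially immediate from the earlier theorem, while the reverse implication carries the actual content.

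For the forward direction, suppose $rann(b)=rann(vawb)=rann(c)$. Since $a^{h,v,w}_{b,c}$ is assumed to exist, the triple equality of annihilators is precisely the standing hypothesis of Theorem \ref{thm11}, which asserts that under this hypothesis the hybrid inverse exists if and only if $vawb$ is group invertible. As the hybrid inverse does exist, Theorem \ref{thm11} at once gives that $vawb$ is group invertible.

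For the reverse direction, assume $vawb$ is group invertible and set $e:=vawb(vawb)^{\#}=(vawb)^{\#}vawb$, which is idempotent. First I would record that $rann(e)=rann(vawb)$: the inclusion $rann(vawb)\subseteq rann(e)$ follows from writing $e=(vawb)^{\#}vawb$, while $rann(e)\subseteq rann(vawb)$ follows from the identity $vawb\,e=vawb$. Since $vawb$ is group invertible, Remark \ref{rem} shows that $y:=b(vawb)^{\#}$ is a hybrid $(v,w)$-weighted $(b,c)$-inverse of $a$ (by uniqueness it equals $a^{h,v,w}_{b,c}$), so $y$ satisfies $rann(wy)=rann(c)$ and, via the equivalent formulation in Theorem \ref{thm3.1}, $c=cvawy$. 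The heart of the argument is to derive $rann(c)=rann(vawb)$ through two inclusions, each obtained by collapsing the factor $(vawb)^{\#}$ against $e$. For $rann(vawb)\subseteq rann(c)$, note $c=cvawy=cvawb(vawb)^{\#}=ce$, so $er=0$ forces $cr=c(er)=0$. For $rann(c)\subseteq rann(vawb)$, any $r\in rann(c)=rann(wy)$ satisfies $wb(vawb)^{\#}r=0$; multiplying on the left by $va$ gives $vawb(vawb)^{\#}r=er=0$, whence $r\in rann(e)=rann(vawb)$. Combining the two inclusions yields $rann(c)=rann(vawb)$, and since $a^{h,v,w}_{b,c}$ exists, Theorem \ref{thm3.10}(i) supplies $rann(vawb)=rann(b)$; chaining these equalities completes the reverse direction.

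The main obstacle is the idempotent bookkeeping around the group inverse: one must establish $rann(e)=rann(vawb)$ and the identity $c=ce$ so that the trailing factor $(vawb)^{\#}$ can be absorbed cleanly, and one must correctly invoke Remark \ref{rem} (together with uniqueness) to identify the hybrid inverse with $b(vawb)^{\#}$. The remaining manipulations are routine; in particular, left multiplication by $va$ in the second inclusion avoids any need to invert $w$ explicitly and so respects the hypothesis that only one of $v,w$ need be invertible.
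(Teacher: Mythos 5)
Your proof is correct and follows essentially the same route as the paper's: the annihilator-equalities-imply-group-invertibility direction is in both cases an immediate appeal to Theorem \ref{thm11}, and the converse direction in both cases identifies $a^{h,v,w}_{b,c}$ with $b(vawb)^{\#}$ via Remark \ref{rem} (and uniqueness) and then runs annihilator computations with the group inverse. The only difference is organizational: you prove $rann(c)=rann(vawb)$ directly through the idempotent $e=vawb(vawb)^{\#}$ and cite Theorem \ref{thm3.10}(i) for $rann(vawb)=rann(b)$, whereas the paper reproves $rann(vawb)=rann(b)$ inline from Theorem \ref{thm3.5} and instead establishes $rann(b)=rann(wy)=rann(c)$ by hand --- the same ingredients, packaged slightly differently, with your citation of Theorem \ref{thm3.10}(i) arguably the cleaner bookkeeping.
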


\begin{proof}
Let $vawb$ be group invertible and $y$ be the hybrid $(v,w)$-weighted $(b,c)$-inverse of $a$. Then  $rann(c)=rann(wy)$ and $y=b(vawb)^{\#}$ by Remark \ref{rem}. If $x\in rann(vawb)$, then by Theorem \ref{thm3.5}, $bx\in rann(vaw)\cap b \R=0$ and hence  $x\in rann(b)$. Thus $rann(vawb)=rann(b)$ since the reverse inclusion $rann(b)\subseteq rann(vawb)$ is obvious. Next we will show that $rann(b)=rann(wy)$. Let $z\in rann(b)$. Then $bz=0$ and consequently 
\begin{center}
 $wyz=wb(vawb)^{\#}z=wb(vawb)^{\#}(vawb)^{\#}vawbz=0$. 
\end{center}
Therefore,  $z\in rann(wy)$ and $rann(b)\subseteq rann(wy)$. If $x\in rann(wy)$, then $wyx=0$ and 
\begin{center}
    $vawbx=(vawb)^2(vawb)^{\#}x=vawbvawyx=0$.
\end{center}
Further, by Theorem \ref{thm3.5}, we obtain 
$bx\in rann(vaw)\cap b \R=0$. Thus $bx\in rann(b)$. Hence $rann(b)= rann(wy)=rann(c)$.

The converse part follows from Theorem \ref{thm11}.
\end{proof}


The following result present hybrid $(v,w)$-weighted inverse in the relationships with
 annihilators and $(v,w)$-weighted inverse of $a$ along $d \in \R$.

\begin{theorem}\label{thm3.15}
Let $a,d,v,w,y\in \R$ with either $v$ or $w$ invertible. Then the following statements are equivalent:
\begin{enumerate} [\rm(i)]
    \item $y$ is the  $(v,w)$-weighted inverse of $a$ along $d$.
    \item $yvawd=d=dvawy$, $\R wy\subseteq \R d$, and $lann(d)\subseteq lann(yv)$. 
    \item $yvawy=y$, $\R wy=\R d$, and $lann(yv)=lann(d)$.
    \item $yvawd=d=dvawy$, $yv \R\subseteq d \R$, and $rann(wy)=rann(d)$.
    \item $yvawy=y$, $yv \R=d \R$, and  $rann(wy)=rann(d)$.
    \item $y$ is the hybrid $(v,w)$-weighted $(d,d)$-inverse of $a$.
    \item $y$ is the $(v,w)$-weighted $(d,d)$-inverse of $a$.
\end{enumerate}    
\end{theorem}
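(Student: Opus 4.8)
The plan is to treat (i) as the hub of the equivalence and to lean on two structural facts, the first of which needs no invertibility at all. First, under (i) the one-sided inclusions automatically close up to equalities: from $yvawd=d$ we get $d=yv(awd)\in yv\R$, so $d\R\subseteq yv\R$, and together with $yv\R\subseteq d\R$ this gives $yv\R=d\R$; dually $dvawy=d$ forces $d\in\R wy$, hence $\R wy=\R d$. Second, once either of these equalities is available, $d$ is regular by a one-line substitution: writing $yv=d\sigma$ (possible since $yv\in yv\R=d\R$) and plugging into $d=yvawd$ gives $d=(d\sigma)(awd)=d(\sigma aw)d$, and symmetrically from $\R wy=\R d$ one writes $wy=\gamma d$ and obtains $d=dvawy=dva\gamma d=d(va\gamma)d$. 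The same two facts are available for free under (ii) and (iv): under (ii) the relation $dvawy=d$ upgrades $\R wy\subseteq\R d$ to $\R wy=\R d$, and under (iv) the relation $yvawd=d$ upgrades $yv\R\subseteq d\R$ to $yv\R=d\R$, so $d$ is regular in each case.

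For the forward direction I would prove (i)$\Rightarrow$(ii),(iii),(iv),(v),(vii). The annihilator statements come straight from Proposition \ref{prop14}: $yv\R=d\R$ gives $lann(yv)=lann(d)$ by part (ii), and $\R wy=\R d$ gives $rann(wy)=rann(d)$ by part (i). The one genuinely new ingredient is the outer equation $yvawy=y$ demanded by (iii), (v) (and by (vii) through Theorem \ref{thm3.4}), and this is exactly where the hypothesis that $v$ or $w$ is invertible is used. If $v$ is invertible, then $yv=d\alpha$ gives $y=d\alpha v^{-1}$ and $yvawy=(yvawd)\alpha v^{-1}=d\alpha v^{-1}=y$; if $w$ is invertible, then $wy=\gamma d$ gives $y=w^{-1}\gamma d$ and $yvawy=w^{-1}\gamma(dvawy)=w^{-1}\gamma d=y$. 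With $yvawy=y$, $yv\R=d\R$ and $\R wy=\R d$ in hand, (i)$\Rightarrow$(vii) is immediate from Theorem \ref{thm3.4}(ii) taken with $b=c=d$, and (i)$\Rightarrow$(iii),(v) then follow. The equivalence (v)$\Leftrightarrow$(vi) costs nothing: (v) \emph{is} the defining list of the hybrid $(v,w)$-weighted $(d,d)$-inverse in Definition \ref{defhyb}.

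For the reverse direction I would bring each of (ii),(iii),(iv),(v),(vii) back to (i). In every case the pair $yvawd=d$ and $dvawy=d$ is either given or cheaply recovered: under (iii) write $d=\delta wy$ so that $dvawy=\delta w(yvawy)=\delta wy=d$, and use $yvawyv=yv$ with $lann(yv)=lann(d)$ to get $yvawd=d$; under (v) the identity $wy(1-vawy)=0\in rann(wy)=rann(d)$ gives $dvawy=d$, while right-multiplying $yvawy=y$ by $v$ and using $d\in yv\R$ gives $yvawd=d$. Having $d$ regular by the computation above, the missing one-sided inclusions of (i) are produced by Proposition \ref{prop14}(iii),(iv): $rann(d)\subseteq rann(wy)$ plus regularity yields $\R wy\subseteq\R d$, and $lann(d)\subseteq lann(yv)$ plus regularity yields $yv\R\subseteq d\R$. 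Finally (vii)$\Rightarrow$(v) is just Theorem \ref{thm3.4}(ii) followed by Proposition \ref{prop14}(i). Assembling these implications closes the loop (i)$\Leftrightarrow$(ii)$\Leftrightarrow$(iii)$\Leftrightarrow$(iv)$\Leftrightarrow$(v)$\Leftrightarrow$(vi)$\Leftrightarrow$(vii).

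I expect the main obstacle to be the outer equation $yvawy=y$. The ``inverse along $d$'' data in (i) controls $y$ only through the products $yv$ and $wy$, and without collapsing one of these via the invertibility of $v$ or $w$ there is no evident cancellation that recovers $y$ itself; this is precisely why the invertibility hypothesis is indispensable in the forward implications. By contrast the regularity of $d$, although it recurs in every reverse implication, is a routine by-product of the ideal equalities, and its sole purpose is to license the passage from annihilator inclusions back to ideal inclusions in Proposition \ref{prop14}(iii),(iv).
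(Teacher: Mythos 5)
Your proposal is correct; every implication checks out, and the core ingredients are the same ones the paper uses (Proposition \ref{prop14} to pass between ideal equalities and annihilator equalities, regularity of $d$ obtained by the substitution $yv=d\sigma$ or $wy=\gamma d$ into $yvawd=d=dvawy$, invertibility of $v$ or $w$ to recover the outer equation $yvawy=y$, and Theorem \ref{thm3.4} with $b=c=d$ to handle the $(v,w)$-weighted $(d,d)$-inverse). What differs is the logical architecture. The paper proves a single cycle (i)$\Rightarrow$(ii)$\Rightarrow$(iii)$\Rightarrow$(iv)$\Rightarrow$(v)$\Leftrightarrow$(vi)$\Rightarrow$(vii)$\Rightarrow$(i), burying the invertibility hypothesis inside the chain steps (ii)$\Rightarrow$(iii) and (iv)$\Rightarrow$(v), and proving (vi)$\Rightarrow$(vii) by directly exhibiting the memberships $y\in yv\R d\cap d\R wy$; you instead use (i) as a hub, proving (i)$\Rightarrow$(ii),(iii),(iv),(v),(vii) and each converse separately, and you reach (vii) through Theorem \ref{thm3.4}(ii) rather than by hand. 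The cycle is more economical (eight implications versus your dozen), but your decomposition buys two things the paper's proof obscures: it isolates the invertibility of $v$ or $w$ to exactly one place, the derivation of $yvawy=y$ in the forward direction (and you correctly flag that all five reverse implications back to (i) are invertibility-free), and it treats the two invertibility cases symmetrically, whereas the paper's step (ii)$\Rightarrow$(iii) writes only the $w^{-1}$ computation and leaves the $v$-invertible case implicit. Your two ``structural facts'' (one-sided inclusions upgrade to equalities under the product relations, and regularity of $d$ then follows in one line) are also exactly the computations the paper performs inline, just organized once instead of repeated in each step.
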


\begin{proof}
(i)$\Rightarrow$(ii):  The proof is follows from the definition of $(v,w)$-weighted inverse of $a$ along $d$ and Proposition \ref{prop14} (ii).\\
(ii)$\Rightarrow$(iii): Let $\R wy \subseteq \R d$. Then $wy=sd$ for some $s\in\R$. Pre-multiplying $d=dvawy$ by $w^{-1}s$, we obtain
\begin{center}
    $y=w^{-1}sd=w^{-1}sdvawy=yvawy$.
\end{center}
From $d=dvawy$, we have $\R d\subseteq \R wy$ and hence $\R wy=\R d$. Next we will show that $lann(yv)\subseteq lann(d)$. If $z\in lann(yv)$, then by applying $yvawd=d$, we obtain
\begin{center}
 $zd=z(yvawd)=(zyv)awd=0$.   
\end{center} 
Thus $lann(yv)\subseteq lann(d)$ and consequently $lann(d)=lann(yv)$.\\
(iii)$\Rightarrow$(iv): Let $y=yvawy$. Then $(1-yvaw)\in lann(y)\subseteq lann(yv)=lann(d)$. Thus $d=yvawd$. From $\R d=\R wy$, we have  
\begin{equation}\label{eq6}
   d=swy \mbox{ and } wy=td\mbox{ for some }s,t\in \R.
\end{equation}
Pre-multylying $yvawy=y$ by $sw$, we obtain 
$d=sw(y)=(swy)vawy=dvawy$. The condition $rann(wy)=rann(d)$ follows from Proposition \ref{prop14} (i). Using the second part of equation \eqref{eq6}, we get $d$ is regular since 
\begin{center}
    $d=dvawy=d(vat)d$.
\end{center}
Hence by Proposition \ref{prop14} (iv), $yv \R\subseteq d \R$. \\
(iv)$\Rightarrow$(v): The proof is similar to (ii)$\Rightarrow$(iii).\\
(v)$\Leftrightarrow$(vi): This part is trivial and follows from the definition.\\
(vi)$\Rightarrow$(vii): Let $y$ be the hybrid $(v,w)$-weighted $(d,d)$-inverse of $a$. Then $yvawy=y$,  $yvR=dR$, and $rann(wy)=rann(d)$.  To establish the result, it is sufficient to show 
\begin{center}
  $yvawd=d=dvawy$ and $y\in yv\R d\cap d\R wy$.   
\end{center}
Let $y-yvawy$. Then $(1-vawy)\in rann(y)\subseteq rann(wy)=rann(d)$. Thus $d=dvawy$. From $yv \R=d \R$, we have $d=yvs$ and $yv=dt$ for some $s,t\in \R$. Therefore,
\begin{center}
$y=yvawy=d(ta)wy\in d \R wy$,   $d=yvs=yvawyvs=yvawd$ and $d=d(taw)d$.
\end{center}
Hence $d$ is regular and by Proposition \ref{prop14} (iii), we obtain $\R wy\subseteq \R d$. Which implies $wy=zd$ for some $z\in \R$ and  $y=yvawy=yv(az)d\in yv \R d$. Hence $y$ is the $(v,w)$-weighted $(d,d)$-inverse of $a$.\\
(vii)$\Rightarrow$(i): Let $y$ be the $(v,w)$-weighted $(d,d)$-inverse of $a$. Then $yvawd=d=dvawy$, and $y=dswy=yvtd$ for some $s,t\in \R$. To establish the result, it is enough to show $yv \R\subseteq d \R$ and $\R wy\subseteq \R d$. Since $yv=d(swyv)=ds_1$ and $wy=wyvtd=t_1d$ for some $s_1=swyv\in \R$ and $t_1=wyvt\in \R$, it follows that $yv\R\subseteq d\R$ and $\R wy\subseteq\R d$. Hence by Definition \ref{walongd}, $y$ is the $(v,w)$-weighted inverse of $a$ along $d$.
\end{proof}

In view of Theorem \ref{thm3.15}, and taking $b=c=d$ in Theorem \ref{thm3.9}, we obtain the following result as a corollary.

\begin{corollary}
Let $a,d,v,w\in \R$ with either $v$ or $w$ be invertible. Then the following statements are equivalent:
\begin{enumerate}[\rm(i)]
\item $a$ has a $(v,w)$-weighted inverse along $d$.
\item $d$ is regular, $\R=\R dvaw\oplus lann(d)$, and $lann(vaw)\cap \R d=0$.
\item $\R=\R dvaw\oplus lann(d)$, $lann(vaw)\cap \R d=0$ and $dvawd$ is regular.
\item $d$ is regular, $\R=vawd \R\oplus rann(d)$, and  $rann(vaw)\cap d \R=0$.
\item $\R=vawd \R\oplus rann(d)$, $rann(vaw)\cap d \R=0$ and $dvawd$ is regular.
\end{enumerate}
\end{corollary}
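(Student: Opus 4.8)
The plan is to reduce the statement entirely to two results already established, namely Theorem~\ref{thm3.15} and Theorem~\ref{thm3.9}. The key observation is that the $(v,w)$-weighted inverse along $d$ and the $(v,w)$-weighted $(d,d)$-inverse are literally the same object, so existence of one is equivalent to existence of the other; once this is in hand, the four decomposition conditions are exactly what Theorem~\ref{thm3.9} yields in the special case $b=c=d$.

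First I would invoke Theorem~\ref{thm3.15}. Since either $v$ or $w$ is invertible, its hypotheses are satisfied, and the equivalence of items (i) and (vii) there gives: for any $y\in\R$, $y$ is the $(v,w)$-weighted inverse of $a$ along $d$ if and only if $y$ is the $(v,w)$-weighted $(d,d)$-inverse of $a$. Passing to the corresponding existence statements shows that item (i) of the corollary holds if and only if $a$ has a $(v,w)$-weighted $(d,d)$-inverse.

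Next I would apply Theorem~\ref{thm3.9} under the substitution $b=c=d$. That theorem asserts the equivalence of ``$a$ has a $(v,w)$-weighted $(b,c)$-inverse'' with each of its conditions (ii)--(v). Setting $b=c=d$ turns condition (ii) into ``$d$ is regular, $\R=\R dvaw\oplus lann(d)$, and $lann(vaw)\cap\R d=0$'', condition (iii) into ``$\R=\R dvaw\oplus lann(d)$, $lann(vaw)\cap\R d=0$ and $dvawd$ is regular'', and correspondingly conditions (iv) and (v) become the right-ideal versions appearing in the corollary, with $cvawb$ specializing to $dvawd$. Chaining these equivalences with the identification from the previous step establishes all five equivalences simultaneously.

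There is essentially no genuine obstacle, since the corollary is a direct specialization; the only point requiring care is to confirm that the invertibility hypothesis on $v$ or $w$ — needed to apply Theorem~\ref{thm3.15} — is part of the corollary's hypotheses (it is), while Theorem~\ref{thm3.9} imposes no such restriction and so applies unconditionally after the substitution. I would also note that the regularity of $d$ appears in (ii) and (iv) but not in (iii) and (v); this matches Theorem~\ref{thm3.9} verbatim, where regularity of $dvawd$ replaces the separate regularity of $d$, so no further verification is needed.
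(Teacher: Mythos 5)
Your proposal is correct and takes exactly the route the paper itself uses: the paper states this corollary with no separate proof beyond the remark that it follows ``in view of Theorem~\ref{thm3.15}, and taking $b=c=d$ in Theorem~\ref{thm3.9},'' which is precisely your two-step reduction (identify the inverse along $d$ with the $(d,d)$-inverse via Theorem~\ref{thm3.15}(i)$\Leftrightarrow$(vii), then specialize Theorem~\ref{thm3.9}). Your additional remarks on where the invertibility hypothesis is actually needed and on the placement of the regularity conditions are accurate and add nothing beyond what the specialization already guarantees.
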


The relation between $(v,w)$-weighted inverse along $d \in \R$ and group inverse of an element is discussed in the next result.
\begin{corollary}
Let $a,d,v,w\in \R$. Then $a^{v,w}_{\parallel d}$ exists if and only if $vawd$ is group invertible and $rann(vawd)= rann(d)$.
\end{corollary}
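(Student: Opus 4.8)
The plan is to reduce this statement about the $(v,w)$-weighted inverse along $d$ to the already-developed theory of the hybrid $(v,w)$-weighted $(d,d)$-inverse, by exploiting the equivalence (i)$\Leftrightarrow$(vi) of Theorem \ref{thm3.15} together with the group-inverse criterion of Theorem \ref{thm11}, both applied with $b=c=d$. Throughout I would keep the standing hypothesis that either $v$ or $w$ is invertible, since the two cited results require it; the whole argument is then a matter of matching hypotheses and transporting a single element $y$ back and forth.

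For the forward implication I would start from an element $y=a^{v,w}_{\parallel d}$, so that $yvawd=d=dvawy$, $\R wy\subseteq\R d$ and $yv\R\subseteq d\R$ by Definition \ref{walongd}. By Theorem \ref{thm3.15} (i)$\Rightarrow$(vi) this same $y$ is the hybrid $(v,w)$-weighted $(d,d)$-inverse of $a$, so the hybrid inverse exists. The key intermediate fact I would then verify by hand is $rann(vawd)=rann(d)$: the inclusion $rann(d)\subseteq rann(vawd)$ is immediate since $dx=0$ forces $vawdx=vaw(dx)=0$, and for the reverse inclusion, if $vawdx=0$ then $dx=(yvawd)x=y(vawdx)=0$ using $d=yvawd$. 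With $rann(vawd)=rann(d)=rann(d)$ established, the hypothesis of Theorem \ref{thm11} (taken with $b=c=d$) is met, and since the hybrid $(d,d)$-inverse exists, that theorem yields that $vawd$ is group invertible. This delivers both required conclusions.

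For the converse, suppose $vawd$ is group invertible and $rann(vawd)=rann(d)$. Then $rann(vawd)=rann(d)=rann(d)$ is exactly the hypothesis of Theorem \ref{thm11} with $b=c=d$, and the group invertibility of $vawd$ gives, by the ``$\Leftarrow$'' direction of that theorem, the existence of the hybrid $(v,w)$-weighted $(d,d)$-inverse of $a$. Feeding this into Theorem \ref{thm3.15} via (vi)$\Rightarrow$(i) produces an element $y$ that is the $(v,w)$-weighted inverse of $a$ along $d$, so $a^{v,w}_{\parallel d}$ exists.

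The bookkeeping is mostly routine once these two machines are in place; the one step that genuinely needs care is the direct verification of $rann(vawd)=rann(d)$ in the forward direction, since it is precisely this identity that unlocks the hypothesis of Theorem \ref{thm11}. I would also double-check that the single element $y$ transported across Theorem \ref{thm3.15} is consistent in both directions, as the equivalences there are stated for a fixed $y$, so that ``existence'' really passes back and forth without any separate uniqueness argument beyond what is already recorded for these inverses.
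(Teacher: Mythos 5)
Your proposal is correct and follows essentially the same route as the paper: reduce to the hybrid $(v,w)$-weighted $(d,d)$-inverse via Theorem \ref{thm3.15}, establish $rann(vawd)=rann(d)$ (your direct computation with $d=yvawd$ is exactly the content of the paper's appeal to Proposition \ref{prop14}(i)), and invoke the group-inverse criterion for the hybrid inverse, where your use of Theorem \ref{thm11} is interchangeable with the paper's citation of Corollary \ref{cor13}, whose relevant direction is itself deduced from Theorem \ref{thm11}. Your explicit remark that the invertibility of $v$ or $w$ must be carried as a standing hypothesis is a fair point; the paper's own proof silently relies on it as well.
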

\begin{proof}
Let $y$ be the $(v,w)$-weighted inverse of $a$ along along $d$. Then by Theorem \ref{thm3.15}, $y$ is the hybrid $(v,w)$-weighted $(d,d)$-inverse of $a$ and $yvawd=d$. From the condition $yvawd=d$, we have $\R d\subseteq\R vawd$. Taking $wy=d$ and $d=vawd$ in Proposition \ref{prop14} (i), we obtain $rann(vawd)\subseteq rann(d)$. Hence $rann(vawd)= rann(d)$ since the reverse inclusion $rann(d)\subseteq rann(vawd)$ is trivial. Replacing $b$ and $c$ by $d$ in Corollary \ref{cor13}, we get 
$vawd$ is group invertible. The converse part follows from Theorem \ref{thm11}
\end{proof}

\section{Annihilator (v,w)-weighted (b,c)-inverse}

This section is devoted to the characterizations  of annihilator (v,w)-weighted (b,c)-inverses.
The very first result is represent an equivalent definition of annihilator  $(v, w)$-weighted  $(b, c)$-inverse, which is used to discuss a few result on the  reverse  order  law  and properties of this inverse. 

\begin{theorem}\label{thm4.1}
Let $a,b,c,v,w,y\in \R$ with either $v$ or $w$ invertible. Then the following statements are equivalent: 
\begin{enumerate}[\rm(i)]
    \item $yvawy=y$, $rann(wy)=rann(c)$, and $lann(yv)=lann(b)$.
    \item $yvawb=b$, $cvawy=c$, $rann(c)\subseteq rann(wy)$, and $lann(b)\subseteq lann(yv)$. 
\end{enumerate}
\end{theorem}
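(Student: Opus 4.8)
The plan is to follow the template of Theorem~\ref{thm3.1}, since the only structural change from the hybrid case is that the one-sided ideal condition $yv\R=b\R$ has been replaced by the annihilator condition $lann(yv)=lann(b)$. As there, the forward implication (i)$\Rightarrow$(ii) will be routine, whereas (ii)$\Rightarrow$(i) is where the invertibility hypothesis on $v$ or $w$ does the real work. Throughout I will use only the elementary containments $lann(y)\subseteq lann(yv)$ and $rann(y)\subseteq rann(wy)$, so no appeal to regularity or to Proposition~\ref{prop14} should be needed.

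For (i)$\Rightarrow$(ii), I would exploit the near-idempotent factor $yvaw$ arising from $yvawy=y$. Writing $yvawy=y$ as $(1-yvaw)y=0$ gives $1-yvaw\in lann(y)\subseteq lann(yv)=lann(b)$, so $(1-yvaw)b=0$, i.e. $b=yvawb$. Symmetrically, $y(1-vawy)=0$ gives $1-vawy\in rann(y)\subseteq rann(wy)=rann(c)$, hence $c=cvawy$. The two containments $rann(c)\subseteq rann(wy)$ and $lann(b)\subseteq lann(yv)$ are immediate from the equalities assumed in (i), which completes this direction.

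For (ii)$\Rightarrow$(i), the crux is to recover $yvawy=y$ from the weaker data, and this is the step that forces the case split on which of $v,w$ is invertible. From $yvawb=b$ I get $yvaw-1\in lann(b)\subseteq lann(yv)$, hence $(yvaw-1)yv=0$, that is $yvawyv=yv$; if $v$ is invertible, cancelling $v$ on the right yields $yvawy=y$. Dually, from $cvawy=c$ I get $vawy-1\in rann(c)\subseteq rann(wy)$, hence $wyvawy=wy$, and if $w$ is invertible, cancelling $w$ on the left again gives $yvawy=y$. Once $yvawy=y$ is established, the reverse annihilator inclusions are direct: if $wys=0$ then $cs=cvawys=0$ using $c=cvawy$, so $rann(wy)\subseteq rann(c)$; and if $zyv=0$ then $zb=zyvawb=0$ using $b=yvawb$, so $lann(yv)\subseteq lann(b)$. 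Combined with the inclusions already given in (ii), these yield the required equalities $rann(wy)=rann(c)$ and $lann(yv)=lann(b)$.

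I expect the main obstacle to be exactly the step producing $yvawy=y$ in (ii)$\Rightarrow$(i): the annihilator hypotheses deliver only the one-sided identities $yvawyv=yv$ and $wyvawy=wy$, and cancelling the trailing factor of $v$ (respectively the leading factor of $w$) is impossible in a general ring. This is precisely why the statement assumes that one of $v,w$ is invertible, and it is the only place that hypothesis is used; everywhere else the argument is a formal manipulation of left and right annihilators.
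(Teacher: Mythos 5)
Your proposal is correct and follows essentially the same argument as the paper's own proof: the forward direction via $1-yvaw\in lann(y)\subseteq lann(yv)=lann(b)$ and $1-vawy\in rann(y)\subseteq rann(wy)=rann(c)$, and the converse via the one-sided identities $yvawyv=yv$ and $wyvawy=wy$ followed by cancellation of the invertible factor $v$ or $w$, with the reverse annihilator inclusions read off from $b=yvawb$ and $c=cvawy$. Your closing remark correctly identifies that the invertibility hypothesis is used only at the cancellation step, exactly as in the paper.
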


\begin{proof}
(i)$\Rightarrow$(ii) Let $yvawy=y$. Then $(yvaw-1)\in lann(yv)=lann(b)$. This  yields $yvawb=b$. Similarly $cvawy=c$ follows from 
\begin{center}
    $(vawy-1)\in rann(wy)=rann(c)$.
\end{center}
Hence completes the proof since the range conditions are trivial.\\
(ii)$\Rightarrow$(i) Let $rann(c)\subseteq rann(wy)$ and $cvawy=c$. Then $(vawy-1)\in rann(c)\subseteq rann(wy)$. Thus  $wyvawy=wy$. Similarly from $lann(b)\subseteq lann(yv)$ and $yvawb=b$, we can obtain $yvawyv=yv$. Hence if either $v$ or $w$ is invertible then $yvawy=y$. Next we will claim that $rann(wy)\subseteq rann(c)$ and $lann(yv)\subseteq lann(b)$. For  $x\in rann(wy)$, we have $wyx=0$. Now $cx=cva(wyx)=0$. Thus $rann(wy)\subseteq rann(c)$. If $z\in lann(yv)$, then $zyv=0$. Further, $zb=(zyv)awb=0$. Hence  $lann(yv)\subseteq lann(b)$.
\end{proof}

With the help of Theorem \ref{thm4.1} (i), we present the following  property  of annihilator $(v,w)$-weighted $(b,c)$-inverse.

\begin{proposition}
For $i=1,2$, let $a_{i},b_{i},c_{i},v,w,y_{i}\in \R $ with both $v,w$ invertible and  $y_{i}={a_{i}}_{b,c}^{a,v,w}$. If $rc_{1}=c_{2}r$, $rva_{1}w=va_{2}wr$, and $rb_{1}=b_{2}r$  for any $r\in \R$, then $ry_{1}=y_{2}r$.
\end{proposition}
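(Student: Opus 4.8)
The plan is to abbreviate $P_i := va_iw$, so that the three intertwining hypotheses read $rP_1 = P_2r$, $\,rb_1 = b_2r$, and $\,rc_1 = c_2r$. By Definition~\ref{defann} together with Theorem~\ref{thm4.1}, each $y_i$ satisfies the equations $y_iP_iy_i = y_i$, $\,c_iP_iy_i = c_i$, $\,y_iP_ib_i = b_i$, as well as the weighted annihilator equalities $rann(wy_i) = rann(c_i)$ and $lann(y_iv) = lann(b_i)$. Writing $\delta := ry_1 - y_2r$, my goal is to prove $\delta = 0$, and I would achieve this by squeezing $\delta$ from the right using the $c_i$-data and from the left using the $b_i$-data, then meeting in the middle.

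First I would establish $c_2P_2\delta = 0$. Using $P_2r = rP_1$ and $c_2r = rc_1$ one computes $c_2P_2ry_1 = rc_1P_1y_1 = rc_1$ by $c_1P_1y_1 = c_1$, while $c_2P_2y_2r = c_2r = rc_1$ by $c_2P_2y_2 = c_2$; subtracting gives $c_2P_2\delta = 0$. Now comes the key manoeuvre: this says $P_2\delta \in rann(c_2) = rann(wy_2)$, hence $wy_2P_2\delta = 0$, and since $w$ is invertible I may cancel it to obtain $y_2P_2\delta = 0$. Combined with $y_2P_2y_2 = y_2$ this rearranges to the identity $y_2P_2ry_1 = y_2r$.

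Symmetrically, I would show $\delta P_1b_1 = 0$: here $y_2rP_1b_1 = y_2P_2b_2r = b_2r = rb_1$ by $y_2P_2b_2 = b_2$ and $rb_1 = b_2r$, whereas $ry_1P_1b_1 = rb_1$ by $y_1P_1b_1 = b_1$, so the difference vanishes. Then $\delta P_1 \in lann(b_1) = lann(y_1v)$, giving $\delta P_1y_1v = 0$, and cancelling the invertible $v$ yields $\delta P_1y_1 = 0$; together with $y_1P_1y_1 = y_1$ this rearranges to $ry_1 = y_2rP_1y_1$. Chaining the two conclusions then finishes the argument: $ry_1 = y_2rP_1y_1 = y_2(P_2r)y_1 = y_2P_2ry_1 = y_2r$, which is precisely $\delta = 0$.

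The main obstacle, and the reason both $v$ and $w$ are assumed invertible, is the passage from the derived relations $c_2P_2\delta = 0$ and $\delta P_1b_1 = 0$ to the genuinely useful cancellations $y_2P_2\delta = 0$ and $\delta P_1y_1 = 0$. The only leverage on $b_i$ and $c_i$ comes through the \emph{weighted} annihilator conditions $rann(wy_i) = rann(c_i)$ and $lann(y_iv) = lann(b_i)$, which are phrased for $wy_i$ and $y_iv$ rather than for $y_i$ itself; stripping off the weights $w$ and $v$ to land back on $y_i$ is exactly what the invertibility hypotheses deliver, and arranging the two one-sided reductions so that the surviving terms coincide is the delicate bookkeeping at the heart of the proof.
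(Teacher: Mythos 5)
Your proof is correct and takes essentially the same route as the paper's: both arguments use Theorem \ref{thm4.1} to get the product identities $y_iva_iwb_i=b_i$ and $c_iva_iwy_i=c_i$, push the hypotheses through to land in $lann(b_1)\subseteq lann(y_1v)$ and $rann(c_2)\subseteq rann(wy_2)$, cancel the invertible weights $v$ and $w$, and chain the two resulting one-sided identities through the common middle term $y_2rva_1wy_1$. The only differences are cosmetic bookkeeping — your $\delta$-formulation and the extra invocations of $y_iva_iwy_i=y_i$, where the paper reaches the same two identities slightly more directly.
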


\begin{proof}
Let $y_{i}={a_{i}}_{b,c}^{a,v,w}$.  Then by Theorem \ref{thm4.1}, we obtain $y_{2}va_{2}wb_{2}=b_{2}$ and $lann(b_{1})\subseteq lann(y_{1}v)$. Thus 
\begin{center}
  $rb_{1}=b_{2}r$=$y_{2}va_{2}wb_{2}r$=$y_{2}(va_{2}wr)b_{1}$=$y_{2}(rva_{1}w)b_{1}$.
\end{center}
Therefore, $(r-y_{2}rva_{1}w)\in lann(b_{1})\subseteq lann(y_{1}v)$. Which implies 
\begin{equation}\label{eq7}
 ry_{1}v=y_{2}rva_{1}y_{1}v   
\end{equation}
Similarly, we have 
\begin{center}
 $c_{2}r=rc_{1}=rc_{1}va_{1}wy_{1}=c_{2}rva_{1}wy_{1}$, and  
\end{center} 
  $(r-rva_{1}wy_{1})\in rann(c_{2})\subseteq rann(wy_{2})$. Thus 
    \begin{equation}\label{eq8}
    wy_{2}r=wy_{2}rva_{1}y_{1}.  
  \end{equation}
Using the invetibility of $v$ and $w$ in equation \eqref{eq7} and \eqref{eq8}, we get $ry_{1}=y_{2}r$. 
\end{proof}

In the similar manner, we have the following result for the $(v,w)$-weighted $(b,c)$-inverse.
\begin{corollary}
For $i=1,2$, let $a_{i},b_{i},c_{i},v,w,y_{i}\in \R $ and  $y_{i}$ be the $(v,w)$-weighted $(b_{i},c_{i})$-inverse of $a_{i}$. If $rc_{1}=c_{2}r$, $rva_{1}w=va_{2}wr$, and $rb_{1}=b_{2}r$  for any $r\in \R$, then $ry_{1}=y_{2}r$.
\end{corollary}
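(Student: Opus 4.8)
The plan is to follow the structure of the preceding Proposition, but the essential new difficulty is that here neither $v$ nor $w$ is assumed invertible, so the final cancellation of $v$ and $w$ used there is unavailable. I would instead exploit the two ideal-membership conditions built into Definition \ref{wbc}, which the plain $(v,w)$-weighted $(b,c)$-inverse enjoys but the annihilator inverse does not.

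First I would record the defining relations. Since each $y_i$ is the $(v,w)$-weighted $(b_i,c_i)$-inverse of $a_i$, Definition \ref{wbc} gives $y_iva_iwb_i=b_i$, $c_iva_iwy_i=c_i$, together with factorizations $y_i=b_is_iwy_i$ and $y_i=y_ivt_ic_i$ for suitable $s_i,t_i\in\R$. In particular $y_1v=b_1(s_1wy_1v)\in b_1\R$ and $wy_2=(wy_2vt_2)c_2\in\R c_2$; these two factorizations are precisely what will replace the missing invertibility at the end.

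For the left-hand computation I would mimic the Proposition: using $b_2=y_2va_2wb_2$ together with the hypotheses $b_2r=rb_1$ and $va_2wr=rva_1w$, one gets $rb_1=y_2rva_1wb_1$, hence $(r-y_2rva_1w)b_1=0$. Right-multiplying this by the element witnessing $y_1v\in b_1\R$ yields $(r-y_2rva_1w)y_1v=0$, that is $ry_1v=y_2rva_1wy_1v$; post-multiplying by $t_1c_1$ and using $y_1vt_1c_1=y_1$ gives
\[
ry_1=y_2rva_1wy_1.
\]
Symmetrically, from $c_1=c_1va_1wy_1$ and $c_2r=rc_1$ I obtain $c_2r=c_2rva_1wy_1$, so $(r-rva_1wy_1)\in rann(c_2)$; left-multiplying by the element witnessing $wy_2\in\R c_2$ gives $wy_2r=wy_2rva_1wy_1$, and pre-multiplying by $b_2s_2$ with $b_2s_2wy_2=y_2$ yields $y_2r=y_2rva_1wy_1$.

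Combining the two identities then gives $ry_1=y_2rva_1wy_1=y_2r$, as required. The only genuinely new step compared with the Proposition is the replacement of the invertibility-based cancellation of $v$ and $w$ by the factorizations $y_1v\in b_1\R$ and $wy_2\in\R c_2$; checking that these factorizations correctly reconstitute $y_1$ and $y_2$ after the annihilator arguments is the step I expect to require the most care, although it is routine given the ideal conditions of Definition \ref{wbc}.
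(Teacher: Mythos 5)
Your proof is correct and takes essentially the same approach as the paper: both arguments avoid any invertibility of $v$ or $w$ by invoking the ideal memberships $y_1\in b_1\R wy_1$ and $y_2\in y_2v\R c_2$ from Definition \ref{wbc}, combine them with $y_2va_2wb_2=b_2$, $c_1va_1wy_1=c_1$ and the intertwining hypotheses, and conclude by showing that $ry_1$ and $y_2r$ are both equal to $y_2rva_1wy_1$. The only (harmless) difference is organizational: you detour through $y_1v$ and $wy_2$ and then reconstitute $y_1,y_2$ using the second pair of factorizations $y_1=y_1vt_1c_1$, $y_2=b_2s_2wy_2$, whereas the paper works directly with the single factorizations $y_1=b_1ewy_1$ and $y_2=y_2vfc_2$ in one equational chain, so it needs only two witnesses where you use four.
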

\begin{proof}
We first note that, $(rva_{1}w)b_{1}=(va_{2}wr)b_{1}=va_{2}w(rb_{1})$. Similarly $rva_{1}wb_{1}=va_{2}wb_{2}r$ and $rc_{1}va_{1}w=c_{2}va_{2}wr$.

Since $y_{i}$ is the $(v,w)$-weighted $(b_{i},c_{i})$-inverse of $a_{i}$, we have $c_{1}va_{1}wy_{1}=c_{1}$ and $y_{2}a_{2}b_{2}=b_{2}$. Also we may write $y_{1}=b_{1}ewy_{1}$ and $y_{2}=y_{2}vfc_{2}$ for some $e,f\in \R$.

Now we find
\begin{center}
   $ry_{1}=r(b_{1}ewy_{1})=(rb_{1})ewy_{1}=(b_{2}r)ewy_{1}=(y_{2}va_{2}wb_{2})rewy_{1}=y_{2}(va_{2}wb_{2}r)ewy_{1}=y_{2}(rva_{1}wb_{1})ewy_{1}=y_{2}rva_{1}w(b_{1}ewy_{1})=y_{2}rva_{1}wy_{1}$.
\end{center}
and similarly 
\begin{center}
    $y_{2}r=(y_{2}vfc_{2})r=y_{2}vf(c_{2}r)=y_{2}vf(rc_{1})=y_{2}vfr(c_{1}va_{1}wy_{1})=y_{2}vf(rc_{1}va_{1}w)y_{1}=y_{2}vf(c_{2}va_{2}wr)y_{1}=(y_{2}vfc_{2})va_{2}wry_{1}=y_{2}(va_{2}wr)y_{1}=y_{2}(rva_{1}w)y_{1}$.

\end{center}
Hence  $ry_{1}=y_{2}r$.
\end{proof}

The next result concerning on the reverse order law for the annihilator $(v,w)$-weighted $(b,c)$-inverse.

\begin{theorem}
Let $s,t,b,c,v,w\in \R$ with $v$ or $w$ invertible and both $r^{a,v,w}_{b,c}$, $s^{a,v,w}_{b,c}$ exist. If $bvtw=vtwb$, and $cvsw=vswc$, then $(st)^{a,v,w}_{b,c}=t^{a,v,w}_{b,c}s^{a,v,w}_{b,c}$.
\end{theorem}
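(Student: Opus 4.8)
Write $y_1=t^{a,v,w}_{b,c}$ and $y_2=s^{a,v,w}_{b,c}$, and set $y=y_1y_2$. Since the annihilator $(v,w)$-weighted $(b,c)$-inverse is unique whenever it exists, it suffices to show that $y$ is an annihilator $(v,w)$-weighted $(b,c)$-inverse of the element $st$. The plan is to verify this through Theorem \ref{thm4.1}(ii), applied with $a$ replaced by $st$; that is, to establish
\[
y\,v(st)w\,b=b,\qquad c\,v(st)w\,y=c,\qquad rann(c)\subseteq rann(wy),\qquad lann(b)\subseteq lann(yv).
\]
Throughout I would use the single-weight relations furnished by Theorem \ref{thm4.1}(ii) for $y_1$ and $y_2$, namely $y_1vtwb=b$, $cvtwy_1=c$, $y_2vswb=b$, $cvswy_2=c$, together with the idempotent-type identities $y_1vtwy_1=y_1$, $y_2vswy_2=y_2$ and the annihilator equalities $lann(y_iv)=lann(b)$, $rann(wy_i)=rann(c)$.

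For the two annihilator inclusions I would exploit the hypothesis that $v$ or $w$ is invertible. If $v$ is invertible, then $zb=0$ forces $zy_1v=0$ (because $lann(y_1v)=lann(b)$), hence $zy_1=0$ and so $zy_1y_2v=0$, giving $lann(b)\subseteq lann(y_1y_2v)$ directly. Symmetrically, if $w$ is invertible, then starting from $rann(wy_2)=rann(c)$ one obtains $rann(c)\subseteq rann(wy_1y_2)$. In each case exactly one of the two inclusions comes for free; the remaining inclusion — $rann(c)\subseteq rann(wy_1y_2)$ when only $v$ is invertible, or $lann(b)\subseteq lann(y_1y_2v)$ when only $w$ is invertible — is the delicate one, and it is here that the commutativity relations $bvtw=vtwb$ and $cvsw=vswc$ must be brought to bear, since without them there is no way to transport the intermediate factor $y_2$ (respectively $y_1$) past the annihilator condition.

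The heart of the argument is the verification of the two weighted equations, and I expect this splicing step to be the main obstacle. The difficulty is that the product weight $v(st)w=vstw$ does not factor as a product of the individual weights $vtw$ and $vsw$, so the single-weight relations for $y_1$ and $y_2$ cannot simply be concatenated. To prove $y_1y_2v(st)wb=b$ I would begin from $b=y_2vswb$, substitute $b=y_1vtwb$, and then use $bvtw=vtwb$ to move the factor $vtw$ through $b$ so that the $s$- and $t$-weights can be recombined into $v(st)w$, collapsing the redundant occurrence of $y_2$ via $y_2vswy_2=y_2$; the dual computation, starting from $c=cvtwy_1$ and applying $cvsw=vswc$ together with $cvswy_2=c$, should yield $cv(st)wy_1y_2=c$. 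The commutativity relations must be applied in precisely the right order to reposition $b$ (resp.\ $c$) relative to the intermediate weight before the idempotent identities can be invoked, and this reconciliation of weights is the crux of the proof. Once all four conditions of Theorem \ref{thm4.1}(ii) are in place, that theorem shows $y=y_1y_2$ is the annihilator $(v,w)$-weighted $(b,c)$-inverse of $st$, and uniqueness gives $(st)^{a,v,w}_{b,c}=t^{a,v,w}_{b,c}s^{a,v,w}_{b,c}$.
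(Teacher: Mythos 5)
Your scaffolding (set $y=t^{a,v,w}_{b,c}s^{a,v,w}_{b,c}$, verify the four conditions of Theorem~\ref{thm4.1}(ii), conclude by uniqueness) agrees with the paper, but the step you yourself call the crux --- recombining the weights so as to get $yv(st)wb=b$ and $cv(st)wy=c$ with the plain product weight $vstw$ --- is only announced, never carried out, and in fact it cannot be carried out. The hypotheses $bvtw=vtwb$ and $cvsw=vswc$ let you slide $b$ past $vtw$ and $c$ past $vsw$, but they give no way to create or absorb the factor $wv$ that separates $vsw\cdot vtw$ from $vstw$. The equations you propose to prove are actually false under the stated hypotheses: take $\R=\mathbb{R}$, $b=c=s=t=w=1$ and $v=2$. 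Every hypothesis holds (the commutation relations trivially, $v$ invertible), and $t^{a,v,w}_{b,c}=s^{a,v,w}_{b,c}=\tfrac12$, so your candidate is $y=\tfrac14$; but the annihilator $(v,w)$-weighted $(1,1)$-inverse of $st=1$ is $\tfrac12$, since it must satisfy $yv(st)wb=b$, i.e.\ $2y=1$. So no ordering of the commutation moves will rescue the splicing step.

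What the paper's proof actually verifies is $yvswvtwb=b$ and $cvswvtwy=c$, i.e.\ it applies Theorem~\ref{thm4.1}(ii) with the \emph{composite} weight $vsw\cdot vtw$ (equivalently, it treats the ``product'' as the weighted composition $swvt$ rather than the plain ring product $st$ --- this is the only reading under which the theorem is true). Under that reading the commutation hypotheses do exactly what is needed, in two lines: writing $y_1=t^{a,v,w}_{b,c}$, $y_2=s^{a,v,w}_{b,c}$, one has $y_1y_2vswvtwb=y_1y_2vswbvtw=y_1bvtw=y_1vtwb=b$, and dually $cvswvtwy_1y_2=vswcvtwy_1y_2=vswcy_2=cvswy_2=c$. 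A second, smaller correction: your claim that one of the two annihilator inclusions is ``delicate'' and needs the commutativity relations (or invertibility of $v$ or $w$) is not right. Both inclusions follow from the idempotent identities alone, exactly as in the paper: for $z\in lann(b)=lann(y_1v)$ one gets $zy_1y_2v=(zy_1v)twy_1y_2v=0$ using $y_1=y_1vtwy_1$, and for $z\in rann(c)=rann(wy_2)$ one gets $wy_1y_2z=wy_1y_2vs(wy_2z)=0$ using $y_2=y_2vswy_2$.
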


\begin{proof}
Let $y=t^{a,v,w}_{b,c}r^{a,v,w}_{b,c}$. Then we have 
\begin{center}
    $yvswvtwb=t^{a,v,w}_{b,c}s^{a,v,w}_{b,c}vswvtwb$=$t^{a,v,w}_{b,c}s^{a,v,w}_{b,c}vswbvtw$=$t^{a,v,w}_{b,c}bvtw$=$t^{a,v,w}_{b,c}vtwb=b$.
\end{center}
  Similalry, we can show $cvswvtwy$=$cvswvtwt^{a,v,w}_{b,c}s^{a,v,w}_{b,c}=c$. From the definition \ref{defann}, we have $lann(b)\subseteq lann(t^{a,v,w}_{b,c}v)$ and 
    $rann(c)\subseteq rann(wt^{a,v,w}_{b,c})$. Now for any $z\in lann(b)$, we obtain $zt^{a,v,w}_{b,c}v=0$ and 
    \begin{center}
     $zyv=zt^{a,v,w}_{b,c}s^{a,v,w}_{b,c}v=zt^{a,v,w}_{b,c}vtwt^{a,v,w}_{b,c}s^{a,v,w}_{b,c}v=0$. 
  \end{center}
   Hence $lann(b)\subseteq lann(yv)$. Let $z\in rann(c)\subseteq rann(wt^{a,v,w}_{b,c})$. Then $wt^{a,v,w}_{b,c}z=0$. Now
   \begin{center}
     $wyz=wt^{a,v,w}_{b,c}s^{a,v,w}_{b,c}z$=$wt^{a,v,w}_{b,c}s^{a,v,w}_{b,c}vswt^{a,v,w}_{b,c}z=0$. 
   \end{center}
   Thus $rann(c)\subseteq rann(wy)$. Hence by Theorem \ref{thm4.1} (ii), we obtain $(st)^{a,v,w}_{b,c}=y=t^{a,v,w}_{b,c}s^{a,v,w}_{b,c}$. 
\end{proof}

\begin{corollary}
Let $s,t,b,c,v,w\in \R$, and both  $s^{v,w}_{b,c}$,  $t^{v,w}_{b,c}$ exists. If $cvsw=vswc$ and $bvtw=vtwb$, then $(st)^{v,w}_{b,c} =t^{v,w}_{b,c}s^{v,w}_{b,c}$.
\end{corollary}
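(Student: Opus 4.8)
The plan is to check directly that $y:=t^{v,w}_{b,c}s^{v,w}_{b,c}$ satisfies the three defining conditions of Definition~\ref{wbc} for the element $st$; since the $(v,w)$-weighted $(b,c)$-inverse is unique, this forces $(st)^{v,w}_{b,c}=y$. Write $y_s:=s^{v,w}_{b,c}$ and $y_t:=t^{v,w}_{b,c}$. Definition~\ref{wbc} applied separately to $s$ and to $t$ furnishes the absorption identities $y_svswb=b$, $cvswy_s=c$, $y_tvtwb=b$, $cvtwy_t=c$, together with the ideal memberships; in particular $y_t=b\,q\,wy_t$ for some $q\in\R$ (from $y_t\in b\R wy_t$) and $y_s=y_sv\,p\,c$ for some $p\in\R$ (from $y_s\in y_sv\R c$). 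These are the only facts about the two inverses that the argument will use, alongside the two commuting hypotheses.

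First I would establish the composite absorption identities, mirroring the proof of the preceding theorem. Sliding the block $vtw$ past $b$ via $bvtw=vtwb$ and then cancelling the inner inverse through $y_svswb=b$ gives
\begin{align*}
yvswvtwb &= y_ty_s\,vsw\,(vtwb)=y_ty_s\,vsw\,(bvtw)\\
&=y_t(y_svswb)\,vtw=y_t\,b\,vtw=y_t(vtwb)=b,
\end{align*}
the final equality being $y_tvtwb=b$. Symmetrically, pushing $vsw$ across $c$ with $cvsw=vswc$ and cancelling through $cvtwy_t=c$ and $cvswy_s=c$ yields $cvswvtwy=c$. These are exactly the two equational requirements for $y$ to be the $(v,w)$-weighted $(b,c)$-inverse of $st$.

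The step that is new relative to the annihilator theorem is verifying the ideal memberships $y\in b\R wy\cap yv\R c$, and here the work is in fact light: the outer factor supplies the left ideal and the inner factor the right ideal. From $y_t=b\,q\,wy_t$ I obtain $y=y_ty_s=b\,q\,w(y_ty_s)=b\,q\,wy\in b\R wy$, while from $y_s=y_sv\,p\,c$ I obtain $y=y_ty_s=(y_ty_s)v\,p\,c=yv\,p\,c\in yv\R c$. With all three conditions of Definition~\ref{wbc} verified, uniqueness gives $(st)^{v,w}_{b,c}=t^{v,w}_{b,c}s^{v,w}_{b,c}$.

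I expect the only delicate point to be the bookkeeping in the absorption step: one must move $vtw$ and $vsw$ across $b$ and $c$ in the right order so that the two commuting hypotheses unlock precisely the inner-then-outer cancellations, and one must pair each ideal condition with the correct factor (outer $y_t$ for $b\R wy$, inner $y_s$ for $yv\R c$). Note that, in contrast with the annihilator version, no invertibility of $v$ or $w$ is required, because the ideal memberships are obtained constructively here rather than by cancelling a weight from a one-sided identity.
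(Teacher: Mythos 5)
Your proof is correct and takes essentially the same route as the paper's: the identical two absorption computations (slide $vtw$ past $b$ via $bvtw=vtwb$ and cancel through $y_svswb=b$ and $y_tvtwb=b$; dually for $c$ with $cvsw=vswc$), and the identical extraction of $y\in b\R wy\cap yv\R c$ from the membership of the outer factor $y_t$ in $b\R wy_t$ and of the inner factor $y_s$ in $y_sv\R c$, followed by uniqueness. The only point worth noting is that, exactly as in the paper's own proof, the identities actually verified are $yvswvtwb=b$ and $cvswvtwy=c$ (with the weights interleaved between $s$ and $t$), so your claim that these are "exactly" the defining equations for the inverse of $st$ inherits the same notational looseness already present in the paper.
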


\begin{proof}
Let $y=t^{v,w}_{b,c}s^{v,w}_{b,c}$. Then 
\begin{center}
    $yvswvtwb=t^{v,w}_{b,c}s^{v,w}_{b,c}vswvtwb$=$t^{v,w}_{b,c}s^{v,w}_{b,c}vswbvtw$=$t^{v,w}_{b,c}bvtw$=$t^{v,w}_{b,c}vtwb=b$.
\end{center}
Similalry,  
\begin{center}
    $cvswvtwy$=$cvswvtwt^{v,w}_{b,c}s^{v,w}_{b,c}=vswcvtwt^{v,w}_{b,c}s^{v,w}_{b,c}=vswcs^{v,w}_{b,c}=cvsws^{v,w}_{b,c}=c$.
\end{center}

Since $s^{v,w}_{b,c}\in b\R ws^{v,w}_{b,c}$ and $t^{v,w}_{b,c}\in b\R wt^{v,w}_{b,c}$, we have  \begin{center}
    $y=t^{v,w}_{b,c}s^{v,w}_{b,c}\in b\R wt^{v,w}_{b,c}s^{v,w}_{b,c}= b\R wy$.

\end{center}
Also $s^{v,w}_{b,c}\in s^{v,w}_{b,c}v\R c$ and $t^{v,w}_{b,c}\in t^{v,w}_{b,c}v\R c$ gives 
\begin{center}
    $y=t^{v,w}_{b,c}s^{v,w}_{b,c}\in t^{v,w}_{b,c}s^{v,w}_{b,c}v\R c=yv\R c$.
\end{center}
Hence $(st)^{v,w}_{b,c} =t^{v,w}_{b,c}s^{v,w}_{b,c}$.
\end{proof}

The following result present annihilator  $(v,w)$-weighted $(b,c)$ inverse in the relationships with hybrid $(v,w)$-weighted $(b,c)$ inverse and  $(v,w)$-weighted inverse of $a$ along $d \in \R$.

\begin{proposition}
Let $a,v,w,y,e\in \R$ with either $v$ or $w$ invertible and $e$ is regular. Then the following conditions are equivalent:
\begin{enumerate}[\rm(i)]
    \item  $y=a^{v,w}_{\parallel e}$.
    \item  $yvawe=e=evawy$, $\R wy\subseteq \R e$, and $lann(e)\subseteq lann(yv)$. 
    \item  $yvawy=y$, $\R wy\subseteq \R e$, and $lann(yv)=lann(e)$.
    \item  $yvawe=e=evawy$, $yv\R\subseteq e \R$, and $rann(e)\subseteq rann(wy)$.
    \item  $yvawy=y$, $yv\R\subseteq e \R$, and $rann(wy)=rann(e)$.
    \item  $y=a^{h,v,w}_{e,e}$.
    \item  $y=a^{v,w}_{e,e}$. 
    \item  $y=a^{a,v,w}_{e,e}$.
\end{enumerate}
\end{proposition}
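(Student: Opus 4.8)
The plan is to use the annihilator condition (viii) as a hub and route the remaining items through it, leaning on Theorem \ref{thm3.15} (applied with $d=e$) for the part that is already available. Items (i), (ii), (vi) and (vii) are verbatim conditions (i), (ii), (vi), (vii) of Theorem \ref{thm3.15} with $d$ replaced by $e$, so they are mutually equivalent and I may treat any of them as the backbone. Throughout I would exploit that $yvawy=y$ forces $y$, $yv$ and $wy$ to be regular, with explicit inner inverses $vaw$, $aw$ and $va$ (for instance $(yv)(aw)(yv)=yvawyv=yv$, and similarly $(wy)(va)(wy)=wy$); together with the standing regularity of $e$, Proposition \ref{prop14}(iii)--(iv) then converts annihilator equalities into the matching two-sided ideal equalities and back.

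First I would settle (viii)$\Leftrightarrow$backbone. Given (viii), the equality $lann(yv)=lann(e)$ together with the regularity of $yv$ and $e$ yields $yv\R=e\R$ by two applications of Proposition \ref{prop14}(iv), while $rann(wy)=rann(e)$ yields $\R wy=\R e$ by Proposition \ref{prop14}(iii); since $yvawy=y$, this is exactly the hybrid $(v,w)$-weighted $(e,e)$-inverse, i.e. (vi). Conversely, from (vi)/(vii) one reads off $yvawe=e$ and $evawy=e$ (via $1-yvaw\in lann(y)\subseteq lann(yv)=lann(e)$ and $1-vawy\in rann(y)\subseteq rann(wy)=rann(e)$), and Proposition \ref{prop14}(i)--(ii) return the annihilator equalities of (viii). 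The same bookkeeping gives (iv)$\Leftrightarrow$(viii), since (iv) already carries both equations $yvawe=e=evawy$: from $evawy=e$ one gets $rann(wy)\subseteq rann(e)$ (if $wyx=0$ then $ex=evawyx=0$) and from $yvawe=e$ one gets $lann(yv)\subseteq lann(e)$, the reverse inclusions and $yvawy=y$ coming from the ideal hypotheses and the invertibility of $v$ or $w$.

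It then remains to attach (iii) and (v). The directions (viii)$\Rightarrow$(iii) and (viii)$\Rightarrow$(v) are immediate, because (viii) supplies the full ideal equalities $yv\R=e\R$ and $\R wy=\R e$, which in particular give the one-sided inclusions in (iii) and (v). The substantive content, and the step I expect to be the main obstacle, is the reverse implication (iii)$\Rightarrow$(viii) (and its mirror (v)$\Rightarrow$(viii)): here I am handed only a one-sided ideal inclusion on one side ($\R wy\subseteq\R e$) together with an annihilator equality on the other side ($lann(yv)=lann(e)$), and I must recover the missing annihilator equality $rann(wy)=rann(e)$, equivalently promote $\R wy\subseteq\R e$ to $\R wy=\R e$, equivalently establish $evawy=e$. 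One inclusion is free, since $\R wy\subseteq\R e$ gives $rann(e)\subseteq rann(wy)$ by Proposition \ref{prop14}(i); the reverse inclusion $rann(wy)\subseteq rann(e)$ is the crux.

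To force this reverse inclusion I would first pass to ideal form: $lann(yv)=lann(e)$ with $yv,e$ regular gives $yv\R=e\R$, and since $y=yvawy\in yv\R$ this upgrades to $y\R=e\R$; combined with $\R wy=\R y\subseteq\R e$ (note $y=(yva)(wy)$, so $\R y=\R wy$) I am reduced to the single reverse inclusion $\R e\subseteq\R y$. This is precisely a rank-matching step: in the matrix model it is automatic, because $y\R=e\R$ forces $\mathrm{rank}(y)=\mathrm{rank}(e)$ and the invertibility of $v$ or $w$ ties $\mathrm{rank}(wy)$ or $\mathrm{rank}(yv)$ to $\mathrm{rank}(y)$, so that the chain $\mathrm{rank}(e)=\mathrm{rank}(yv)\le\mathrm{rank}(y)=\mathrm{rank}(wy)\le\mathrm{rank}(e)$ collapses and upgrades the inclusion to an equality. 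In the ring setting the rank count must be replaced by the idempotents generating these principal one-sided ideals---$yy^{-},\,ee^{-}$ on the right and $y^{-}y,\,e^{-}e$ on the left, coming from the inner inverses---compared by means of the relations $y=yvawy$ and $yvawe=e$; producing $\R e\subseteq\R y$ from this associated-idempotent comparison is the delicate heart of the argument, and it is exactly where the regularity of $e$ and the invertibility of $v$ or $w$ are indispensable. The implication (v)$\Rightarrow$(viii) is the left--right mirror image, and the link with (i) is then automatic from the backbone through Definition \ref{walongd} and Proposition \ref{prop14}(ii).
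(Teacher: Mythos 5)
Your backbone is sound and coincides with the paper where the paper is explicit: (i), (ii), (vi), (vii) are indeed verbatim instances of Theorem \ref{thm3.15} with $d=e$; your route (viii)$\Leftrightarrow$(vi)/(vii) via Proposition \ref{prop14} plus the regularity of $e$, $yv$ and $wy$ is in substance the paper's own argument (the paper phrases it through Theorem \ref{thm3.4}(v)); and your repair of (iv) --- extracting $rann(wy)\subseteq rann(e)$ from $evawy=e$ --- is correct. The problem is the step you yourself flag as ``the delicate heart'': (iii)$\Rightarrow$(viii) and its mirror (v)$\Rightarrow$(viii). You reduce it, correctly, to producing $\R e\subseteq \R y$ from $y\R=e\R$ together with $\R y\subseteq\R e$, but you never carry out the promised ``associated-idempotent comparison''; the proposal stops at a description of what would have to be done, so it is incomplete at exactly the one place where something beyond Theorem \ref{thm3.15} is required.

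Moreover, that gap cannot be filled: the implication (iii)$\Rightarrow$(viii) is false in a general ring, so no idempotent bookkeeping will rescue it. Take $\R=\mathrm{End}(V)$ for a vector space $V$ with countable basis $x_1,x_2,\dots$, let $v=w=e=1$ (so $e$ is regular and $v,w$ are invertible), let $a$ be the right shift ($ax_n=x_{n+1}$) and $y$ the left shift ($yx_1=0$, $yx_{n+1}=x_n$). Then $ya=1$, hence $yvawy=yay=y$; $\R wy=\R y\subseteq\R=\R e$ trivially; and $lann(yv)=lann(y)=0=lann(e)$, since $y$ is surjective. So (iii) holds. But $rann(wy)=rann(y)$ contains the nonzero projection onto $\mathrm{span}(x_1)$ while $rann(e)=0$, so (viii) fails, and so do (i), (ii), (iv), (v), (vi), (vii), because $evawy=ay\neq 1$. (Swapping the two shifts gives the analogous counterexample for (v).) Your rank-matching remark is precisely the point: the equivalence does hold in matrix rings by a dimension count, but that count has no substitute in a general ring. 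Note finally that the paper's own proof has the same defect, only silently: it proves (i)$\Leftrightarrow$(vii)$\Leftrightarrow$(viii), with (ii) and (vi) covered verbatim by Theorem \ref{thm3.15}, and never addresses (iii) or (v), whose one-sided inclusions make them strictly weaker than the other conditions. So the statement itself is flawed as written; (iii) and (v) become correct only if the inclusions $\R wy\subseteq\R e$, resp.\ $yv\R\subseteq e\R$, are strengthened to the equalities of Theorem \ref{thm3.15} (or an extra hypothesis such as $rann(wy)\subseteq rann(e)$, resp.\ $lann(yv)\subseteq lann(e)$, is added).
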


\begin{proof}
The equivalence of (i)$\Leftrightarrow$(vii) follows from Theorem \ref{thm3.15}. Next we will show that (vii)$\Leftrightarrow$(viii). Since $a^{v,w}_{e,e}$ is a special case of $a^{a,v,w}_{e,e}$, it is enough show (viii)$\Rightarrow$(vii). Let $y=a^{a,v,w}_{e,e}$. Then 
\begin{center}
     $yvawy=y$, $rann(wy)=rann(e)$, and  $lann(yv)=lann(e)$.
\end{center}
Clearly both $e$ and $y$ are regular. So by Proposition \ref{prop14}, $lann(yv)=lann(e)$ gives $yv\R \subseteq e\R$ and $rann(wy)=rann(e)$ gives $\R wy=\R e$. Hence $y=a^{v,w}_{e,e}$.
\end{proof}

\begin{lemma}
For $i=1,2$, let $a_{i},b_{i},c_{i},v,w,y_{i}\in \R $ with $v,w$ both invertible and  $y_{i}={a_{i}}_{b_i,c_i}^{a,v,w}$. If $b_{1}=b_{2}$ then $y_{1}va_{1}wy_{2}=y_{2}$ and $y_{2}va_{2}wy_{1}=y_{1}$. Mutually If $c_{1}=c_{2}$ then $y_{1}va_{2}wy_{2}=y_{1}$ and $y_{2}va_{1}wy_{1}=y_{2}$.
\end{lemma}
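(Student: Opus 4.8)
The plan is to reduce each of the four identities to one of the two ``absorption'' equations supplied by the equivalent description of the annihilator inverse in Theorem \ref{thm4.1}, and then to exploit the fact that here \emph{both} $v$ and $w$ are invertible so that they can be cancelled. Recall from Definition \ref{defann} that $y_i = {a_i}_{b_i,c_i}^{a,v,w}$ means $y_i v a_i w y_i = y_i$, $lann(y_i v) = lann(b_i)$ and $rann(w y_i) = rann(c_i)$; moreover Theorem \ref{thm4.1} supplies the companion relations $y_i v a_i w b_i = b_i$ and $c_i v a_i w y_i = c_i$, which are the workhorses below.

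First I would treat the case $b_1 = b_2$. From $y_1 v a_1 w b_1 = b_1$ I read off $(1 - y_1 v a_1 w) b_1 = 0$, so $1 - y_1 v a_1 w \in lann(b_1)$. Since $b_1 = b_2$ and $lann(b_2) = lann(y_2 v)$, this places $1 - y_1 v a_1 w$ in $lann(y_2 v)$, whence $(1 - y_1 v a_1 w) y_2 v = 0$, i.e. $y_1 v a_1 w y_2 v = y_2 v$. Cancelling the invertible $v$ on the right yields $y_1 v a_1 w y_2 = y_2$. The companion identity $y_2 v a_2 w y_1 = y_1$ follows by interchanging the indices: $1 - y_2 v a_2 w \in lann(b_2) = lann(b_1) = lann(y_1 v)$, so $(1 - y_2 v a_2 w) y_1 v = 0$, and again $v$ cancels on the right.

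The case $c_1 = c_2$ is dual, using the right-hand absorption equation and cancelling $w$ on the left in place of $v$ on the right. From $c_2 v a_2 w y_2 = c_2$ I get $c_2 (1 - v a_2 w y_2) = 0$, so $1 - v a_2 w y_2 \in rann(c_2) = rann(c_1) = rann(w y_1)$; multiplying $w y_1$ through gives $w y_1 v a_2 w y_2 = w y_1$, and cancelling the invertible $w$ on the left produces $y_1 v a_2 w y_2 = y_1$. Symmetrically, $c_1 v a_1 w y_1 = c_1$ puts $1 - v a_1 w y_1$ in $rann(c_1) = rann(c_2) = rann(w y_2)$, giving $w y_2 v a_1 w y_1 = w y_2$ and hence $y_2 v a_1 w y_1 = y_2$.

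There is no serious obstacle here; the only point requiring care is the bookkeeping — matching each membership to the correct one-sided annihilator and then to the right factor ($y_2 v$ on the \emph{right} in the left-annihilator arguments, $w y_1$ on the \emph{left} in the right-annihilator arguments). The hypothesis that both $v$ and $w$ are invertible is precisely what legitimizes the final cancellations, and it is genuinely needed: with only one of $v,w$ invertible (the weaker hypothesis used in several earlier results) these cross-absorption identities need not be available.
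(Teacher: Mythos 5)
Your proof is correct and follows essentially the same route as the paper's: both derive $y_1va_1wb_1=b_1$ (resp.\ $c_iva_iwy_i=c_i$) from Theorem \ref{thm4.1}, push $1-y_1va_1w$ into $lann(b_1)=lann(b_2)\subseteq lann(y_2v)$ (resp.\ the dual membership into the right annihilators), and cancel the invertible $v$ on the right (resp.\ $w$ on the left). The only difference is cosmetic: the paper writes out the $b_1=b_2$ case and dismisses the $c_1=c_2$ case as ``similar,'' whereas you spell out both.
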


\begin{proof}
Let $y_{i}={a_{i}}_{b_i,c_i}^{a,v,w}$. Then by Theorem \ref{thm4.1},  $y_{1}va_{1}wb_{1}=b_{1}$ and consequently,
\begin{center}
  $(y_{1}va_{1}w-1)\in lann(b_{1})=lann(b_{2})\subseteq lann(y_{2}v)$.  
\end{center}
 Thus $y_{1}va_{1}wy_{2}v=y_{2}v$. Post-multiplying by $v^{-1}$ we get $y_{1}va_{1}wy_{2}=y_{2}$.  From $y_{2}va_{2}wb_{2}=b_{2}$, we have 
 \begin{center}
   $(y_{2}va_{2}w-1)\in lann(b_{2})=lann(b_{1})\subseteq lann(y_{1}v)$.   
 \end{center}
 Therefore, $y_{2}va_{2}wy_{1}v=y_{1}v$. Again post-multiplying by $v^{-1}$, we obtain $y_{2}va_{2}wy_{1}=y_{1}$. In the similar manner, we can show that if $c_{1}=c_{2}$, then $y_{1}va_{2}wy_{2}=y_{1}$ and $y_{2}va_{1}wy_{1}=y_{2}$.
\end{proof}

\begin{theorem}
Let $a_{1},a_{2},v,w,b,c\in \R $ with both $v$ and $w$ invertible. Suppose $y_{1}={a_{1}}_{b,c}^{a,v,w}$ and $y_{2}={a_{2}}_{b,c}^{a,v,w}$. Then $y_{1}+y_{2}=y_{1}v(a_{1}+a_{2})wy_{2}=y_{2}v(a_{1}+a_{2})wy_{1}$.
\end{theorem}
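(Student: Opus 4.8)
The plan is to reduce the entire statement to the four multiplicative identities furnished by the Lemma immediately preceding this theorem. The crucial observation is that here the two inverses $y_1$ and $y_2$ are built from the \emph{same} pair $b,c$, so in the notation of that Lemma we have simultaneously $b_1=b_2=b$ and $c_1=c_2=c$. Thus both halves of that Lemma apply at once, and since $v,w$ are both invertible (matching its hypotheses), I would begin by recording all four resulting equalities:
\begin{equation*}
y_{1}va_{1}wy_{2}=y_{2},\qquad y_{2}va_{2}wy_{1}=y_{1},\qquad y_{1}va_{2}wy_{2}=y_{1},\qquad y_{2}va_{1}wy_{1}=y_{2}.
\end{equation*}
The first two come from $b_1=b_2$ and the last two from $c_1=c_2$.

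Next I would exploit the bilinearity of the expression $y_iv(\cdot)wy_j$ in its middle slot. Distributing the sum $a_1+a_2$ and substituting the identities above gives
\begin{equation*}
y_{1}v(a_{1}+a_{2})wy_{2}=y_{1}va_{1}wy_{2}+y_{1}va_{2}wy_{2}=y_{2}+y_{1},
\end{equation*}
and symmetrically
\begin{equation*}
y_{2}v(a_{1}+a_{2})wy_{1}=y_{2}va_{1}wy_{1}+y_{2}va_{2}wy_{1}=y_{2}+y_{1}.
\end{equation*}
Both right-hand sides collapse to $y_{1}+y_{2}$, which yields the full chain $y_{1}+y_{2}=y_{1}v(a_{1}+a_{2})wy_{2}=y_{2}v(a_{1}+a_{2})wy_{1}$ asserted in the statement.

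There is no genuinely hard step: the entire content has already been packaged into the preceding Lemma, and the remaining work is a one-line distribution. The only point requiring care is the bookkeeping observation that a \emph{single} application of that Lemma supplies \emph{both} $b$-identities and $c$-identities, precisely because $y_1$ and $y_2$ share the common $b$ and the common $c$; this is what lets the two cross terms $y_1va_2wy_2$ and $y_2va_1wy_1$ be evaluated. I would therefore present the theorem essentially as an immediate corollary of the Lemma, spending at most one sentence to confirm that both hypotheses $b_1=b_2$ and $c_1=c_2$ hold here.
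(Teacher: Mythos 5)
Your proposal is correct and takes essentially the same approach as the paper: both proofs rest on the four identities $y_{1}va_{1}wy_{2}=y_{2}$, $y_{2}va_{2}wy_{1}=y_{1}$, $y_{1}va_{2}wy_{2}=y_{1}$, $y_{2}va_{1}wy_{1}=y_{2}$, followed by distributing $a_{1}+a_{2}$. The only difference is bookkeeping: you invoke the preceding Lemma directly (legitimately, since $b_{1}=b_{2}=b$ and $c_{1}=c_{2}=c$ hold simultaneously), whereas the paper re-derives those same four identities inline from its annihilator characterization (Theorem 4.1) via the inclusions $lann(b)\subseteq lann(y_{i}v)$ and $rann(c)\subseteq rann(wy_{i})$, which is precisely the Lemma's own proof repeated.
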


\begin{proof}
$y_{1}={a_{1}}_{b,c}^{a,v,w}$ and $y_{2}={a_{2}}_{b,c}^{a,v,w}$. Then by Theorem \ref{thm4.1}, we have $y_{1}va_{1}wb=b$ and $y_{2}va_{2}wb=b$. Thus $(y_{1}va_{1}w-1)=lann(b)\subseteq lann(y_{2}v)$ yields $y_{1}va_{1}wy_{2}v=y_{2}v$. Post-multiplying by $v^{-1}$, we get $y_{1}va_{1}wy_{2}=y_{2}$. Similarly we can show  $y_{2}va_{2}wy_{1}=y_{1}$. Again using Theorem  \ref{thm4.1}, we have $cva_{1}wy_{1}=c$. Which implies 
\begin{center}
    $(va_{1}wy_{1}-1)\in rann(c)\subseteq rann(wy_{2})$.
\end{center}
Thus $y_{2}va_{1}wy_{1}=wy_{2}$. Pre-multiplying by $w^{-1}$, we get  $y_{2}va_{1}wy_{1}=y_{2}$. Similarly, we can show $y_{1}va_{2}wy_{2}=y_{1}$. Now
\begin{center}
   $y_{1}v(a_{1}+a_{2})wy_{2}=y_{1}va_{1}wy_{2}+y_{1}va_{2}wy_{2}=y_{2}+y_{1}$, and 
\end{center}
\begin{center}
   $y_{2}v(a_{1}+a_{2})wy_{1}=y_{2}va_{1}wy_{1}+y_{2}va_{2}wy_{1}=y_{2}+y_{1}$.  
\end{center}
\end{proof}

\section{Conclusion}
We have discussed a few necessary and sufficient conditions for the existence of the $(v, w)$-weighted $(b,c)$ inverses of elements in rings. Derived representations are used in generating corresponding representations of the  $(v,w)$-weighted hybrid $(b,c)$-inverse and annihilator $(v, w)$-weighted $(b, c)$-inverses. We have also explored a few new  results related to the reverse order law for  annihilator $(v, w)$-weighted $(b, c)$-inverses. In addition to this, the notion of $(v,w)$-weighted Bott-Duffin $(e,f)$-inverse introduced and a few characterisations of this inverse are studied.\\

\noindent{\bf{Acknowledgments}}\\
Ratikanta Behera is grateful to the Mohapatra Family Foundation and the College of Graduate Studies, University of Central Florida, Orlando, FL, USA, for their financial support for this research.

\section*{Disclosure statement}

No potential conflict of interest was reported by the authors.

\section*{Funding}

The Mohapatra Family Foundation and the College of Graduate Studies, University of Central Florida, Orlando, FL, USA,

\bibliographystyle{abbrv}
\bibliography{reference}
\end{document}